\newcommand*\rel@kern[1]{\kern#1\dimexpr\macc@kerna}
\newcommand*\widebar[1]{%
  \begingroup
  \def\mathaccent##1##2{%
    \rel@kern{0.8}%
    \overline{\rel@kern{-0.8}\macc@nucleus\rel@kern{0.2}}%
    \rel@kern{-0.2}%
  }%
  \macc@depth\@ne
  \let\math@bgroup\@empty \let\math@egroup\macc@set@skewchar
  \mathsurround\z@ \frozen@everymath{\mathgroup\macc@group\relax}%
  \macc@set@skewchar\relax
  \let\mathaccentV\macc@nested@a
  \macc@nested@a\relax111{#1}%
  \endgroup
}
\theoremstyle{theorem}
\newtheorem{theorem}{Theorem}
\renewcommand{\hat}[1]{\widehat{#1}} 
\newcommand{\pd}[2]{\frac{\partial#1}{\partial#2}}
\newcommand{\LRp}[1]{\left( #1 \right)} 
\newcommand{\LRb}[1]{\left| #1 \right|} 
\newcommand{\LRc}[1]{\left\{ #1 \right\}} 
\newcommand{\jump}[1] {\ensuremath{\left\llbracket#1\right\rrbracket}} 
\newcommand{\avg}[1] {\ensuremath{\LRc{\!\!\LRc{#1}\!\!}}}
\DeclareMathOperator*{\argmin}{arg\,min}
\newcommand{\rmd}{{\rm{d}}}
\newcommand{\rmt}{{\rm{t}}}
\newcommand{\p}{\partial}
\newcommand{\f}{\frac}
\newcommand{\sumid}{\sum_{i=1}^d}
\newcommand{\sumjd}{\sum_{j=1}^d}
\newcommand{\csp}{,\qquad}
\newcommand{\eps}{\epsilon}
\newcommand{\Go}{\Omega}
\newcommand{\Gd}{\Delta}
\newcommand{\R}{\mathbb{R}}
\newcommand{\N}{\mathbb{N}}
\newtheorem{remark}{Remark}
\begin{document}

\title{Entropy stable reduced order modeling of nonlinear conservation laws using discontinuous Galerkin methods} 
\author[1]{Ray Qu}
\author[2]{Akil Narayan}
\author[1]{Jesse Chan}

\affiliation[1]{Department of Computational Applied Mathematics and Operations Research, Rice University, Houston, TX, 77005}
\affiliation[2]{Department of Mathematics, The University of Utah, Salt Lake City, UT 84112}

\begin{abstract}
  Reduced order models (ROMs) are inexpensive surrogate models that reduce costs associated with many-query scenarios. Current methods for constructing entropy stable ROMs for nonlinear conservation laws utilize full order models (FOMs) based on finite volume methods (FVMs). This work describes how to generalize the construction of entropy stable ROMs from FVM FOMs to high order discontinuous Galerkin (DG) FOMs. Significant innovations of our work include the introduction of a new ``test basis'' which significantly improves accuracy for DG FOMs, a dimension-by-dimension hyper-reduction strategy, and a simplification of the boundary hyper-reduction step based on ``Carathéodory pruning''. 
\end{abstract}

\maketitle

\section{Introduction}
Entropy-stable numerical methods have emerged as robust and versatile approaches that not only preserve conservation properties but also guarantee that physically meaningful solutions satisfy an entropy inequality. These methods originated with Tadmor, who introduced a discretely entropy stable finite volume method (FVM) based on entropy-conservative numerical fluxes \cite{Tadmor87}. This method was later extended to higher orders by LeFloch, Mercier, and Rohde using high-order flux functions \cite{LeFloch2002}, and by Fjordholm, Mishra, and Tadmor via ENO reconstructions \cite{Fjordholm12}. More recently, entropy-stable discretizations have been generalized to high-order DG methods by combining entropy-conservative fluxes with summation by parts (SBP) operators \cite{Fisher13, Carpenter14, Gassner16, Chen17, Chan18}. 

\par However, while entropy stable numerical methods significantly improve robustness, their high computational cost still makes them infeasible for many-query settings in e.g., uncertainty quantification. Reduced order modeling aims to construct an efficient surrogate model to reduce the cost of many-query settings. However, constructing a reduced order model (ROM) which retains nonlinear entropy stability is non-trivial. Barone et al.\ combined Galerkin projection and proper orthogonal decomposition (POD) to construct energy stable ROMs for linearized compressible flows \cite{Barone09}. The same authors also discussed stability and convergence with respect to different boundary treatments \cite{Kalashnikova10}. More recently, Chan constructed entropy stable ROMs for nonlinear conservation laws from a finite volume full order model (FOM) \cite{Chan20ROM}. Ofstie and Nadarajah have also independently developed an entropy stable ROM formulation based on high order DG methods in \cite{ofstie2025}, though they do not treat hyper-reduction.


\par This work is interested in extending the work of \cite{Chan20ROM} to high order discontinuous Galerkin (DG) methods. The construction of ROMs based on DG methods has been an active area of research recently. For example, Du and Yano investigate the benefits of adaptive DG meshes after hyper-reduction, demonstrating substantial reductions in computational costs while preserving the accuracy of the models \cite{Du21}. Yu and Hesthaven adopt the discrete empirical interpolation method (DEIM) to hyper-reduce upwinding dissipation, enhancing both the accuracy and stability of the computations for shock-dominated flows \cite{Yu22}. 

In this work, we propose a way to construct entropy stable ROMs using high order DG methods, generalizing previous entropy stable ROM work using FVMs \cite{Chan20ROM}. The main contribution of this paper is the generalization of the ``test basis'' introduced in \cite{Chan20ROM} to construct entropy stable ROM discretizations. The new test basis introduced in this work significantly improves accuracy for ROMs constructed from DG FOMs. Additionally, we introduce a simpler technique for boundary hyper-reduction step based on ``Carathéodory pruning'' \cite{vandenBos20}. 

The paper proceeds as follows: Section~\ref{sec:nonlin_cons_laws} introduces entropy stability theory for nonlinear conservation laws. Section~\ref{sec:1Ddomains} describes the construction of DG FOMs and ROMs, including a more generalized hyper-reduction procedure for DG ROMs. Section~\ref{sec:HDdomains} extends this construction to higher-dimensional domains, and introduces a boundary hyper-reduction step using Carathéodory pruning. Section~\ref{sec:viscosity} discusses the FOM and ROM discretizations of artificial viscosity, and  Section~\ref{sec:NumExp} presents numerical experiments. Section~\ref{sec:conclusion} concludes the paper and discusses some directions for future work.

\section{Nonlinear conservation laws}\label{sec:nonlin_cons_laws}
\par This work focuses multidimensional systems of nonlinear conservation laws; these are represented by a system of PDEs in $d$ dimensions:
\begin{equation}
    \f{\p \bm{u}}{\p \rmt} + \sum_{i=1}^{d} \f{\p \bm{f}^i(\bm{u})}{\p \bm{x}^i} = \mathbf{0}, 
    \label{eq:NCL}
\end{equation}
where $\bm{u}\in\mathbb{R}^n$ are the conservative variables and $\bm{f}^i$ are nonlinear flux functions. We also assume that this system has an entropy-entropy flux pair, with the primary player being the entropy function $S : \R^n \rightarrow \R$, which is a convex function and hence whose Jacobian is invertible \cite{Tadmor03, Chen17}. This motivates introduction of entropy variables $\bm{v}$ through the bijective entropy function Jacobian,
\begin{align}\label{eq:S-def}
  S(\bm{u}) \;\;&\text{convex}, & \bm{v}(\bm{u}) &\coloneqq \f{\partial S}{\partial \bm{u}} (\bm{u}), & \bm{u}(\bm{v}) &\coloneqq \left(\f{\partial S}{\partial \bm{u}}\right)^{-1} (\bm{v}),
\end{align}
where $\left(\f{\partial S}{\partial \bm{u}}\right)^{-1}$ denotes the functional inverse.
Often $S$ also serves as an energy of the system.

We will also make use of the entropy potential function $\bm{\psi}(\bm{u}) \in \R^d$. With $\bm{f} = [\bm{f}^1,\, \ldots,\, \bm{f}^d] \in \R^{n \times d}$, the entropy and entropy potential functions together prescribe the entropy flux function $\bm{v}^T \bm{f} - \bm{\psi}^T(\bm{u})$, which characterizes the flow of entropy.

\par By left-multiplying the entropy variables to \eqref{eq:NCL} and integrating by parts, one can obtain an entropy conservation law of the system on the continuous level satisfying
\begin{equation}
   \int_\Go \f{\p S(\bm{u})}{\p \rmt} + \sumid\int_{\p\Go}\LRp{\bm{v}^T\bm{f}^i(\bm{u})-\psi^i(\bm{u})}\bm{n}^i = 0,
    \label{eq:NCL_ES}
\end{equation}
where $\psi^i(\bm{u})$ and $\bm{n}^i$ are the entropy potential and outward normal vector for the $i$th coordinate, respectively. The second term above is an entropy flux, measuring boundary effects on the total change in entropy inside the domain $\Omega$. While numerical schemes are often derived from the PDE statement \eqref{eq:NCL}, but ensuring that these schemes are also discretized versions of the entropy conservation condition \eqref{eq:NCL_ES} provides entropy (or energy) stability guarantees.

\section{Construction of entropy conservative DG ROMs on 1D domains}
\label{sec:1Ddomains}

We start by providing details for systems over 1D domains $(d=1)$ with periodic boundary conditions and then extend the construction of DG ROMs to 1D domains with weakly-imposed boundary conditions. In particular, we now write $\bm{f} \equiv \bm{f}^1$ for our single $n$-component flux function, and $\psi \equiv \psi^1$ for our single entropy potential function.
\subsection{Notation and summation by parts operators}
\label{sec:DG_notation}
In DG methods, the domain $\Go$ is partitioned into $K$ non-overlapping elements $\{D^k\}_{k=1}^K$. On each element, we use the ansatz that the solution lies within a space of polynomials of degree $p\in \N_0$; it is computationally convenient to equivalently encode the solution through its value at $N_p$ interpolation nodes on each element.
To enforce the PDE, we use collocated interpolation and quadrature nodes for the DG FOM \cite{Gassner16, Chen17, Gassner13, Chen20} with the Gauss-Lobatto quadrature rule, which induces a summation by parts (SBP) structure in the resulting matrix operators \cite{Chan19Colloc}. The SBP property is the discrete analogue of an integration by parts property, and is a useful framework for ensuring energy stability. 

Let $\bm{u}_{i,k} \in \R^n$ represent the conservative variable values at the $i$-th node in the $k$-th cell, and define $\bm{u}_k=[\bm{u}^T_{1,k},...,\bm{u}^T_{N_p,k}]^T$ as a size-$N_p$ vector whose entries are size-$n$ vectors. This ``vector-of-vectors'' notation allows us to write schemes more compactly in what follows. We further denote $\bm{M},\bm{Q}\in\mathbb{R}^{N_p\times N_p}$ as the element-local diagonal mass and nodal differential matrices, such that $\bm{M}^{-1}\bm{Q}$ approximates the first derivative on a reference interval. In this setup, some algebraic operations on size-$N_p$ objects can be applied to our vector-of-vectors setup. For example, $\bm{M} \bm{u}_k$ is another vector of vectors, i.e., a size-$N_p$ vector whose individual entries are size-$n$ vectors, and the $j$th entry of $\bm{M} \bm{u}_k$ is the size-$n$ vector,
\begin{align*}
  \left(\bm{M} \bm{u}_k\right)_j = \sum_{q=1}^{N_p} (M)_{j,q} \bm{u}_{q,k}^T \in \R^n.
\end{align*}
We will make considerable use of such algebraic operations on $\R^n$-valued objects.

We note for later that the operator $\bm{Q}$ satisfies an SBP property if 
\begin{equation}
  \bm{Q} + \bm{Q}^T = \bm{B} \coloneqq 
  \begin{bmatrix}
        -1 & & & &\\
        & 0 & & &\\
        & & \ddots & &\\
        & & & 0 & \\
        & & & & 1
    \end{bmatrix}. 
    \label{eq:DG_SBP}
\end{equation}

\subsection{Entropy conservative fluxes}

\par Because a DG ansatz presumes no continuity of the solution at element interfaces, numerical flux functions are introduced at element interfaces to ensure accurate propagation of information across elements. Entropy conservative numerical schemes are typically constructed based on special numerical fluxes which satisfy an entropy conservation condition. Let $\bm{u}_L$, $\bm{u}_R$ denote left and right solution states (i.e., the values of the solution at an interface from the element on the left and right). Then the two-point flux $\bm{f}_{EC}(\bm{u}_L,\bm{u}_R)$ is entropy-conservative \cite{Tadmor87} if it satisfies the following properties:
\begin{align}\nonumber
  \bm{f}_{EC}(\bm{u}, \bm{u}) &= \bm{f}(\bm{u}) && \text{(consistency)}  \\\label{eq:fluxCondition}
  \bm{f}_{EC}(\bm{u}_L,\bm{u}_R) &= \bm{f}_{EC}(\bm{u}_R,\bm{u}_L) && \text{(symmetry)} \\\nonumber
  (\bm{v}_L-\bm{v}_R)^T\bm{f}_{EC}(\bm{u}_L,\bm{u}_R) &= \bm{\psi}(\bm{u}_L)-\bm{\psi}(\bm{u}_R). && \text{(entropy conservation)}
\end{align}
The choice of $\bm{f}_{EC}(\bm{u}_R,\bm{u}_L)$ is not unique since \eqref{eq:fluxCondition} is underdetermined, and in general can be derived through path integration \cite{Tadmor03}. However, for many systems, explicit formulas for entropy conservative fluxes have been derived, which are generally more efficient to evaluate than path integrals \cite{chandrashekar2013kinetic, ranocha2018comparison, winters2019entropy}.

\subsection{Local formulation}
In 1D, each element $D^k$ is an interval whose size is denoted as $h_k$. Then, $J_k=h_k/2$ is the Jacobian of the mapping from the reference element/interval $[-1,1]$ to the physical element $D^k$. Let $\bm{F}_k$ denote a square size-$N_p$ matrix of conservative flux values,
\begin{align*}
  \bm{F}_k &= \begin{bmatrix}
          \bm{F}_k^1 & & \\
          & \ddots & \\ 
          & & \bm{F}_k^n 
  \end{bmatrix},  & 
    \left(\bm{F}_k\right)_{i,j} &= \bm{f}_{EC}\left(\bm{u}_{i,k}, \bm{u}_{j,k}\right), 
\end{align*}
The individual entries of the size-$N_p$ matrix $\bm{F}_k$ are size-$n$ vectors. 
A local entropy stable DG formulation on $D^k$ is then formed through a collocation approximation of a weak form of \eqref{eq:NCL} that results in a boundary term the couples values at neighboring elements. The scheme takes the form, 
\begin{equation}
\begin{gathered}
J_k\bm{M}\frac{\rmd\bm{u}_k}{\rmd\rmt} + 2\LRp{\bm{Q}\circ\bm{F}_k}\bm{1} + \bm{B}(\bm{f}^*-\bm{f}(\bm{u}_k))= \bm{0}, \\
\bm{f}^* = \begin{bmatrix}
  \bm{f}_{EC}(\bm{u}_{1,k}^+,\bm{u}_{1,k})^T & \bm{0}^T & \hdots & \bm{0}^T  &\bm{f}_{EC}(\bm{u}_{N_p,k}^+,\bm{u}_{N_p,k})^T
\end{bmatrix}^T, \\
  \bm{f}(\bm{u}_k) = \begin{bmatrix} \bm{f}(\bm{u}_{1,k})^T & \ldots & \bm{f}(\bm{u}_{N_p, k})^T \end{bmatrix}^T,
\end{gathered}
    \label{eq:DG_1D_FOM}
\end{equation}
where $\bm{u}_{1,k}^+$, $\bm{u}_{N_p,k}^+$ denote the exterior values of $\bm{u}_{1,k}$, $\bm{u}_{N_p,k}$ on neighboring elements. More specifically, when elements are ordered from left to right in ascending order, for interior element indices $1<k<K$, these values are defined as follows: $$\bm{u}_{1,k}^+ = \bm{u}_{N_p,k-1}\csp \bm{u}_{N_p,k}^+ = \bm{u}_{1,k+1}.$$ The values of $\bm{u}^+$ at boundary nodes ($k = 1, K$) are determined by the imposed boundary conditions, which we will discuss later. In \eqref{eq:DG_1D_FOM}, we have again exercised linear algebra constructions where each element in $\bm{F}$, $\bm{u}_k$, $\bm{f}^\ast$, and $\bm{f}(\bm{u}_k)$ is a size-$n$ vector.

Using the SBP property on $\bm{Q}$ and the consistency property $\bm{f}_{EC}(\bm{u},\bm{u}) = \bm{u}$ on fluxes, we obtain the skew-symmetric form of \eqref{eq:DG_1D_FOM}
\begin{equation}
\label{eq:DG_1D_FOM2}
    J_k\bm{M}\frac{\rmd\bm{u}_k}{\rmd\rmt} + \LRp{\LRp{\bm{Q}-\bm{Q}^T}\circ\bm{F}_k}\bm{1} + \bm{B}\bm{f}^*=\bm{0}.
\end{equation}
This semi-discrete form can be fully discretized through a time integration method.

\subsection{1D periodic domains}
\subsubsection{Full order model}
We review the global DG FOM construction for 1D periodic domains in \cite{Chan22SBP} since ROMs are easier to construct from global solution vectors. We start with the definition of a global solution vector-of-vectors $\bm{u}_\Go=[\bm{u}^T_1,\bm{u}^T_2,...,\bm{u}^T_K]^T \in (\R^n)^{K N_p}$ where we have concatenated element-local solution vectors together.
Then we define a global flux matrix as the $K \times K$ block matrix,
\begin{align}
  \bm{F} &= 
    \begin{bmatrix}
        \bm{F}_{1,1} & \hdots & \bm{F}_{1,K} \\
        \vdots & \ddots & \vdots \\
        \bm{F}_{K,1}  & \hdots & \bm{F}_{K,K} 
    \end{bmatrix},  &
    \LRp{\bm{F}_{k_1,k_2}}_{ij} &= \bm{f}_{EC}\LRp{\bm{u}_{i,k_1},\bm{u}_{j,k_2}},
    \label{eq:F-block}
\end{align}
where the blocks of the matrix $\bm{F}$ capture flux interactions between solution values across elements. For periodic boundary conditions on 1D domains, we set $$\bm{u}_{1,1}^+ = \bm{u}_{N_p,K}\csp \bm{u}_{N_p,K}^+ = \bm{u}_{1,1}.$$
We can now concatenate all local formulations into a single skew-symmetric global formulation involving the global matrix $\bm{Q}_\Go$ \cite{Chan22SBP}
\begin{equation}
    \bm{M}_\Go\f{\rmd\bm{u}_\Go}{\rmd\rmt} + 2\LRp{\bm{Q}_\Go\circ\bm{F}}\bm{1} = \bm{0},
    \label{eq:DG_1D_FOM_PBC}
\end{equation}
where $\bm{M}_\Go$ is a block-diagonal matrix with blocks $J_k\bm{M}$, and $\bm{Q}_\Omega$ is a sparse block-Toeplitz matrix,
\begin{equation}
\label{eq:DG_1D_Qglobal}
    \bm{Q}_\Go = \f{1}{2}\begin{bmatrix}
        \bm{S} & \bm{B}_R & & -\bm{B}_L \\
        -\bm{B}_L & \bm{S} & \bm{B}_R & \\
        & -\bm{B}_L & \ddots & \bm{B}_R \\
        \bm{B}_R & & -\bm{B}_L & \bm{S} 
    \end{bmatrix}
    \csp \bm{S} = \LRp{\bm{Q} - \bm{Q}^T},
\end{equation}
where the matrices $\bm{B}_L, \bm{B}_R \in \R^{n N_p \times n N_p}$ are formed from blocks that are all zeros except for a single entry
\begin{align}\label{eq:BLBRT}
  \bm{B}_L &= \begin{bmatrix}
        & & 1\\
        & \udots & \\
        0 & & 
    \end{bmatrix}, & 
    \bm{B}_R &= \bm{B}_L^T = \begin{bmatrix}
        & & 0\\
        & \udots & \\
        1 & & 
    \end{bmatrix}. 
\end{align}
We can now show some useful properties of the quantities we've introduced.
\begin{theorem}
\label{thm:DG_1D_Qglobal}
Let $\bm{Q}_\Go$ be given by \eqref{eq:DG_1D_Qglobal}. $\bm{Q}_\Go$ is skew-symmetric and has zero row sums, i.e. $$\bm{Q}_\Go + \bm{Q}_\Go^T = \bm{0}\csp\bm{Q}_\Go\bm{1} = \bm{0}.$$
\end{theorem}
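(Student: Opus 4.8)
The plan is to reduce both assertions to three elementary facts about the building-block matrices and then leverage the block-Toeplitz structure of $\bm{Q}_\Go$. The facts I would record first are: (i) $\bm{S} = \bm{Q} - \bm{Q}^T$ is skew-symmetric, since $\bm{S}^T = \bm{Q}^T - \bm{Q} = -\bm{S}$; (ii) $\bm{B}_R = \bm{B}_L^T$, directly from \eqref{eq:BLBRT}, so that transposition swaps $\bm{B}_L$ and $\bm{B}_R$; and (iii) $\bm{Q}\bm{1} = \bm{0}$. The last fact follows because $\bm{Q} = \bm{M}\bm{D}$ for the nodal differentiation matrix $\bm{D} = \bm{M}^{-1}\bm{Q}$, and differentiation annihilates constants ($\bm{D}\bm{1} = \bm{0}$); equivalently, this is the standard zero-row-sum property of SBP derivative operators.

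For skew-symmetry I would compute the block transpose entrywise, using $(\bm{Q}_\Go^T)_{i,j} = (\bm{Q}_\Go)_{j,i}^T$. On the diagonal, fact (i) gives $\tfrac12\bm{S}^T = -\tfrac12\bm{S}$. On the super-diagonal, position $(i,i+1)$ of $\bm{Q}_\Go^T$ equals $(\bm{Q}_\Go)_{i+1,i}^T = \left(-\tfrac12\bm{B}_L\right)^T = -\tfrac12\bm{B}_R$ by fact (ii), which is precisely $-(\bm{Q}_\Go)_{i,i+1}$; the sub-diagonal and the two periodic corner blocks are checked identically. Every transposed block thus equals the negative of the corresponding block of $\bm{Q}_\Go$, giving $\bm{Q}_\Go^T = -\bm{Q}_\Go$.

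For the zero-row-sum property I would use that, thanks to periodicity, every block row of $\bm{Q}_\Go$ contains exactly the three blocks $\bm{S}$, $+\bm{B}_R$, and $-\bm{B}_L$. Hence $\bm{Q}_\Go\bm{1} = \bm{0}$ reduces blockwise to showing $(\bm{S} + \bm{B}_R - \bm{B}_L)\bm{1} = \bm{0}$, where $\bm{1}$ denotes the $N_p$-vector of ones. Combining the SBP relation \eqref{eq:DG_SBP}, $\bm{Q} + \bm{Q}^T = \bm{B}$, with fact (iii) gives $\bm{Q}^T\bm{1} = \bm{B}\bm{1}$, so that $\bm{S}\bm{1} = \bm{Q}\bm{1} - \bm{Q}^T\bm{1} = -\bm{B}\bm{1} = (1,0,\ldots,0,-1)^T$. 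Reading $\bm{B}_L\bm{1} = (1,0,\ldots,0)^T$ and $\bm{B}_R\bm{1} = (0,\ldots,0,1)^T$ off \eqref{eq:BLBRT}, these three vectors cancel termwise.

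The computations are routine once the three facts are in hand; the one place that needs care is the periodic wraparound. I must confirm that the corner entries $-\bm{B}_L$ in position $(1,K)$ and $+\bm{B}_R$ in position $(K,1)$ both respect the transpose pairing used in the skew-symmetry step and preserve the ``one $\bm{S}$, one $+\bm{B}_R$, one $-\bm{B}_L$ per block row'' count used in the row-sum step, so that the first and last block rows behave identically to the interior ones. I would also fix at the outset which all-ones vector $\bm{1}$ is intended (the global $K N_p$-vector, reducing blockwise to the $N_p$-vector of ones) to keep the dimensions unambiguous.
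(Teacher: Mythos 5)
Your proposal is correct and follows essentially the same route as the paper's proof: skew-symmetry from the blockwise transpose structure of $\bm{S}$, $\bm{B}_L$, $\bm{B}_R$, and zero row sums by reducing each block row to $\left(\bm{Q}-\bm{Q}^T\right)\bm{1} + \bm{B}_R\bm{1} - \bm{B}_L\bm{1}$ and invoking the SBP relation \eqref{eq:DG_SBP} together with $\bm{Q}\bm{1}=\bm{0}$. The only difference is presentational: you cancel the three vectors termwise where the paper combines them algebraically into $2\bm{Q}\bm{1}$, and you spell out the block-transpose check that the paper states as immediate.
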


\begin{proof}
  By \eqref{eq:DG_1D_Qglobal}, \eqref{eq:BLBRT}, $\bm{Q}_\Go$ is skew-symmetric. To prove the zero row sum property, observe from \eqref{eq:DG_SBP} that $\bm{B}\bm{1} = \left(\bm{B}_R - \bm{B}_L\right)\bm{1}$, and also that $\bm{Q} \bm{1} = \bm{0}$ because of the local differentiation property of $\bm{Q}$. Then each block row sum of $\bm{Q}_\Go$ as defined in \eqref{eq:DG_1D_Qglobal} reduces to 

\[
\left(\bm{Q}-\bm{Q}^T\right)\bm{1} + \bm{B}_R\bm{1} - \bm{B}_L\bm{1} 
  = \left( \bm{Q}-\bm{Q}^T + \bm{B} \right) \bm{1} \stackrel{\eqref{eq:DG_SBP}}{=} 2 \bm{Q} \bm{1} = \bm{0}.
\]

\end{proof}

\begin{theorem}
\label{thm:DG_1D_PBC_EC}
    The solution $\bm{u}_\Go$ of formulation \eqref{eq:DG_1D_FOM_PBC} obeys a semi-discrete entropy conservation condition $$\bm{1}^T\bm{M}_\Go\f{\rmd S(\bm{u}_\Go)}{\rmd\rmt} = 0.$$
\end{theorem}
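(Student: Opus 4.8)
The plan is to compute the left-hand side directly via the chain rule, substitute the semi-discrete evolution equation \eqref{eq:DG_1D_FOM_PBC}, and then show that the remaining quadratic expression vanishes. Since $\bm{M}_\Go$ is diagonal and $\bm{v} = \partial S/\partial \bm{u}$ by \eqref{eq:S-def}, the chain rule applied entrywise gives $\f{\rmd S(\bm{u}_i)}{\rmd\rmt} = \bm{v}_i^T \f{\rmd \bm{u}_i}{\rmd\rmt}$ at each node, so that
\[
  \bm{1}^T \bm{M}_\Go \f{\rmd S(\bm{u}_\Go)}{\rmd\rmt} = \bm{v}_\Go^T \bm{M}_\Go \f{\rmd \bm{u}_\Go}{\rmd\rmt},
\]
where $\bm{v}_\Go$ collects the nodal entropy variables. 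Substituting $\bm{M}_\Go \f{\rmd\bm{u}_\Go}{\rmd\rmt} = -2\LRp{\bm{Q}_\Go \circ \bm{F}}\bm{1}$ then reduces the claim to showing that the scalar quantity $\bm{v}_\Go^T \LRp{\bm{Q}_\Go \circ \bm{F}}\bm{1}$ equals zero.

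Next I would expand this quantity into a double sum. Writing the $i$-th block entry of $\LRp{\bm{Q}_\Go\circ \bm{F}}\bm{1}$ as $\sum_j (Q_\Go)_{ij}\, \bm{f}_{EC}(\bm{u}_i,\bm{u}_j)$, the target becomes $\sum_{i,j}(Q_\Go)_{ij}\,\bm{v}_i^T \bm{f}_{EC}(\bm{u}_i,\bm{u}_j)$. The central step is to symmetrize this sum: relabeling $i\leftrightarrow j$ and using both the flux symmetry $\bm{f}_{EC}(\bm{u}_i,\bm{u}_j)=\bm{f}_{EC}(\bm{u}_j,\bm{u}_i)$ from \eqref{eq:fluxCondition} and the skew-symmetry $(Q_\Go)_{ij}=-(Q_\Go)_{ji}$ from Theorem~\ref{thm:DG_1D_Qglobal}, one finds the sum equals its own negative after exchanging $\bm{v}_i$ for $\bm{v}_j$. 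Hence twice the sum collapses to $\sum_{i,j}(Q_\Go)_{ij}(\bm{v}_i-\bm{v}_j)^T \bm{f}_{EC}(\bm{u}_i,\bm{u}_j)$, and invoking the entropy conservation property in \eqref{eq:fluxCondition} turns each factor $(\bm{v}_i-\bm{v}_j)^T\bm{f}_{EC}(\bm{u}_i,\bm{u}_j)$ into the scalar difference $\psi(\bm{u}_i)-\psi(\bm{u}_j)$.

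Finally, I would split $\sum_{i,j}(Q_\Go)_{ij}\LRp{\psi(\bm{u}_i) - \psi(\bm{u}_j)}$ into two pieces and eliminate each using the zero row-sum property of $\bm{Q}_\Go$ (Theorem~\ref{thm:DG_1D_Qglobal}): the $\psi(\bm{u}_i)$ terms sum to $\sum_i \psi(\bm{u}_i) (\bm{Q}_\Go \bm{1})_i = 0$, while the $\psi(\bm{u}_j)$ terms sum to $\sum_j \psi(\bm{u}_j) (\bm{1}^T \bm{Q}_\Go)_j = 0$ since $\bm{1}^T\bm{Q}_\Go = -(\bm{Q}_\Go\bm{1})^T = \bm{0}^T$ by skew-symmetry. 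Therefore $\bm{v}_\Go^T\LRp{\bm{Q}_\Go\circ\bm{F}}\bm{1}=0$ and the entropy conservation condition follows.

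The step I expect to be the main obstacle is the index-swapping symmetrization: it requires carefully tracking that the scalar entries of $\bm{Q}_\Go$ and the $\R^n$-valued entries of $\bm{F}$ interact correctly under the Hadamard product, that flux symmetry is precisely the property to pair against the skew-symmetry of $\bm{Q}_\Go$, and that the entropy conservation identity applies termwise to each off-diagonal pair (the diagonal terms contribute $\bm{v}_i-\bm{v}_i=0$ and are harmless). Once this telescoping identity is set up, the chain rule and the row-sum cancellation are routine.
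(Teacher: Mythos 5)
Your proof is correct and is precisely the argument the paper defers to (the proof of Theorem 1 in \cite{Chan20ROM}): test with the entropy variables using diagonality of $\bm{M}_\Go$, symmetrize the double sum via skew-symmetry of $\bm{Q}_\Go$ and flux symmetry, apply the entropy conservation property of $\bm{f}_{EC}$, and annihilate the resulting $\psi$ terms with the zero row-sum property. No gaps; this matches the paper's intended proof.
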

  We omit the proof, as it is identical to the proof of Theorem 1 in \cite{Chan20ROM}. We note that this proof uses only the fact that $\bm{M}_\Go$ is diagonal and that $\bm{Q}_\Go$ is skew-symmetric and has zero row sums.
We note that a diagonal mass matrix is not strictly necessary, and that one can prove Theorem~\ref{thm:DG_1D_PBC_EC} using a more general ``modal'' entropy stable formulation as well \cite{Chan19Colloc}.

\subsubsection{Reduced order model}\label{sssec:ROM}

We begin by describing the construction of our ROM reduced basis. We employ proper orthogonal decomposition (POD) \cite{Chatterjee00} to construct a subspaced spanned by basis elements $\phi_j$. More specifically, we collect solution snapshots from the FOM at equally spaced times $[t_1,...,t_q]$. With $\bm{x}$ a $K N_p \times d$ matrix containing the $K N_p$ nodal locations, let ${u}_i(\bm{x},t) \in \R^{K N_p}$ denote the $i$th component of the solution at all interpolation nodes and at time $t$. We then form the snapshot matrix:
$$
    \bm{V}_{\text{snap}} = \begin{bmatrix}
     u_1(\bm{x},t_1) & \ldots u_n(\bm{x},t_1) & \cdots & {u}_1(\bm{x},t_q) & \ldots 
     & {u}_n(\bm{x},t_q)
    \end{bmatrix} \in \mathbb{R}^{KN_p\times q n }.
$$
A reduced basis can be obtained by computing a singular value decomposition (SVD) to weighted snapshots $$\sqrt{\bm{M}_\Go}\bm{V}_{\text{snap}} = \bm{U} \bm{\Sigma} \bm{V}^T$$ such that each column of the resulting matrix $\LRp{\sqrt{\bm{M}_\Go}}^{-1}\bm{U}$ represents a POD mode \cite{Liang01}. We now take the first $N$ left singular vectors modes as our reduced basis, i.e., as spatial discretizations of basis elements that we use to approximate each solution component. Let $\bm{V}_{N,1}$ 
denote the generalized Vandermonde matrix, given by the first $N$ columns of $\LRp{\sqrt{\bm{M}_\Go}}^{-1}\bm{U}$, i.e., where column $j$ corresponds to evaluating a conceptual basis function $\phi_j$ at the $K N_p$ spatial locations $\bm{x}$, 
$$\LRp{\bm{V}_{N,1}}_{:,j} =\phi_j(\bm{x}).$$

\par We seek a numerical solution from the span of the POD basis vectors $\bm{u}_\Go\approx\bm{V}_N \bm{u}_N$, where $\bm{u}_N$ are modal coefficients in the reduced basis that we must identify. As before, $\bm{u}_N$ is a vector-of-vectors, i.e., a size-$N$ vector whose individual entries are size-$n$ vectors.
We now impose the Galerkin formulation \cite{Rowley04} to identify our ROM: We plug $\bm{u}_\Go = \bm{V}_N \bm{u}_N$ into the FOM \eqref{eq:DG_1D_FOM_PBC} and force the residual to be orthogonal to $\mathrm{range}(\bm{V}_N)$
yielding the following reduced system
\begin{equation}
    \bm{V}_N^T\bm{M}_\Go\bm{V}_N\f{\rmd\bm{u}_N}{\rmd\rmt}+2\bm{V}_N^T\LRp{\bm{Q}_\Go\circ\bm{F}}\bm{1} = \bm{0},
    \label{eq:FV_ROM}
\end{equation}
  where $\bm{F}$ is again in $K \times K$ block form as in \eqref{eq:F-block}, but the blocks are now given by 
\begin{align*}
  \LRp{\bm{F}_{k_1,k_2}}_{ij} &= \bm{f}_{EC}\LRp{\left(\bm{V}_N \bm{u}_N\right)_{i,k_1},\left(\bm{V}_N \bm{u}_N\right)_{j,k_2}},
\end{align*}

Unfortunately, as observed in \cite{Chan20ROM}, the formulation \eqref{eq:FV_ROM} is no longer entropy conservative. To remedy this, we introduce the entropy-projection $\bm{u}\LRp{\bm{V}_N\bm{V}_N^\dagger\bm{v}(\bm{V}_N\bm{u}_N)}$, where $\bm{u} \mapsto \bm{v}(\bm{u})$ and $\bm{v} \mapsto \bm{u}(\bm{v})$ correspond to the $\R^n \mapsto \R^n$ invertible entropy variables transformation in \eqref{eq:S-def}. I.e., we evaluate the conservative variables in terms of the projection of the entropy variables onto $\mathrm{range}(\bm{V}_N)$, and these transformations are pointwise in space, e.g., $\bm{v}(\bm{V}_N \bm{u}_N)$ means that the map $\bm{v}$ is applied componentwise to each element in the vector-of-vectors $\bm{V}_N \bm{u}_N$. \footnote{We note the entropy projection does not enforce positivity for the conservative variables, and that to the best of our knowledge, this is an open issue for schemes which utilize the entropy projection \cite{KLEIN2025, Parsani16, Chan18, PETHRICK2025}.} Combining the entropy projection with Galerkin projection yields the following formulation:
\begin{equation}
\begin{split}
    & \bm{M}_N\f{\rmd\bm{u}_N}{\rmd\rmt} + 2\bm{V}_N^T\LRp{\bm{Q}_\Go\circ\bm{F}}\bm{1} = \bm{0},\\
    &  \bm{M}_N = \bm{V}_N^T\bm{M}_\Go\bm{V}_N\csp \Tilde{\bm{u}} = \bm{u}\LRp{\bm{V}_N\bm{V}_N^\dagger\bm{v}(\bm{V}_N\bm{u}_N)}, \\
    &\textrm{$\bm{F}$ and $\bm{F}_{k_1, k_2}$ as in \eqref{eq:F-block}, but } \LRp{\bm{F}_{k_1,k_2}}_{ij} = \bm{f}_{EC}\LRp{\Tilde{\bm{u}}_{i,k_1},\Tilde{\bm{u}}_{j,k_2}},
\end{split}
    \label{eq:DG_ROM_ES}
\end{equation}
This new formulation results in a restoration of entropy conservation.
\begin{theorem}
\label{thm:DG_1D_ROM_ES}
    The solution $\bm{u}_N$ in \eqref{eq:DG_ROM_ES} satisfies semi-discrete entropy conservation $$\bm{1}^T\bm{M}_\Go\f{\rmd S(\bm{V}_N\bm{u}_N)}{\rmd\rmt} = 0.$$
    
\end{theorem}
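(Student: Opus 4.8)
The plan is to mirror the continuous entropy balance \eqref{eq:NCL_ES} at the discrete level by differentiating the total entropy in time and showing the result vanishes. First I would apply the chain rule nodewise: since $S$ and the entropy variables $\bm{v} = \partial S/\partial\bm{u}$ act pointwise in space and $\bm{M}_\Go$ is diagonal, differentiating through $\bm{u}_\Go = \bm{V}_N\bm{u}_N$ gives
\[
  \bm{1}^T\bm{M}_\Go\f{\rmd S(\bm{V}_N\bm{u}_N)}{\rmd\rmt} = \bm{v}(\bm{V}_N\bm{u}_N)^T\bm{M}_\Go\bm{V}_N\f{\rmd\bm{u}_N}{\rmd\rmt}.
\]
This reduces the claim to showing the right-hand side is zero.

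The key step, which I expect to carry the argument, is the entropy projection identity. Writing $\bm{v}_\Go = \bm{v}(\bm{V}_N\bm{u}_N)$ and letting $\bm{V}_N^\dagger$ denote the $\bm{M}_\Go$-weighted pseudoinverse (so $\bm{V}_N\bm{V}_N^\dagger$ is the $\bm{M}_\Go$-orthogonal projector onto $\mathrm{range}(\bm{V}_N)$, $\bm{V}_N^\dagger\bm{V}_N = \bm{I}$, and $\bm{M}_N = \bm{V}_N^T\bm{M}_\Go\bm{V}_N$), a direct computation gives $\bm{v}_\Go^T\bm{M}_\Go\bm{V}_N = \LRp{\bm{V}_N\bm{V}_N^\dagger\bm{v}_\Go}^T\bm{M}_\Go\bm{V}_N$. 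Now I would use that $\Tilde{\bm{u}} = \bm{u}\LRp{\bm{V}_N\bm{V}_N^\dagger\bm{v}_\Go}$ together with the fact that $\bm{u},\bm{v}$ are mutually inverse pointwise, so the projected entropy variables are \emph{exactly} $\bm{v}(\Tilde{\bm{u}}) = \bm{V}_N\bm{V}_N^\dagger\bm{v}_\Go \in \mathrm{range}(\bm{V}_N)$. Hence $\bm{v}_\Go$ may be replaced by $\bm{v}(\Tilde{\bm{u}}) = \bm{V}_N\bm{w}$ with $\bm{w} = \bm{V}_N^\dagger\bm{v}_\Go$, and substituting the ROM evolution equation \eqref{eq:DG_ROM_ES} for $\bm{M}_N\frac{\rmd\bm{u}_N}{\rmd\rmt}$ collapses the test onto the reduced basis:
\[
  \bm{v}_\Go^T\bm{M}_\Go\bm{V}_N\f{\rmd\bm{u}_N}{\rmd\rmt} = \bm{w}^T\bm{M}_N\f{\rmd\bm{u}_N}{\rmd\rmt} = -2\,\bm{v}(\Tilde{\bm{u}})^T\LRp{\bm{Q}_\Go\circ\bm{F}}\bm{1}.
\]

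It then remains to show $\bm{v}(\Tilde{\bm{u}})^T(\bm{Q}_\Go\circ\bm{F})\bm{1} = 0$, which is the standard flux-differencing computation. Expanding the Hadamard product over global node indices $i,j$ and symmetrizing the double sum using the skew-symmetry of $\bm{Q}_\Go$ (Theorem~\ref{thm:DG_1D_Qglobal}) with the symmetry of $\bm{f}_{EC}$ from \eqref{eq:fluxCondition} yields
\[
  \bm{v}(\Tilde{\bm{u}})^T(\bm{Q}_\Go\circ\bm{F})\bm{1} = \f12\sum_{i,j}\LRp{\bm{Q}_\Go}_{ij}\LRp{\bm{v}(\Tilde{\bm{u}}_i) - \bm{v}(\Tilde{\bm{u}}_j)}^T\bm{f}_{EC}(\Tilde{\bm{u}}_i,\Tilde{\bm{u}}_j).
\]
Because the test states here are $\bm{v}(\Tilde{\bm{u}}_i)$ — precisely the entropy variables consistent with the flux arguments $\Tilde{\bm{u}}_i$ — the entropy conservation property of $\bm{f}_{EC}$ applies and each jump term collapses to $\psi(\Tilde{\bm{u}}_i) - \psi(\Tilde{\bm{u}}_j)$. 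Splitting the resulting sum and invoking the zero-row-sum property $\bm{Q}_\Go\bm{1} = \bm{0}$ (and, via skew-symmetry, $\bm{1}^T\bm{Q}_\Go = \bm{0}$) from Theorem~\ref{thm:DG_1D_Qglobal} annihilates both halves, completing the proof. I expect the only real obstacle to be bookkeeping: verifying that the projected entropy variables coincide with $\bm{v}(\Tilde{\bm{u}})$ so that \eqref{eq:fluxCondition} may be invoked at the entropy-projected states rather than at the raw states $\bm{V}_N\bm{u}_N$ — this substitution is exactly what the entropy projection buys us, and its absence is why the naive Galerkin form \eqref{eq:FV_ROM} fails to conserve entropy.
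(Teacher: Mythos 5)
Your proof is correct and follows essentially the same route as the argument the paper defers to (Theorem 2 of \cite{Chan20ROM}): test with the entropy-projected entropy variables, use that the $\bm{M}_\Go$-orthogonal projection places $\bm{v}(\Tilde{\bm{u}})$ in $\mathrm{range}(\bm{V}_N)$ so the Galerkin equation can be invoked, then apply the flux-differencing identity with skew-symmetry, flux symmetry, the entropy conservation condition \eqref{eq:fluxCondition}, and the zero row sums of $\bm{Q}_\Go$ from Theorem~\ref{thm:DG_1D_Qglobal}. Your closing observation --- that the entropy projection is precisely what makes the test states consistent with the flux arguments so that \eqref{eq:fluxCondition} applies --- is the correct identification of the key point.
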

We again omit the proof as it is identical to proof of Theorem 2 in \cite{Chan20ROM}.

\par While the use of the entropy projection preserves entropy conservation, it is equally important to ensure that the accuracy of the solution is maintained after the projection step. To compute the entropy projection, we project the evaluation of the entropy variables onto our reduced basis. Because the entropy projected conservative variables are then evaluated using this projection of the entropy variables, it is important to ensure that the reduced basis accurately captures the solution when represented in both the conservative variables and entropy variables such that both $\tilde{\bm{u}}=\bm{u}\left(\bm{V}_N\bm{V}_N^\dagger\bm{v}(\bm{V}_N\bm{u}_N)\right)$ and $\bm{V}_N\bm{u}_N$ yield accurate approximations of the solution. In practice, we augment the snapshot matrix $\bm{V}_\text{snap}$ with snapshots of the entropy variables:

\[
\begin{bmatrix}
    
     v_1(\bm{u}(\bm{x},t_1)) & \ldots v_n(\bm{u}(\bm{x},t_1)) & \cdots & {v}_1(\bm{u}(\bm{x},t_q)) & \ldots 
     & {v}_n(\bm{u}(\bm{x},t_q))
    \end{bmatrix} \in \mathbb{R}^{KN_p\times q n }.
\]

\subsection{Hyper-reduction}

Additional hyper-reduction steps are necessary to reduce the computational cost associated with the ROM \eqref{eq:DG_ROM_ES}. In particular, we seek a way to reduce the cost of evaluating the flux term $\bm{V}_N^T\LRp{\bm{Q}\circ\bm{F}}\bm{1}$. The hyper-reduction applied to the volume term is similar to the one used in \cite{Chan20ROM}. Here, we provide a summary of the main steps while emphasizing certain differences due to presence of non-uniform quadrature weights in high order DG methods. 

\par  We employ a sampling and weighting strategy \cite{Farhat15,Chapman16} where we determine $\mathcal{I}$, a subset of node indices, along with corresponding positive weights $\bm{w}$, to form a reduced quadrature rule that also allows us to retain entropy conservation. We will use this hyper-reduced quadrature to construct a smaller hyper-reduced matrix $\widebar{\bm{Q}}$ and approximate $$\bm{V}_N^T\LRp{\bm{Q}\circ\bm{F}} \approx \widebar{\bm{V}}_N^T\bm{W}\LRp{\widebar{\bm{Q}}\circ\bm{F}}\csp \widebar{\bm{V}}_N = \bm{V}_N\LRp{\mathcal{I},:}\csp \bm{W} = \text{diag}(\bm{w}),$$
where the notation $\bm{V}_N\LRp{\mathcal{I},:}$ refers to the matrix consisting of rows $i$ of $\bm{V}_N$ for all $i \in \mathcal{I}$.

\par We utilize the greedy algorithm for computing empirical cubature points and weights \cite{Hernandez16}. Since we assume a fixed reduced basis in time, we compute a single set of empirical cubature points and use them throughout the simulation process. Let $\bm{V}_{\text{target}}$ represent the target matrix, whose columns span the desired function space to be integrated, and $\bm{w}_{\text{target}}$ be the initial reference weights. The algorithm produces an index set $\mathcal{I}$ and corresponding new weights $\bm{w}$ such that: $$\bm{V}_{\text{target}}^T\bm{w}_{\text{target}} \approx \bm{V}_{\text{target}}(\mathcal{I},:)^T\bm{w},$$ 

\par For DG, we set $\bm{w}_{\text{target}}=\bm{V}_{\text{target}}^T \bm{w}_\Go$, where $\bm{w}_\Go$ is a global vector of DG quadrature weights on each physical element. Then, we set $\bm{V}_{\text{target}}$ to be the span of products of $\phi_i(\bm{x})\phi_j(\bm{x})$ such that $$\mathcal{R}(\bm{V}_{\text{target}}) = \text{span}\LRc{\bm{V}_N(:,i)\circ\bm{V}_N(:,j),\quad i,j=1,...,N},$$ 
where $\bm{V}_(:,i)$ refers to the $i$th column of $\bm{V}_N$. This choice of target space ensures that the mass matrix $\bm{M}_N=\bm{V}_N^T\bm{M}\bm{V}_N$ is accurately approximated. We then use the SVD to remove linearly dependent vectors in $\bm{V}_\text{target}$ to produce a full rank matrix. For convenience, we use $*$ to denote the operations used to construct $\bm{V}_{\text{target}}$ (e.g., forming products of columns and removing linearly dependent vectors)
$$\bm{V}_{\text{target}} = \bm{V}_N * \bm{V}_N,$$ to which we refer as ``targeting $\bm{V}_N * \bm{V}_N$".

 \par We are able to extend the proof of entropy conservation in \autoref{thm:DG_1D_ROM_ES} as long as the hyper-reduced matrix $\widebar{\bm{Q}}$ maintains the two properties (skew-symmetry and zero row sums). To ensure this, we use a two-step hyper-reduction. We first compute a compressed test matrix $$\hat{\bm{Q}}_t = \bm{V}_t^T\bm{Q}\bm{V}_t$$ from some test basis $\bm{V}_t$, and then construct the hyper-reduced nodal differentiation matrix $$\widebar{\bm{Q}} = \bm{P}_t^T\hat{\bm{Q}}_t\bm{P}_t$$ using the orthogonal projection $\bm{P}_t = \bm{M}_t^{-1}\widebar{\bm{V}}_t^T\bm{W}$ onto the test basis.

\subsubsection{Choice of test basis}

 In previous work \cite{Chan20ROM}, the test basis was chosen such that 
\begin{equation}
    \mathcal{R}\LRp{\bm{1}},\: \mathcal{R}\LRp{\bm{V}_N},\: \mathcal{R}\LRp{\bm{Q}_\Go\bm{V}_N} \subset \mathcal{R}\LRp{\bm{V}_t}.
    \label{eq:DG_testbasis1}
\end{equation} 
For finite volume FOMs, enriching the test basis to include ${\bm{Q}_\Go\bm{V}_N}$ ensures that, in addition to  maintaining both skew-symmetry and zero row sums,
$\widebar{\bm{Q}}$ accurately samples the derivative matrix $\bm{Q}_\Go\bm{V}_N$. Initial numerical experiments, however, revealed this choice of enrichment to be inaccurate for DG FOMs.

For DG FOMs, we augment the test basis with $\bm{M}_\Go^{-1}\bm{Q}_\Go^T\bm{V}_N$ instead of $
\bm{Q}_{\Go}\bm{V}_N$, so that
\begin{equation}
   \mathcal{R}\LRp{\bm{1}},\: \mathcal{R}\LRp{\bm{V}_N},\:  \mathcal{R}\LRp{\bm{M}_\Go^{-1}\bm{Q}_\Go^T\bm{V}_N}\subset \mathcal{R}\LRp{\bm{V}_t}.
    \label{eq:DG_testbasis2}
\end{equation}
We note that, when $\bm{M}_\Go$ is not a scalar multiple of the identity, the initial choice of test basis \eqref{eq:DG_testbasis1} no longer accurately samples the action of $\bm{Q}_\Go$. Introducing \eqref{eq:DG_testbasis2} fixes this. Note that for finite volume FOMs, $\bm{M}_\Go = h\bm{I}$, and this enrichment reduces \eqref{eq:DG_testbasis2} to the original test basis \eqref{eq:DG_testbasis1} used in \cite{Chan20ROM}.

\par This choice of test basis enrichment ensures that, at least under ``ideal'' hyper-reduction (when the hyper-reduced quadrature is taken to be the entire set of FOM nodes), the resulting hyper-reduced matrix is accurate in the following sense:
\begin{theorem}
\label{thm: DG_1D_HR_testbasis}
Under ``ideal'' hyper-reduction (using all FOM nodes for the hyper-reduced quadrature), the test basis choice \eqref{eq:DG_testbasis2} ensures the error between $\bm{Q}$ and the hyper-reduced operator $\widebar{\bm{Q}}$ is orthogonal to the basis matrix $\bm{V}_N$, i.e.
$$\bm{V}_N^T\LRp{\bm{Q} - \widebar{\bm{Q}}} = \bm{V}_N^T\LRp{\bm{Q} - \bm{P}_t^T\bm{V}_t^T\bm{Q}\bm{V}_t\bm{P}_t} = \bm{0}.$$
\end{theorem}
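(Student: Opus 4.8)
The plan is to recognize that, under ideal hyper-reduction, the map $\bm{P}_t = \bm{M}_t^{-1}\widebar{\bm{V}}_t^T\bm{W}$ collapses to the coefficient operator of the $\bm{M}_\Go$-orthogonal projection onto $\mathcal{R}\LRp{\bm{V}_t}$: taking $\mathcal{I}$ to be all FOM nodes gives $\widebar{\bm{V}}_t = \bm{V}_t$, the full DG quadrature weights $\bm{W} = \bm{M}_\Go$, and hence $\bm{M}_t = \bm{V}_t^T\bm{M}_\Go\bm{V}_t$, so that $\bm{P}_t = \LRp{\bm{V}_t^T\bm{M}_\Go\bm{V}_t}^{-1}\bm{V}_t^T\bm{M}_\Go$ and $\bm{\Pi} \coloneqq \bm{V}_t\bm{P}_t$ is the $\bm{M}_\Go$-orthogonal projector onto $\mathcal{R}\LRp{\bm{V}_t}$. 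With $\bm{Q} \equiv \bm{Q}_\Go$ I would then rewrite the hyper-reduced operator compactly as $\widebar{\bm{Q}} = \bm{P}_t^T\bm{V}_t^T\bm{Q}_\Go\bm{V}_t\bm{P}_t = \bm{\Pi}^T\bm{Q}_\Go\bm{\Pi}$. Three structural facts about $\bm{\Pi}$ will carry the argument: it acts as the identity on $\mathcal{R}\LRp{\bm{V}_t}$, so by the enriched test-basis condition \eqref{eq:DG_testbasis2} both $\bm{\Pi}\bm{V}_N = \bm{V}_N$ and $\bm{\Pi}\LRp{\bm{M}_\Go^{-1}\bm{Q}_\Go^T\bm{V}_N} = \bm{M}_\Go^{-1}\bm{Q}_\Go^T\bm{V}_N$; it is self-adjoint in the $\bm{M}_\Go$ inner product, i.e.\ $\bm{\Pi}^T\bm{M}_\Go = \bm{M}_\Go\bm{\Pi}$; and $\bm{Q}_\Go$ is skew-symmetric by Theorem~\ref{thm:DG_1D_Qglobal}.

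First I would use $\bm{\Pi}\bm{V}_N = \bm{V}_N$ to peel off the left projector. Since $\bm{V}_N^T\bm{\Pi}^T = \LRp{\bm{\Pi}\bm{V}_N}^T = \bm{V}_N^T$, the hyper-reduced term simplifies to $\bm{V}_N^T\widebar{\bm{Q}} = \bm{V}_N^T\bm{Q}_\Go\bm{\Pi}$, so the whole claim reduces to proving the single identity $\bm{V}_N^T\bm{Q}_\Go\LRp{\bm{I} - \bm{\Pi}} = \bm{0}$.

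Then I would establish that this residual vanishes by transposing and exploiting the skew/self-adjoint interplay. The transpose is $\LRp{\bm{I} - \bm{\Pi}}^T\bm{Q}_\Go^T\bm{V}_N$. Writing $\bm{Y} \coloneqq \bm{M}_\Go^{-1}\bm{Q}_\Go^T\bm{V}_N$ so that $\bm{Q}_\Go^T\bm{V}_N = \bm{M}_\Go\bm{Y}$, and pushing $\bm{M}_\Go$ through the projector via self-adjointness, $\LRp{\bm{I} - \bm{\Pi}^T}\bm{M}_\Go = \bm{M}_\Go\LRp{\bm{I} - \bm{\Pi}}$, I obtain $\LRp{\bm{I} - \bm{\Pi}}^T\bm{M}_\Go\bm{Y} = \bm{M}_\Go\LRp{\bm{I} - \bm{\Pi}}\bm{Y}$. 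The test-basis condition forces $\bm{\Pi}\bm{Y} = \bm{Y}$, hence $\LRp{\bm{I} - \bm{\Pi}}\bm{Y} = \bm{0}$ and the expression vanishes; transposing back gives $\bm{V}_N^T\bm{Q}_\Go\LRp{\bm{I} - \bm{\Pi}} = \bm{0}$, which is the theorem.

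The main obstacle is conceptual rather than computational: seeing precisely why the enrichment must be $\bm{M}_\Go^{-1}\bm{Q}_\Go^T\bm{V}_N$ and not $\bm{Q}_\Go\bm{V}_N$. Because $\bm{\Pi}$ is orthogonal with respect to the \emph{weighted} $\bm{M}_\Go$ inner product, the factor of $\bm{M}_\Go$ generated when moving $\bm{Q}_\Go$ past $\bm{V}_N$ (through skew-symmetry together with self-adjointness) must be cancelled by a matching $\bm{M}_\Go^{-1}$ built into the enriched subspace. This is exactly the mismatch the paper flags when $\bm{M}_\Go$ is not a scalar multiple of the identity, and it is where the original finite-volume test basis \eqref{eq:DG_testbasis1} fails for genuine DG mass matrices. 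I would therefore keep the $\bm{M}_\Go$-weighting explicit at every step, since that bookkeeping is what makes the skew-symmetric and self-adjoint structures combine to annihilate the residual.
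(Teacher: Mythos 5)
Your proposal is correct and follows essentially the same route as the paper: peel off the left projector via $\bm{V}_t\bm{P}_t\bm{V}_N = \bm{V}_N$, then kill the residual $\bm{V}_N^T\bm{Q}(\bm{I}-\bm{V}_t\bm{P}_t)$ by combining the $\bm{M}_\Go$-self-adjointness of the projector with the fact that $\bm{M}_\Go^{-1}\bm{Q}_\Go^T\bm{V}_N$ lies in $\mathcal{R}(\bm{V}_t)$. The only cosmetic difference is that you transpose before invoking orthogonality while the paper applies it directly; note also that skew-symmetry of $\bm{Q}_\Go$ is never actually used in your steps (nor in the paper's), which is good since the result must also hold for the non-skew-symmetric weak-boundary-condition operators.
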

\begin{proof}
    Since $\mathcal{R}\LRp{\bm{V}_N} \subset \mathcal{R}\LRp{\bm{V}_t}$, $\bm{V}_t\bm{P}_t$ acts as an identity projector for $\bm{V}_N$ such that
    $$\bm{V}_t\bm{P}_t\bm{V}_N = \bm{V}_N.$$
    Therefore, 
    \begin{equation*}
        \begin{split}
            & \bm{V}_N^T\LRp{\bm{Q} - \bm{P}_t^T\bm{V}_t^T\bm{Q}\bm{V}_t\bm{P}_t} = \bm{V}_N^T\bm{Q} - (\bm{V}_t\bm{P}_t\bm{V}_N)^T\bm{Q}\bm{V}_t\bm{P}_t  \\
            & = \bm{V}_N^T\bm{Q}(\bm{I}-\bm{V}_t\bm{P}_t)  = \LRp{\bm{M}_\Go^{-1}\bm{Q}^T\bm{V}_N}^T\bm{M}_\Go(\bm{I}-\bm{V}_t\bm{P}_t) = \bm{0},
        \end{split}
    \end{equation*}
    which follows since  $\mathcal{R}\LRp{\bm{M}_\Go^{-1}\bm{Q}_\Go^T\bm{V}_N}\subset \mathcal{R}\LRp{\bm{V}_t}$  and $\bm{V}_t\bm{P}_t$ is an orthogonal projection onto $\mathcal{R}\LRp{\bm{V}_t}$ with respect to the inner product induced by $\bm{M}_\Go$.
\end{proof}

\par To illustrate the difference in the accuracy between using the original test basis enrichment $\bm{Q}_\Go\bm{V}_N$ and new test basis enrichment $\bm{M}_\Go^{-1}\bm{Q}_\Go\bm{V}_N$, we present an example for the 1D viscous Burgers' equation. We utilize 256 elements with interpolation polynomial degree 3 over 1D periodic domain $[-1,1]$. We use initial condition $u_0=0.5-\sin(\pi x),$ set the viscosity coefficient to $\epsilon=1\times10^{-2}$, and simulate until time $T=1.0$. The setting of this numerical experiment (e.g., physical parameters) is identical to the one in Sec \ref{sec:burgers}, and details of the numerical setting (e.g., time integration scheme) can be found in Sec \ref{sec:preliminary}. Figure~\ref{fig:testbasis} shows the FOM and ROM solutions at the final time, where the blue line represents the FOM solution, the dashed orange line represents the ROM solution (without hyper-reduction), and the green line represents the hyper-reduced ROM solution. The ROM without hyper-reduction closely matches the FOM solution; however, using the original test basis results in large errors for the hyper-reduced DG ROM. Adopting the test basis significantly improves accuracy of the hyper-reduced DG ROM.

\begin{figure}
\centering
\noindent
\subfloat[(a) $N=20$, FVM test basis]{\includegraphics[width=.49\textwidth]{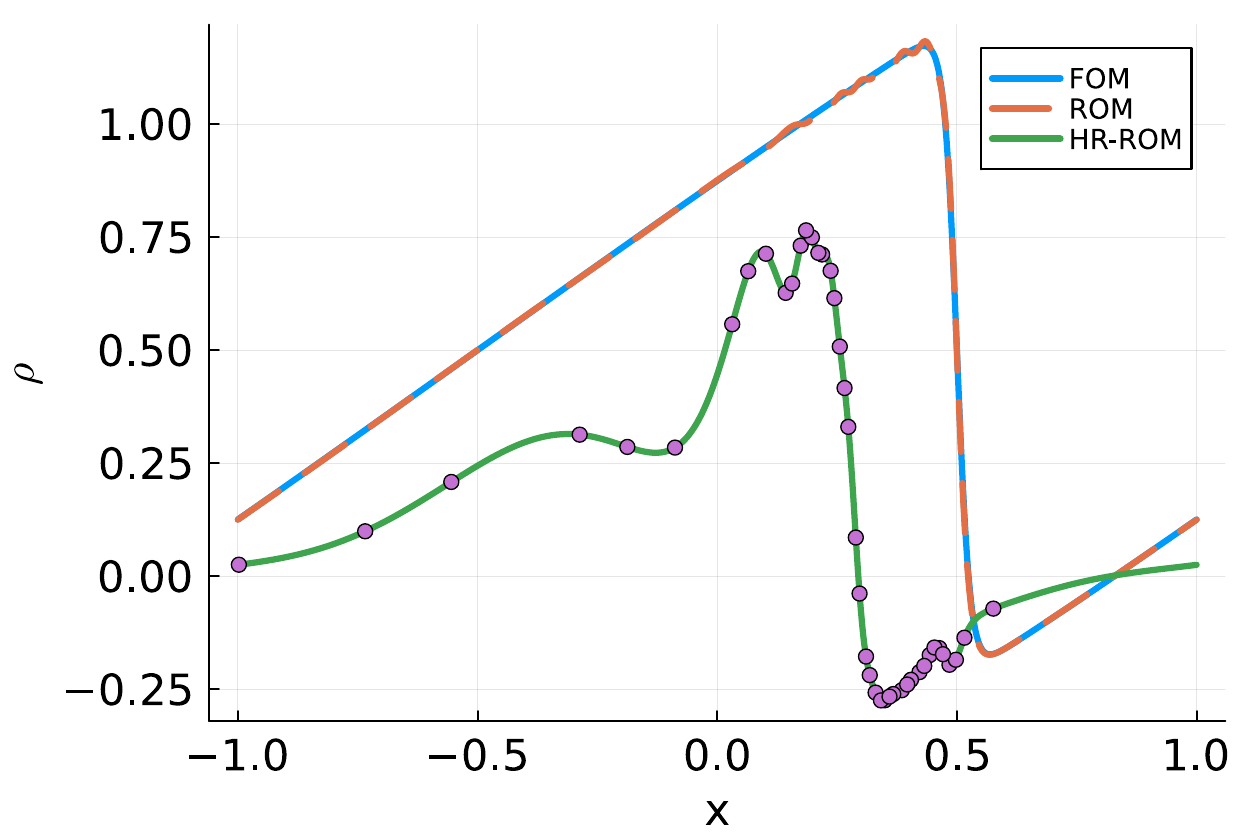}}
\subfloat[(b) $N=20$, new test basis]{\includegraphics[width=.49\textwidth]{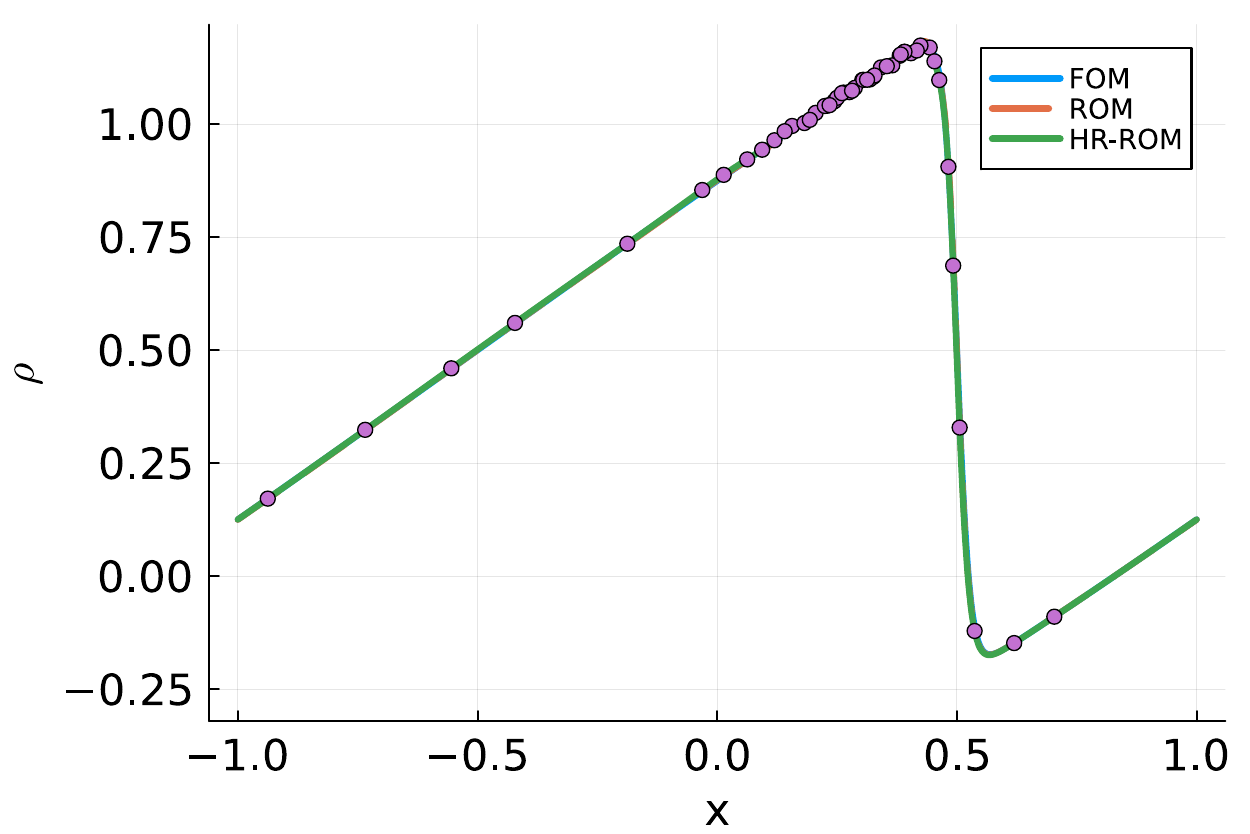}}
\caption{Density plots of FOM, ROM and hyper-reduced ROM solutions with dotted hyper-reduced nodes. Left: original test basis \eqref{eq:DG_testbasis1}; Right: new test basis  \eqref{eq:DG_testbasis2}.}
\label{fig:testbasis}
\end{figure}


\begin{remark}
    In \cite{Chan20ROM}, it is recommended to only target $\bm{V}_N * \bm{V}_N$, based on the observation that targeting $\bm{V}_N * \bm{V}_t$ or $\bm{V}_t * \bm{V}_t$ requires a significantly larger number of nodes without substantially enhancing accuracy. However, as detailed in \cite{Chan20ROM}, targeting only $\bm{V}_N * \bm{V}_N$ may result in a singular test mass matrix. This can be addressed by adding ``stabilizing" hyper-reduced quadrature nodes, and is done for all experiments in this paper using the approach specified in Section 4.3 of \cite{Chan20ROM}. 
\end{remark}

\subsection{Weakly-imposed boundary conditions}

In this section, we extend our model to accommodate weakly-imposed boundary conditions. In DG methods, the boundary condition is usually applied by specifying the relevant exterior boundary states $\bm{u}_{i,k}^+$ during the flux evaluation. On a 1D domain, this only entails specifying the exterior values for the two boundary nodes, namely $\bm{u}_{1,1}^+$ and $\bm{u}_{N_p,K}^+$. The resulting global formulation is derived from \eqref{eq:DG_1D_FOM2}
\begin{equation}
    \bm{M}_\Go\f{\rmd\bm{u}_\Go}{\rmd\rmt} + 2\LRp{\bm{Q}_\Go\circ\bm{F}}\bm{1} + \bm{B}_\Go\bm{f}_\Go^* = \bm{0},
    \label{eq:DG_1D_FOM_weakBC}
\end{equation}
where $\bm{M}_\Go$ remains unchanged from \eqref{eq:DG_1D_FOM_PBC}, but $\bm{Q}_\Go$, $\bm{B}_\Go$, and $\bm{f}_\Go^*$ account for boundary terms, and are now 
\begin{gather*}
      \bm{Q}_\Go = \f{1}{2}\begin{bmatrix}
        \bm{S} & \bm{B}_R & &  \\
        -\bm{B}_L & \bm{S} & \bm{B}_R & \\
        & -\bm{B}_L & \ddots & \bm{B}_R \\
         & & -\bm{B}_L & \bm{S} 
    \end{bmatrix} +\f{1}{2}\bm{B}_\Go, \\ 
    \bm{B}_\Go = \begin{bmatrix}
        -1 & & & &\\
        & 0 & & &\\
        & & \ddots & &\\
        & & & 0 & \\
        & & & & 1
    \end{bmatrix}\csp \bm{f}_\Go^* = \begin{bmatrix}
        \bm{f}_{EC}(\bm{u}_{1,1},\bm{u}_{1,1}^+)  \\
        0 \\
        \vdots \\
        0 \\
         \bm{f}_{EC}(\bm{u}_{N_p,K},\bm{u}_{N_p,K}^+)
    \end{bmatrix}.
\end{gather*}
Here, $\bm{B}_\Go$ and $\bm{f}_\Go^*$ are the global version of $\bm{B}$ and $\bm{f}^*$ to match the global matrix size, and $\bm{Q}_\Go$ obeys a global SBP property
\begin{equation}
    \bm{Q}_\Go + \bm{Q}_\Go^T = \bm{B}_\Go.
    \label{eq:DG_1D_Qglobal_SBP_weakBC}
\end{equation} Following \cite{Chen17}, if we employ the entropy-conservative flux at the boundary, this model formulation retains entropy conservation (similar to the periodic boundary case) under appropriate boundary conditions (for example, under ``mirror state'' imposition of reflective wall boundary conditions for the compressible Euler equations \cite{Chen17}). 

\par Extending \eqref{eq:DG_1D_FOM_weakBC} to the ROM setting requires extra steps compared with the periodic case. First, the hyper-reduced differentiation matrix $\widebar{\bm{Q}}$ now obeys a ``generalized" SBP property \cite{Fernandez14SBP}
\begin{equation}
\label{eq:gen_SBP}
    \widebar{\bm{Q}} + \widebar{\bm{Q}}^T = \bm{E}^T\bm{B}_b\bm{E},
\end{equation}
where $\widebar{\bm{Q}} = \bm{P}_t^T\bm{V}_t^T\bm{Q}_\Go\bm{V}_t\bm{P}_t$ and the matrices $\bm{B}_b$ and $\bm{E}$ encode scalings by outward unit normals and interpolation at boundary nodes, respectively. This complicates the imposition of weak boundary conditions and inter-element fluxes for high order DG methods \cite{Chan18}. To address this, we introduce a ``hybridized'' SBP operator which simplifies the imposition of weak boundary conditions for ROMs.


For 1D ROMs, the boundary normal and boundary interpolation matrices $\bm{B}_b$ and $\bm{E}$ are given by 
\begin{equation}
    \bm{B}_b = \begin{bmatrix}
        -1 & \\ & 1
    \end{bmatrix} \csp
    \bm{E} = \bm{V}_{bt}\bm{P}_t \csp \bm{V}_{bt} = \begin{bmatrix}
        \bm{V}_t(1,:) \\ \bm{V}_t(KN_p,:)
    \end{bmatrix},
    \label{eq:DG_1D_generalSBP}
\end{equation}
where $\bm{V}_{bt}$ interpolates test basis functions to boundary points (e.g.\ the leftmost and rightmost nodes on a 1D domain). These operators can be used to construct hybridized SBP operators \cite{Chan18}, which enable the imposition of weak boundary conditions for entropy stable methods built on generalized SBP operators:
\begin{equation}
    \widebar{\bm{Q}}_h = \f{1}{2}\begin{bmatrix}
        \widebar{\bm{Q}} -\widebar{\bm{Q}}^T & \bm{E}^T\bm{B}_b \\
        -\bm{B}_b\bm{E} & \bm{B}_b
    \end{bmatrix}.
    \label{eq:DG_1D_Qh}
\end{equation}
By construction, $\widebar{\bm{Q}}_h$ satisfies a block SBP property
\begin{equation}
\label{eq:block_SBP}
    \widebar{\bm{Q}}_h + \widebar{\bm{Q}}_h^T = \begin{bmatrix}
        \bm{0} & \\ & \bm{B}_b
    \end{bmatrix} := \bm{B}_h.
\end{equation}
Moreover, one can show that $\widebar{\bm{Q}}_h$ also has zero row sums \cite{Chan20ROM}.

\par Now, define $\bm{V}_b$ as the matrix evaluating the reduced basis at boundary nodes, $\widebar{\bm{V}}_h$ as the matrix evaluating the reduced basis at both boundary and hyper-reduced volume nodes, and $\bm{f}_b^*$ as the vector of boundary flux evaluations:
\begin{equation*}
    \bm{V}_b = \begin{bmatrix}
            \bm{V}_N(1,:) \\ \bm{V}_N(KN_p,:)
    \end{bmatrix} \csp
    \widebar{\bm{V}}_h =
    \begin{bmatrix}
        \widebar{\bm{V}}_N \\ \bm{V}_b
    \end{bmatrix}\csp \bm{f}_b^* = \begin{bmatrix}
        \bm{f}_{EC}(\bm{u}_{1,1},\bm{u}_{1,1}^+)  \\
         \bm{f}_{EC}(\bm{u}_{N_p,K},\bm{u}_{N_p,K}^+)
    \end{bmatrix}
\end{equation*}
Then, we can construct the following hyper-reduced entropy conservative ROM
\begin{equation}
\begin{split} 
   & \widebar{\bm{M}}_N\f{\rmd\bm{u}_N}{\rmd\rmt} + 2\widebar{\bm{V}}_h^T\LRp{\widebar{\bm{Q}}_h\circ\bm{F}}\bm{1} + \bm{V}_b^T\bm{B}_b\LRp{\bm{f}_b^*-\bm{f}(\Tilde{\bm{u}}_b)} =\bm{0},\\
   & \widebar{\bm{V}}_N = \bm{V}_N(\mathcal{I},:)\csp \widebar{\bm{M}}_N = \widebar{\bm{V}}_N^T\bm{W}\widebar{\bm{V}}_N,\csp \bm{P}_N = \widebar{\bm{M}}_N^{-1}\widebar{\bm{V}}_N^T\bm{W}, \\ & \bm{v}_N = \bm{P}_N\bm{v}\LRp{\widebar{\bm{V}}_N\bm{u}_N}\csp \Tilde{\bm{v}}=\widebar{\bm{V}}_h\bm{v}_N\csp \Tilde{\bm{v}}_b=\bm{V}_b\bm{v}_N, \\ &\Tilde{\bm{u}}=\bm{u}(\Tilde{\bm{v}})\csp \Tilde{\bm{u}}_b=\bm{u}(\Tilde{\bm{v}}_b)\csp \bm{F}_{ij} =\bm{f}_{EC}(\Tilde{\bm{u}}_i,\Tilde{\bm{u}}_j).
\end{split}
    \label{eq:DG_ROM_HR_weakBC1}
\end{equation}
Using the block SBP property of $\widebar{\bm{Q}}_h$ \eqref{eq:block_SBP}, the formulation \eqref{eq:DG_ROM_HR_weakBC1} is equivalent to
\begin{equation}
   \widebar{\bm{M}}_N\f{\rmd\bm{u}_N}{\rmd\rmt} + \widebar{\bm{V}}_h^T\LRp{\LRp{\widebar{\bm{Q}}_h-\widebar{\bm{Q}}_h^T}\circ\bm{F}}\bm{1} + \bm{V}_b^T\bm{B}_b\bm{f}_b^* =\bm{0},
    \label{eq:DG_ROM_HR_weakBC2}
\end{equation}
and we have the following statement of conservation of entropy:
\begin{theorem}
\label{thm:weakBC_stability}
    Formulations \eqref{eq:DG_ROM_HR_weakBC1} and \eqref{eq:DG_ROM_HR_weakBC2} admit a semi-discrete conservation of entropy such that $$\bm{1}^T\bm{W}\f{\rmd S(\bm{V}_N\bm{u}_N)}{\rmd\rmt} - \bm{1}^T\bm{B}_b\LRp{\bm{\psi}^T(\Tilde{\bm{u}}_b)-\Tilde{\bm{v}}_b^T\bm{f}_b^*} = 0.$$
\end{theorem}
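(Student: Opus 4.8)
The plan is to reproduce the classical semi-discrete entropy-stability argument for SBP--DG schemes, now carried out at the level of the hyper-reduced operator $\widebar{\bm{Q}}_h$ and starting from the skew-symmetric form \eqref{eq:DG_ROM_HR_weakBC2}. First I would rewrite the discrete entropy rate as a contraction against the reduced entropy variables. Using $\bm{v} = \partial S/\partial\bm{u}$ and the chain rule at the hyper-reduced nodes, $\bm{1}^T\bm{W}\,\f{\rmd S(\widebar{\bm{V}}_N\bm{u}_N)}{\rmd\rmt} = \bm{v}(\widebar{\bm{V}}_N\bm{u}_N)^T\bm{W}\widebar{\bm{V}}_N\f{\rmd\bm{u}_N}{\rmd\rmt}$. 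The essential identity is that the discrete entropy projector $\bm{P}_N = \widebar{\bm{M}}_N^{-1}\widebar{\bm{V}}_N^T\bm{W}$ is self-adjoint with respect to $\bm{W}$: since $\bm{v}_N = \bm{P}_N\bm{v}(\widebar{\bm{V}}_N\bm{u}_N)$ and both $\widebar{\bm{M}}_N$ and $\bm{W}$ are symmetric, one has $\bm{v}(\widebar{\bm{V}}_N\bm{u}_N)^T\bm{W}\widebar{\bm{V}}_N = \bm{v}_N^T\widebar{\bm{M}}_N$, so the entropy rate equals $\bm{v}_N^T\widebar{\bm{M}}_N\f{\rmd\bm{u}_N}{\rmd\rmt}$. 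Substituting \eqref{eq:DG_ROM_HR_weakBC2} for $\widebar{\bm{M}}_N\f{\rmd\bm{u}_N}{\rmd\rmt}$ then splits the rate into a volume flux term and a boundary term.

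Next I would treat the volume term with the standard ``Tadmor shuffle''. Setting $\widebar{\bm{S}}_h = \widebar{\bm{Q}}_h - \widebar{\bm{Q}}_h^T$ and using $\bm{v}_N^T\widebar{\bm{V}}_h^T = \Tilde{\bm{v}}^T$, the volume contribution is $-\Tilde{\bm{v}}^T\LRp{\widebar{\bm{S}}_h\circ\bm{F}}\bm{1}$. The point that makes the entropy flux identity usable is that the entropy projection forces $\Tilde{\bm{v}}_i = \bm{v}(\Tilde{\bm{u}}_i)$ exactly: since $\Tilde{\bm{u}} = \bm{u}(\Tilde{\bm{v}})$ and the entropy map is bijective, each flux entry $\bm{F}_{ij} = \bm{f}_{EC}(\Tilde{\bm{u}}_i,\Tilde{\bm{u}}_j)$ is the entropy-conservative flux between states whose entropy variables are precisely $\Tilde{\bm{v}}_i,\Tilde{\bm{v}}_j$. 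Expanding the Hadamard/quadratic form entrywise and using skew-symmetry of $\widebar{\bm{S}}_h$ together with symmetry of $\bm{f}_{EC}$, I would symmetrize to $\f{1}{2}\sum_{i,j}\LRp{\widebar{\bm{S}}_h}_{ij}(\Tilde{\bm{v}}_i-\Tilde{\bm{v}}_j)^T\bm{f}_{EC}(\Tilde{\bm{u}}_i,\Tilde{\bm{u}}_j)$ and then apply the entropy-conservation property in \eqref{eq:fluxCondition} to replace $(\Tilde{\bm{v}}_i-\Tilde{\bm{v}}_j)^T\bm{f}_{EC}$ by the potential difference $\psi(\Tilde{\bm{u}}_i)-\psi(\Tilde{\bm{u}}_j)$. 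A telescoping argument, again using skew-symmetry, collapses the double sum to $\bm{\psi}(\Tilde{\bm{u}})^T\widebar{\bm{S}}_h\bm{1}$.

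The last step, and the one needing the most careful bookkeeping, is to evaluate $\widebar{\bm{S}}_h\bm{1}$ and match the surviving volume piece against the boundary flux term. The zero-row-sum property gives $\widebar{\bm{Q}}_h\bm{1} = \bm{0}$, and the block SBP property \eqref{eq:block_SBP} gives $\widebar{\bm{Q}}_h^T\bm{1} = \bm{B}_h\bm{1}$, so $\widebar{\bm{S}}_h\bm{1} = -\bm{B}_h\bm{1}$. Because $\bm{B}_h$ is supported only on its boundary block $\bm{B}_b$, the volume contribution reduces to $+\bm{1}^T\bm{B}_b\bm{\psi}^T(\Tilde{\bm{u}}_b)$. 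The boundary term, using $\bm{v}_N^T\bm{V}_b^T = \Tilde{\bm{v}}_b^T$, is $-\bm{1}^T\bm{B}_b\LRp{\Tilde{\bm{v}}_b^T\bm{f}_b^*}$, and adding the two boundary pieces produces exactly $\bm{1}^T\bm{B}_b\LRp{\bm{\psi}^T(\Tilde{\bm{u}}_b)-\Tilde{\bm{v}}_b^T\bm{f}_b^*}$, which rearranges into the claimed balance. I expect the main obstacle to lie precisely at the interface between the entropy projection and the flux algebra: verifying that $\Tilde{\bm{v}} = \bm{v}(\Tilde{\bm{u}})$ is what licenses the entropy-conservation identity, and then tracking the signs so that the row-sum and block-SBP structure deposit the entropy potential exactly on the boundary nodes with the correct orientation inherited from $\bm{B}_b$.
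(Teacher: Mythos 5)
Your proposal is correct and follows exactly the route the paper indicates (and defers to Chan et al.\ for): testing the skew-symmetric form \eqref{eq:DG_ROM_HR_weakBC2} with $\bm{v}_N$, using that the entropy projection guarantees $\Tilde{\bm{v}} = \bm{v}(\Tilde{\bm{u}})$ so the flux-differencing sum telescopes to $\bm{\psi}^T\bigl(\widebar{\bm{Q}}_h-\widebar{\bm{Q}}_h^T\bigr)\bm{1}$, and then invoking $\widebar{\bm{Q}}_h\bm{1}=\bm{0}$ together with the block SBP property \eqref{eq:block_SBP} to deposit the entropy potential on the boundary block. Your write-up simply makes explicit the steps the paper leaves to the cited reference; the sign bookkeeping and the identity $\bm{v}_N^T\widebar{\bm{M}}_N = \bm{v}(\widebar{\bm{V}}_N\bm{u}_N)^T\bm{W}\widebar{\bm{V}}_N$ are both handled correctly.
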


\begin{proof}
    The proof is identical to the weakly-imposed boundary condition hyper-reduced ROM in \cite{Chan20ROM} by testing \eqref{eq:DG_ROM_HR_weakBC2} with $\bm{v}_N$ and using $\widebar{\bm{Q}}_h\bm{1}=\bm{0}$ and the block SBP property of $\widebar{\bm{Q}}_h$. 

\begin{remark}
  If the boundary condition imposition is entropy stable such that $\bm{1}^T\bm{B}_b\LRp{\bm{\psi}^T(\Tilde{\bm{u}}_b)-\Tilde{\bm{v}}_b^T\bm{f}_b^*} \leq 0$, then the ROM is entropy stable such that 
  \begin{equation}
  \bm{1}^T\bm{W}\f{\rmd S(\bm{V}_N\bm{u}_N)}{\rmd\rmt}\leq0.
  \label{eq:entropy_dissip}
  \end{equation}
  Moreover, if the boundary condition imposition is entropy conservative such that $\bm{\psi}^T(\Tilde{\bm{u}}_b) - \Tilde{\bm{v}}_b^T\bm{f}_b^* = 0$, the inequality \eqref{eq:entropy_dissip} becomes and equality.
\end{remark}

    \end{proof}

\section{Entropy conservative DG scheme on high-dimensional domains}
\label{sec:HDdomains}
For nonlinear conservation laws \eqref{eq:NCL}, extending the DG FOM from 1D to higher dimensions, $d>1$, involves adding contributions from each additional dimension. We focus on the generailized formulation with weakly-imposed boundary conditions in this section.
\subsection{Full order model}


For multi-dimensional domains $\Go \subset \mathbb{R}^d$, we can extend the FOM from \eqref{eq:DG_1D_FOM_weakBC} to 
\begin{equation}
    \bm{M}_\Go\f{\rmd\bm{u}_\Go}{\rmd\rmt} + \sumid\LRp{2\LRp{\bm{Q}^i_\Go\circ\bm{F}^i}\bm{1} + \bm{B}^i_\Go\bm{f}^{i,*}_\Go} = \bm{0},
    \label{eq:DG_HD_FOM_weakBC}
\end{equation}
where now, for the $i$th coordinate, $\bm{Q}_\Go^i$, $\bm{F}^i$ are the global differentiation and flux matrices and $\bm{f}^{i,*}_\Go$ is a corresponding global boundary flux matrix. Because explicit matrix expressions for $\bm{Q}_\Go^i$ and $\bm{B}^i_\Go$ are more cumbersome, we instead prove matrix properties by appealing to properties of the DG weak formulation. 

Suppose we have two arbitrary functions $u,v$ in the DG approximation space with coefficients $\bm{u}$ and $\bm{v}$. Define the exterior values of $u$ on a neighboring element across an element interface as $u^+$. On the boundary $\partial \Omega$, we use the convention that $u^+ = u$. Finally, we define the jump and average as follows:
\[
\avg{u} = \frac{1}{2}\LRp{u^+ + u}, \qquad \jump{u} = u^+ - u.
\]
We can now define $\bm{Q}_\Go^i$ as the matrix which satisfies:
\begin{equation}
\bm{v}^T\bm{Q}_\Go^i\bm{u} = \sum_k\LRp{\int_{D^k}\f{\p u}{\p x^i}v + \int_{\p D^k}\f{1}{2}\llbracket{u}\rrbracket{n}^i{v}}.
\label{eq:DG_HD_Q}
\end{equation}
Similarly, $\bm{B}^i_\Go$ is defined as the matrix which satisfies:
\begin{equation}
    \bm{v}^T\bm{B}_\Go^i\bm{u}=\int_{\partial \Go} v u n^i,
    \label{eq:DG_HD_B}
\end{equation} where $\bm{n}^i$ is the outward normal vector along $i$-th coordinate and all non-bold symbols represent the scalar values of the corresponding vectors over each element $D^k$. In practice, we can construct $\bm{Q}_\Go^i$ by applying $\bm{Q}_\Go^i$ to canonical basis vectors and computing each entry $$\LRp{\bm{Q}^i_\Go}_{j,k} = \bm{e}_j^T\bm{Q}_\Go^i\bm{e}_k.$$

\begin{theorem}
\label{thm:DG_HD_SBP}
Let $\bm{Q}^i_\Go$ satisfy \eqref{eq:DG_HD_Q} and $\bm{B}^i_\Go$ satisfy \eqref{eq:DG_HD_B}. Then $\bm{Q}^i_\Go\bm{1} = \bm{0}$, and $\bm{Q}^i_\Go$ satisfies the SBP property $\bm{Q}_\Go^i + \LRp{\bm{Q}_\Go^i}^T = \bm{B}^i_\Go$. Moreover, $\bm{Q}^i_\Go$ is skew-symmetric for periodic domains.
\end{theorem}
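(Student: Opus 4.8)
The plan is to verify all three claims by testing the defining bilinear forms \eqref{eq:DG_HD_Q} and \eqref{eq:DG_HD_B} against arbitrary coefficient vectors $\bm{u}$ and $\bm{v}$, thereby reducing the matrix identities to integral identities over the DG approximation space.

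First I would establish the zero row sum property. Setting $\bm{u} = \bm{1}$ corresponds to the globally constant function $u \equiv 1$, for which $\f{\p u}{\p x^i} = 0$ and $\jump{u} = u^+ - u = 0$ on every face (including $\p\Go$, where the convention $u^+ = u$ also forces the jump to vanish). Both terms in \eqref{eq:DG_HD_Q} then vanish, so $\bm{v}^T \bm{Q}_\Go^i \bm{1} = 0$ for all $\bm{v}$, giving $\bm{Q}_\Go^i \bm{1} = \bm{0}$.

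For the SBP property, I would compute $\bm{v}^T\LRp{\bm{Q}_\Go^i + \LRp{\bm{Q}_\Go^i}^T}\bm{u}$ by applying \eqref{eq:DG_HD_Q} once as written and once with the roles of $u$ and $v$ interchanged, since $\bm{v}^T\LRp{\bm{Q}_\Go^i}^T\bm{u} = \bm{u}^T\bm{Q}_\Go^i\bm{v}$. The two volume terms combine into $\int_{D^k}\f{\p(uv)}{\p x^i}$, which elementwise integration by parts converts into the surface integral $\int_{\p D^k} uv\, n^i$. Adding the two jump surface terms and using $\jump{u} = u^+ - u$, the per-face integrand collapses via the identity $uv + \f{1}{2}\LRp{\jump{u}v + \jump{v}u} = \f{1}{2}\LRp{u^+ v + v^+ u}$. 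The crux is then a face-by-face cancellation argument: on an interior face shared by two elements, the contribution from one side carries outward normal $n^i$ with interior traces $u,v$ and exterior traces $u^+,v^+$, while the neighbor contributes the same expression with interior and exterior traces swapped and opposite normal $-n^i$, so the two annihilate exactly. Only physical boundary faces survive, and there the convention $u^+ = u$, $v^+ = v$ reduces the integrand to $uv\, n^i$, yielding $\int_{\p\Go} uv\, n^i = \bm{v}^T\bm{B}_\Go^i\bm{u}$ by \eqref{eq:DG_HD_B}. Since this holds for all $\bm{u}, \bm{v}$, the SBP identity follows. Skew-symmetry on periodic domains is then immediate: there are no physical boundary faces, so every face participates in the interior cancellation and $\bm{B}_\Go^i = \bm{0}$, whence $\bm{Q}_\Go^i + \LRp{\bm{Q}_\Go^i}^T = \bm{0}$.

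I expect the main obstacle to be the interior-face cancellation bookkeeping: correctly pairing the two one-sided contributions and tracking both the sign flip of the normal and the interchange of interior and exterior trace values, so that the surviving terms are exactly the boundary integral. The algebraic collapse of the per-face integrand is routine, but keeping the orientation conventions consistent across neighboring elements is where care is required.
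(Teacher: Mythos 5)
Your proposal is correct and follows essentially the same route as the paper: both arguments test the defining bilinear forms against arbitrary coefficient vectors, use elementwise integration by parts on the volume term, and pair the two one-sided contributions on each interior face (using $n^{i,+}=-n^i$ and the swap of interior/exterior traces) so that only the physical boundary integral survives, which is identified with $\bm{B}_\Go^i$ via \eqref{eq:DG_HD_B}. The only difference is organizational — you symmetrize by adding $\bm{v}^T\bm{Q}_\Go^i\bm{u}$ and $\bm{u}^T\bm{Q}_\Go^i\bm{v}$ before integrating by parts, which makes the face cancellation slightly cleaner than the paper's one-sided manipulation, but the underlying decomposition and key facts are identical.
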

\begin{proof}
    To see $\bm{Q}^i_\Go\bm{1} = \bm{0}$, we look at each row 
$$ (\bm{Q}^i_\Go)_j\bm{1} = \bm{e}_j^T\bm{Q}^i_\Go\bm{1}=\sum_k\LRp{\int_{D^k}\f{\p {1}}{\p {x}^i}{e}_j + \int_{\p D^k}\f{1}{2}\llbracket{1}\rrbracket{n}^i{e}_j} = 0.$$

To prove the SBP property, we use integration by parts to manipulate \eqref{eq:DG_HD_Q}:
\begin{equation*}
    \begin{split}
        \bm{v}^T\bm{Q}_\Go^i\bm{u} & = \sum_k\LRp{\int_{D^k}\f{\p {u}}{\p {x}^i}{v} + \int_{\p D^k}\f{1}{2}\llbracket{u}\rrbracket{n}^i{v}} \\
        & = \sum_k\LRp{-\int_{D^k}{u}\f{\p {v}}{\p {x}^i} + \int_{\p D^k}{u}{n}^i{v}+\f{1}{2}\llbracket{u}\rrbracket{n}^i{v}} \\
        & = \sum_k\LRp{-\int_{D^k}{u}\f{\p {v}}{\p {x}^i} + \int_{\p D^k}\f{1}{2}\avg{u}{n}^i{v}}.
    \end{split}
\end{equation*}
Since the sum is over all elements, each inter-element interface is associated with two surface integrals. Let $f$ denote an inter-element interface between $D^k$ and a neighbor $D^{k,+}$. Then,
\begin{align*}
        \sum_k \int_{\p D^k}\avg{u}{n}^i{v} &= \sum_f\LRp{ \int_{f \cap D^k}\avg{u}n^iv + \int_{f \cap D^{k,+}}\avg{u} n^{i,+} v^+} + \int_{\partial \Omega} u n^i v,
\end{align*}
where the sum is now taken over all inter-element interfaces and boundary faces $f$. Noting that $n^{i,+} = -n^i$ and exchanging terms yields that the sum over interior faces is equivalent to
\begin{align*}
\sum_f \int_{f \cap D^k}\avg{u}n^iv + \int_{f \cap D^{k,+}}\avg{u} n^{i,+} v^+ &= \sum_f\int_{f \cap D^k} u n^i\frac{1}{2}\LRp{v^+-v} + \int_{f \cap D^{k,+}}\avg{u} n^{i,+} \frac{1}{2}\LRp{v - v^+}.
\end{align*}
Thus, the sum of surface contributions yields 
\[
\sum_k \int_{\p D^k}\avg{u}{n}^i{v} = \sum_k \int_{\partial D^k} un^i \jump{v} + \int_{\partial \Omega} u n^i v.
\]
This implies that
$$\bm{v}^T\bm{Q}_\Go^i\bm{u} = \sum_k\LRp{-\int_{D^k}{u}\f{\p {v}}{\p {x}_i} -\int_{\p D^k} \f{1}{2} \llbracket{v}\rrbracket{n}^i{u} } + \int_{\partial \Omega} u n^i v = -\bm{u}^T\LRp{\bm{Q}_\Go^i + \bm{B}^i_\Go}\bm{v}.$$
For periodic domains, the integral over $\partial \Omega$ vanishes since boundary faces are also treated as inter-element interfaces, yielding a skew-symmetric operator $\bm{Q}^i_\Go$.

\label{pf:DG_HD_Q}
\end{proof}

Again, following \cite{Chen17}, the formulation \eqref{eq:DG_HD_FOM_weakBC} is entropy conservative if we use entropy conservative flux for boundary terms under appropriate boundary conditions.

\begin{remark}
The proof of Theorem~\ref{thm:DG_HD_SBP} assumes that integration by parts holds along each dimension. This is satisfied even under inexact $(N+1)$ point Gauss-Lobatto quadrature \cite{kopriva2010quadrature}, which is used in the FOMs considered in this work.
\end{remark}

\subsection{Volume hyper-reduction}

\par The volume flux terms of \eqref{eq:DG_HD_FOM_weakBC} after applying Galerkin projection are
\begin{equation}
\sum_{i=1}^{d}\LRp{2\bm{V}_N^T\LRp{\bm{Q}_\Go^i\circ\bm{F}^i}\bm{1}} \csp \bm{F}^i_{jk} = \bm{f}_{EC}^i(\Tilde{\bm{u}}_j,\Tilde{\bm{u}}_k)\csp \Tilde{\bm{u}} = \bm{u}\LRp{\bm{V}_N\bm{V}_N^\dagger\bm{v}(\bm{V}_N\bm{u}_N)}. 
    \label{eq:DG_HD_vol}
\end{equation}
Our volume hyper-reduction approach is to hyper-reduce each coordinate separately, akin to the 1D case, using the same quadrature rule in each coordinate direction. This quadrature rule is generated by targeting $\bm{V}_N*\bm{V}_N$ to yield hyper-reduced nodes $\mathcal{I}$ and weights $\bm{w}$.
\par Now, given the basis matrix $\bm{V}_N$, we can introduce a test basis $\bm{V}_t^i$ for each dimension. We assume that the spans of $\bm{V}_t^i$ include the ranges of $\bm{V}_N$ and $\bm{M}_\Go^{-1}\LRp{\bm{Q}_\Go^i}^T\bm{V}_N$ by imposing the condition,
\begin{equation}
  \mathcal{R}\LRp{\bm{V}_t^i} = \mathcal{R}\LRp{\begin{bmatrix}
           \bm{1} &  \bm{V}_N & \bm{M}_\Go^{-1}\LRp{\bm{Q}_\Go^i}^T\bm{V}_N
    \end{bmatrix}} \csp i=1,...,d.
    \label{eq:test_basis_2d}
\end{equation}
We note that, while the test basis varies depending on the coordinate dimension $i$, the rank of the test basis is at most $2N+1$. 

Next, we compute projection matrices for each dimension $$\widebar{\bm{V}}_t^i = \bm{V}_t^i\LRp{\mathcal{I},:}\csp\bm{W} = \text{diag}\LRp{\bm{w}}\csp\bm{P}_t^i = \LRp{\widebar{\bm{V}}^i_t\bm{W}\widebar{\bm{V}}^i_t}^{-1}\LRp{\widebar{\bm{V}}^i_t}^T\bm{W}.$$ These matrices can be used to construct hyper-reduced nodal differentiation matrices  $$\widebar{\bm{Q}}^i = \LRp{\bm{P}_t^i}^T\LRp{\bm{V}_t^i}^T\bm{Q}_\Go^i\bm{V}_t^i\bm{P}_t^i,$$ where each $\widebar{\bm{Q}}^i$ is skew-symmetric and has zero row sums. After this, the flux terms in \eqref{eq:DG_HD_FOM_weakBC} are evaluated using entropy-projected conservative variables interpolated independently for each coordinate.



We note that, if we target other space like $\bm{V}_N*\bm{V}_t$ or $\bm{V}_t * \bm{V}_t$ for hyper-reduction on each dimension, then it results in distinct positive quadrature rules for each coordinate with different sets of hyper-reduced nodes $\mathcal{I}^i$ and weights $\bm{w}^i$. We do not consider this strategy in this paper, as this strategy results in an increased number of hyper-reduced nodes but not a significant improvement in accuracy in our experiments. 

\subsection{Boundary hyper-reduction}

In 2D and 3D, hyper-reduction is required not just for volume terms, but also for boundary terms that appear in \eqref{eq:DG_HD_FOM_weakBC}. However, as discussed in \cite{Chan20ROM}, the boundary hyper-reduction must preserve certain properties involving $\bm{Q}_\Go^i$ and $\bm{B}_\Go^i$ to guarantee entropy conservation or stability. We note that it is possible to apply boundary hyper-reduction independently of the volume hyper-reduction. 

\par We first describe how to construct an entropy conservative or entropy stable ROM formulation in the presence of boundary hyper-reduction, which is not necessary in 1D. Denote $\bm{V}_b$ and $\bm{V}_{bt}$ as the boundary sub-matrices of the POD basis and test basis respectively, such that their columns correspond to values of basis functions at boundary nodes. Let $\mathcal{I}_b$ be a sub-sampled set of boundary nodes, $\bm{B}_b^i$ be the diagonal matrix whose entries consist of coordinate values of outward normals on the boundary scaled by quadrature weights, and let $\bm{E}^i$ be the matrix which extrapolates from hyper-reduced volume points to hyper-reduced boundary points for $i$th coordinate:
\begin{equation}
\widebar{\bm{B}}_b^i=\text{diag}(\bar{\bm{n}}_b^i \circ \bar{\bm{w}}_b)\csp \widebar{\bm{V}}_{b}^i = \bm{V}_b^i(\mathcal{I}_b,:)\csp \widebar{\bm{V}}_{bt}^i = \bm{V}_{bt}^i(\mathcal{I}_b,:)\csp \widebar{\bm{E}}^i=\widebar{\bm{V}}_{bt}^i\bm{P}_t^i,
\label{eq:boundary_HR_ops}
\end{equation}
where $\circ$ is the Hadamard (elementwise) product of two vectors, $\bar{\bm{n}}_b^i$ is the vector of values of the $i$th coordinate of the outward normal at hyper-reduced boundary nodes, $\bar{\bm{w}}_b$ and $\bm{V}_{bt}^i$ is a sub-matrix of $\bm{V}_t^i$ at boundary nodes. Note that $\bm{E}^i$ is defined in terms of the projection $\bm{P}_t^i$ onto the $i$th test basis.

\par We introduce the hybridized SBP operator \cite{Chan18}  along the $i$th coordinate:
\begin{equation*}
    \widebar{\bm{Q}}_h^i = \begin{bmatrix}
        \widebar{\bm{Q}}^i-\LRp{\widebar{\bm{Q}}^i}^T & \LRp{\widebar{\bm{E}}^i}^T\widebar{\bm{B}}_b^i \\
        -\widebar{\bm{B}}_b^i\widebar{\bm{E}}^i & \widebar{\bm{B}}_b^i
    \end{bmatrix},
\end{equation*}
which can be used to construct a hyper-reduced ROM in higher dimensions following \cite{Chan20ROM}:
\begin{equation}
\begin{split}
   & \widebar{\bm{M}}_N\f{\rmd\bm{u}_N}{\rmd\rmt} + \sumid\LRp{2\LRp{\widebar{\bm{V}}_h}^T\LRp{\widebar{\bm{Q}}_h^i\circ\bm{F}^i}\bm{1} + \LRp{\widebar{\bm{V}}_{b}}^T\widebar{\bm{B}}_b^i\LRp{\bm{f}_b^{i,*}-\bm{f}^i(\Tilde{\bm{u}}_b)}} =\bm{0}, \\
     &\widebar{\bm{V}}_N = \bm{V}_N\LRp{\mathcal{I},:}\csp \widebar{\bm{V}}_h = \begin{bmatrix}
       \widebar{\bm{V}}_N  \\ \bm{V}_b
   \end{bmatrix}\csp \widebar{\bm{V}}_{b} = \bm{V}_b(\mathcal{I}_b,:),\\
      & \widebar{\bm{M}}_N = \widebar{\bm{V}}_N^T\bm{W}\widebar{\bm{V}}_N\csp \bm{P}_N = \widebar{\bm{M}}_N^{-1}\widebar{\bm{V}}_N^T\bm{W}\csp \bm{v}_N = \bm{P}_N\bm{v}\LRp{\widebar{\bm{V}}_N\bm{u}_N},\\
   & \Tilde{\bm{v}}=\widebar{\bm{V}}_N\bm{v}_N\csp \Tilde{\bm{u}}=\bm{u}\LRp{\Tilde{\bm{v}}}\csp\LRp{\bm{F}^i}_{jk}=\bm{f}_{EC}^i\LRp{\Tilde{\bm{u}}_j,\Tilde{\bm{u}}_k}. 
\end{split}
    \label{eq:DG_ROM_HD_HR_weakBC1}
\end{equation}
By the block SBP property of $\widebar{\bm{Q}}_h^i$, we can show that formulation \eqref{eq:DG_ROM_HD_HR_weakBC1} is equivalent to
\begin{equation}
    \widebar{\bm{M}}_N\f{\rmd\bm{u}_N}{\rmd\rmt} + \sumid\LRp{\LRp{\widebar{\bm{V}}_h^i}^T\LRp{\LRp{\widebar{\bm{Q}}_h^i-\LRp{\widebar{\bm{Q}}_h}^T}\circ\bm{F}^i}\bm{1} + \LRp{\widebar{\bm{V}}_{b}}^T\widebar{\bm{B}}_b^i\bm{f}_b^{i,*}} =\bm{0},
    \label{eq:DG_ROM_HD_HR_weakBC2}
\end{equation}
which is often more convenient to implement in practice. 

\subsection{Sufficient conditions for entropy conservation}

In \cite{Chan20ROM}, it was shown that the proof of entropy conservation in \autoref{thm:weakBC_stability} could be extended to formulation \eqref{eq:DG_ROM_HD_HR_weakBC2} if $\widebar{\bm{Q}}_h^i\bm{1}=\bm{0}$. Expanding out this expression yields: 
\begin{equation}
    \widebar{\bm{Q}}_h^i\bm{1} = \begin{bmatrix}
        \widebar{\bm{Q}}^i\bm{1}-\LRp{\widebar{\bm{Q}}^i}^T\bm{1}+\LRp{\widebar{\bm{E}}^i}^T\widebar{\bm{B}}_b^i\bm{1} \\ \bm{0}
    \end{bmatrix} = \begin{bmatrix}
        -\LRp{\widebar{\bm{Q}}^i}^T\bm{1}+\LRp{\widebar{\bm{E}}^i}^T\widebar{\bm{B}}_b^i\bm{1} \\ \bm{0}
    \end{bmatrix}. 
    \label{eq:conservation_nonperiodic}
\end{equation}
Since we wish to enforce that the right hand side of \eqref{eq:conservation_nonperiodic} reduces to $\bm{0}$,  when implementing hyper-reduction for the boundary terms, we will impose constraints on the boundary weights $\bm{w}_b$ such that $$\LRp{\widebar{\bm{Q}}^i}^T\bm{1} = \LRp{\widebar{\bm{E}}^i}^T\widebar{\bm{B}}_b^i\bm{1}.$$ 
Recall in \eqref{eq:boundary_HR_ops} that the boundary weights appear through $\widebar{\bm{B}}_b^i = \text{diag}(\bm{n}_b^i \circ \bm{w}_b)$. We can eliminate the dependence of \eqref{eq:boundary_HR_ops} on the volume hyper-reduced matrix $\widebar{\bm{Q}}^i$ by observing that:
\begin{equation*}
    \begin{split}
        &\LRp{\widebar{\bm{Q}}^i}^T\bm{1} = \LRp{\widebar{\bm{E}}^i}^T\widebar{\bm{B}}_b^i\bm{1} \\
       & \iff\bm{1}^T\widebar{\bm{Q}}^i = \bm{1}^T\widebar{\bm{B}}_b^i\widebar{\bm{E}}^i \\
        &\iff \bm{1}^T(\bm{V}_t^i\bm{P}_t^i)^T\bm{Q}_\Go^i\bm{V}_t^i\bm{P}_t^i = \bm{1}^T\widebar{\bm{B}}_b^i\widebar{\bm{V}}_{bt}^i\bm{P}_t^i.
    \end{split}
\end{equation*}
Using the fact that $\bm{V}_t^i\bm{P}_t^i=\bm{1}$ then yields that the following conditions are equivalent:
\[
\LRp{\widebar{\bm{Q}}^i}^T\bm{1} = \LRp{\widebar{\bm{E}}^i}^T\widebar{\bm{B}}_b^i\bm{1}  \iff \bm{1}^T\bm{Q}_\Go^i\bm{V}_t^i  = \bm{1}^T\widebar{\bm{B}}_b^i\widebar{\bm{V}}_{bt}^i.
\]
Hence, enforcing 
\begin{equation}
\bm{1}^T\widebar{\bm{B}}_b^i\widebar{\bm{V}}_{bt}^i = \bm{1}^T\bm{Q}_\Go^i\bm{V}_t^i
\label{eq:boundary_HR_condition}
\end{equation}
is sufficient to preserve entropy conservation.

\subsection{Carath\'eodory pruning}
\par To enforce \eqref{eq:boundary_HR_condition}, we introduce a method which we refer to as Carath\'eodory pruning which allows us to hyper-reduce the boundary terms while preserving both positivity of the boundary weights and condition \eqref{eq:boundary_HR_condition}. Moreover, unlike the LP-based hyper-reduction algorithm used in \cite{Chan20ROM, Yano19}, if $N$ moment conditions are enforced, then Carath\'eodory pruning yields exactly $N$ hyper-reduced nodes. 

Carath\'eodory's theorem in convex analysis can be used to conclude that, for any $M$-point positive quadrature rule which is exact for a space spanned by $\text{span}\{v_1,...,v_N\}$, we can always generate a new $N$-point interpolatory positive rule to preserve all moments \cite{vandenBos20}. More precisely, suppose $M\geq N$ and for all $n=1,...,N:$ $$m_n := \int v_n(x)dx = \sum_{m=1}^Mw_mv_n(x_m).$$
This is equivalent to 
\begin{equation*}
    \bm{V}^T\bm{w} = \begin{bmatrix}
        v_1(x_1) & v_1(x_2) & \hdots & v_1(x_M) \\
        v_2(x_1) & v_2(x_2) & \hdots & v_2(x_M) \\
        \vdots & \vdots & \ddots & \vdots \\
        v_N(x_1) & v_N(x_2) & \hdots & v_N(x_M)
    \end{bmatrix} \begin{bmatrix}
        w_1 \\ w_1 \\ \vdots \\ w_M
    \end{bmatrix} = \bm{m},
\end{equation*}
with $\bm{w}\geq0$, which implies that $\bm{m}$ is in the convex hull of $\bm{0}$ and the $M$ columns of $\bm{V}^T$. Then, Carath\'eodory's theorem states that $\bm{m}$ also lies in the convex hull of a subset of $N$ columns of $\bm{V}^T$. This subset can be computed in practice using Algorithm~\ref{alg:cara_pruning} from \cite{vandenBos20} to construct a new positive quadrature rule.

\begin{algorithm}
\caption{Carath\'eodory pruning (Algorithm 1 in \cite{vandenBos20})}\label{alg:cara_pruning}
\hspace*{\algorithmicindent} \textbf{Input}: $\bm{V}^T$, $\bm{w}$.\\
\hspace*{\algorithmicindent} \textbf{Output}: $\bm{w}$, $\mathcal{I}$.
\begin{algorithmic}[1]
\State Asset error if any entry of $\bm{w}<0.$
\State Set $M,N = \text{size}(\bm{V}^T)$.
\State Return $\bm{V}^T$ and $\bm{w}$ if $M \leq N$.
\State Set $m = M-N$ and $\mathcal{I} = 1:M$.
\For{q = 1:m}
\State Determine null vector $\bm{c}$ using pivoted QR decomposition 
$$\bm{c} \gets \text{qr}(\bm{V}^T(1:N+1,:))\csp \bm{c} \gets \bm{c}(:,\text{end}).$$
\State Find the indices for all positive components 
$$\text{id} \gets \text{findall}(\bm{c}>0).$$
\State Select $\alpha$, $k$ such that $\bm{w}-\alpha\bm{c} \geq 0$ and $\bm{w}(k) = \alpha\bm{c}(k)$
$$\alpha \gets \text{min}(\bm{w}(\text{id}) ./ \bm{c}(\text{id})) \csp k \gets \argmin(\bm{w}(\text{id}) ./ \bm{c}(\text{id}))\csp k \gets \text{id}(k).$$ 
\State Remove $k$-th entry in $\bm{w}$ and $\mathcal{I}$, and remove $k$-th row in $\bm{V}^T$.
\EndFor
\State Return $\bm{w}$, $\mathcal{I}$.
\end{algorithmic}
\end{algorithm}

\par Now, we consider our setting where $N$ is the number of POD modes. Let $\phi^i_{t, j}$ denote a test basis function for the $i$th coordinate, where the test basis spans the approximation space specified in \eqref{eq:test_basis_2d} and of dimension at most $(2N+1)$. Since $\phi^i_{t, j}$ restricted to a surface face is in the DG FOM approximation space, and since the surface quadrature weights $\bm{w}_b$ correspond to a high order composite Gauss-Lobatto rule, the following exactness conditions hold in the absence of hyper-reduction: 
\begin{equation}
\bm{1}^T\bm{B}_b^i\bm{V}_{bt}^i = \int_{\partial \Omega} \phi_{t,j}^i (\bm{x}_k)\bm{n}_b^i = \sum_{k}\bm{w}_{b,j}\bm{n}^i_{b,j}\phi_{t,j}(\bm{x}_k).
\label{eq:boundary_moments}
\end{equation}
However, while we have $d$ test bases in $d$ dimensions, it is more computationally convenient to compute a single set of hyper-reduced boundary nodes and indices for all coordinate directions. We can do so by concatenating matrices for each dimension together and setting 
\begin{equation*}
     \bm{V} = \begin{bmatrix}
        \text{diag}(\bm{n}_b^1)\bm{V}_{bt}^1 & ... & \text{diag}(\bm{n}_b^d)\bm{V}_{bt}^d
    \end{bmatrix}^T.
\end{equation*}
The moment conditions \eqref{eq:boundary_moments} then can be rewritten as
\begin{equation}
\bm{w}_b^T\bm{V} = \left[ \bm{1}^T\bm{Q}^1_\Omega\bm{V}^1_t \quad \ldots \quad \bm{1}^T\bm{Q}^d_\Omega\bm{V}^d_t \right].
\label{eq:concatenated_moment_conditions}
\end{equation}
After transposing, we can apply Carath\'eodory pruning to \eqref{eq:concatenated_moment_conditions} via Algorithm~\ref{alg:cara_pruning} to construct a single set of hyper-reduced boundary weights $\bm{w}_b$ and node indices $\mathcal{I}_b$. The new boundary quadrature weights and index can subsequently be utilized to construct the requisite matrices for the hyper-reduced ROM \eqref{eq:DG_ROM_HD_HR_weakBC2} while preserving entropy conservation.

\section{Discretization of artificial viscosity}
\label{sec:viscosity}
Although entropy conservative high-order methods provide a statement of stability, such formulations produce spurious oscillations in the presence of shocks, sharp gradients, or under-resolved features. We thus implement an artificial viscosity to damp such spurious oscillations. We note that artificial viscosity is not needed to achieve stabilization, and will present numerical experiments in a later section demonstrating that an entropy conservative ROM remains stable in the presence of shocks. 


\par The artificial viscosity used in this work \eqref{eq:NCL} is a simple scalar Laplacian viscosity:
\begin{equation}
    \f{\p \bm{u}}{\p \rmt} + \sum_{i=1}^{\rmd} \f{\p \bm{f}^i(\bm{u})}{\p \bm{x}^i} = \eps \Gd \bm{u} \csp \eps \geq 0.
    \label{eq:NCL_viscous}
\end{equation} 
Our DG FOM utilizes the Bassi-Rebay-1 (BR-1) scheme \cite{bassi1997high} to discretize the Laplacian. In one dimension, the BR-1 scheme defines on each element $\bm{\sigma}  
\approx \f{\partial \bm{u}}{\partial x}$ as $$\bm{\sigma}=(J_k\bm{M})^{-1}\LRp{\bm{Q}\bm{u}+\bm{B}\avg{\bm{u}}}\csp \frac{\partial^2\bm{u}}{\partial x^2} = \pd{\bm{\sigma}}{x} \approx \LRp{J_k\bm{M}}^{-1}\LRp{\bm{Q}\bm{\sigma}+\bm{B}\avg{\bm{\sigma}}}. $$
 where $\avg{\bm{u}}, \avg{\bm{\sigma}}$ are vectors of values corresponding to the central fluxes. 

At a global level, the BR-1 discretization approximates both $\pd{\bm{u}}{x}$ and $\pd{\bm{\sigma}}{x}$ using a DG discretization with central fluxes. In other words, the coefficients for $\bm{\sigma}$ can be computed by applying the global differentiation matrix $\bm{Q}_\Go$ defined in \eqref{eq:DG_1D_FOM_weakBC} to $\bm{u}$ where $\avg{\bm{u}}$ and $\avg{\bm{\sigma}}$ can be imposed by the SBP property \eqref{eq:DG_1D_Qglobal_SBP_weakBC}. Let $\bm{\sigma}_\Go, \bm{u}_\Go$ denote the vector of coefficients for $\bm{\sigma}, \bm{u}$, respectively. Then, 
\begin{equation}
\label{eq:visc_SBP}
  \bm{\sigma}_\Go = \bm{M}_\Go^{-1}\LRp{-\bm{Q}_\Go^T\bm{u}_\Go + \bm{B}_\Go \avg{\bm{u}}}\csp \frac{\partial^2 \bm{u}_\Go}{\partial x^2} = \pd{\bm{\sigma}_\Go}{x} \approx  \bm{M}_\Go^{-1}\LRp{-\bm{Q}_\Go^T\bm{\sigma}_\Go + \bm{B}_\Go \avg{\bm{\sigma}}}.  
\end{equation}

For non-periodic domains, we impose Neumann boundary conditions on $\bm{\sigma}$ weakly by setting $\avg{\bm{u}}= \bm{u}$ and $\avg{\bm{\sigma}} = \bm{0}$ on domain boundaries. In this setting, we recover the global evaluations from \eqref{eq:visc_SBP} such that 
\[
\bm{\sigma}_\Go = \bm{M}_\Go^{-1}\LRp{-\bm{Q}_\Go^T\bm{u}_\Go + \bm{B}_\Go\bm{u}_\Go}= \bm{M}_\Go^{-1}\bm{Q}_\Go\bm{u}_\Go\csp \pd{\bm{\sigma}_\Go}{x} \approx  -\bm{M}_\Go^{-1}\bm{Q}_\Go^T\bm{\sigma}_\Go.
\] 
After multiplying through by the mass matrix, one can derive that the viscous contribution to the entropy conservative formulation \eqref{eq:DG_1D_FOM_weakBC} is as follows: 
\begin{equation}
 -\epsilon\bm{Q}^T_\Go\bm{M}_\Go^{-1}\bm{Q}_\Go \bm{u}_\Go.
    \label{eq:DG_1D_FOM_visc}
\end{equation}
After Galerkin projection, the viscous contribution to 1D ROMs is $  -\epsilon\bm{K}_N\bm{u}_N$, where the matrix $\bm{K}_N$ is defined as
\begin{equation}
\begin{split}
  \bm{K}_N = \bm{V}_N^T\bm{Q}^T_\Go\bm{M}_\Go^{-1}\bm{Q}_\Go\bm{V}_N. 
\end{split}
    \label{eq:DG_ROM_ES_visc}
\end{equation}
For higher dimensions,  
the FOM viscous term in terms of global DG matrices now have contributions from each coordinate dimension:
\begin{equation}
 - \sumid \LRp{ \epsilon \LRp{\bm{Q}_\Go^i}^T\bm{M}_\Go^{-1}\bm{Q}_\Go^i \bm{u}_\Go}.
    \label{eq:DG_HD_FOM_PBC_visc}
\end{equation}
and the corresponding viscous ROM term is $-\epsilon\bm{K}_N\bm{u}_N$, where $\bm{K}_N$ is now defined as
\begin{equation}
    \bm{K}_N = \sumid\bm{V}_N^T\LRp{\bm{Q}_\Go^i}^T\bm{M}_\Go^{-1}\bm{Q}_\Go^i\bm{V}_N.
    \label{eq:DG_HD_HR_PBC_visc}
\end{equation}

We note that, for a straightforward BR-1 discretization of the Laplacian, we are unable to prove entropy dissipation for either the FOM or ROM formulations. It is possible to construct provably entropy stable formulations of the Laplacian artificial viscosity by symmetrizing under entropy variables \cite{Chan20ROM, chan2022entropy}. Specifically, let $\bm{D}_\Go = \bm{M}_\Go^{-1} \bm{Q}_\Go$ be the global strong DG derivative operator. Then, for 1D ROM formulation, the viscous contribution can be written as:
\begin{equation}
        - \epsilon \bm{V}_N^T  \bm{D}_\Go^T \bm{H}_\Go \bm{M}_\Go \bm{D}_\Go \bm{V}_N\bm{v}_N = - \epsilon \bm{V}_N^T  \bm{Q}_\Go^T \bm{M}_\Go^{-1}\bm{H}_\Go \bm{Q}_\Go\bm{V}_N \bm{v}_N,
        \label{eq:ES_visc}
\end{equation}
where $\LRp{\bm{H}_\Go}_{ii} = \left.\pd{\bm{u}}{\bm{v}} \right |_{\bm{u} = \tilde{\bm{u}}_i} > \bm{0}$ for convex entropy function $S(\bm{u})$. This allows provably entropy dissipation from the viscosity if we test with $\bm{v}_N$
\[
-\bm{v}_N^T \bm{V}_N^T \bm{Q}_\Go^T  \bm{M}_\Go^{-1} \bm{H}_\Go \bm{Q}_\Go\bm{V}_N \bm{v}_N = -\sum_i \frac{\LRp{\bm{H}_\Go}_{ii}}{\LRp{\bm{M}_\Go}_{ii}}\LRp{\bm{Q}_\Go\bm{V}_N\bm{v}_N}_{i}^2 \leq 0.
\]
Formulation \eqref{eq:ES_visc} also extends naturally to multi-dimensional ROMs, and a hyper-reduction treatment can be applied using the same strategy as for the flux terms \cite{Chan20ROM}.

However, it was observed in \cite{Chan20ROM} that using a provably entropy stable approximation of the artificial viscosity did not result in significant differences compared to a more standard naive BR-1 discretization. Moreover, for all numerical experiments, we observe numerically that our ROM discretization \eqref{eq:DG_HD_HR_PBC_visc} is entropy dissipative, so we use the simpler approximation $\bm{K}_N$ for the remainder of this work.

\begin{remark}
We note that, while a naive discretization of the artificial viscosity is sufficient for the problems considered in this paper, it has been observed that entropy stable viscous discretizations significantly improve robustness for problems with viscous no-slip boundary conditions \cite{ chan2022entropy, parsani2015entropy, dalcin2019conservative}. These will be considered in future work. 
\end{remark}

\section{Numerical experiments}
\label{sec:NumExp}
In this section, we first compare the behavior of entropy stable ROMs based on FVM and DG FOMs on 1D linear advection, Burgers', and the compressible Euler equations. We then analyze the behavior of entropy stable DG-ROMs on benchmark problems for the compressible Euler equations in both 1D and 2D.

\subsection{Preliminaries}
\label{sec:preliminary}
All numerical errors reported in this chapter are relative $L^2$ errors between FOM and ROM solutions defined as 
$$\f{||\bm{u}_\text{FOM} - \bm{u}_\text{ROM}||_{L^2}}{||\bm{u}_\text{FOM}||_{L^2}},$$ where $$||\bm{u}_\text{FOM}||^2_{L^2} = \sum_{i=1}^d ||\bm{u}^i_\text{FOM} ||^2_{L^2} \csp ||\bm{u}_\text{FOM} - \bm{u}_\text{ROM}||^2_{L^2} = \sum_{i=1}^d ||\bm{u}^i_\text{FOM} - \bm{u}^i_\text{ROM}||^2_{L^2}.$$ The norms are computed using the associated FOM quadrature rule (e.g., Gauss-Lobatto for DG). The artificial viscous term is implemented exactly as described in Section \ref{sec:viscosity}. 

All experiments are implemented using the Julia programming language. Unless otherwise specified, we save 400 solution frames for constructing $\bm{V}_\text{snap}$ to prevent the introduction of errors due to an insufficient number of frames. We also fix the target space for hyper-reduction to be $\bm{V}_N * \bm{V}_N$. Finally, the numerical solution is evolved in time using an explicit 5-stage 4th order Runge-Kutta (RK4), 6-stage 5th order Runge-Kutta (Tsit5) \cite{TSITOURAS2011}, or a stabilized Runge-Kutta scheme (ROCK4) \cite{Abdulle2001}. All simulation codes are available at: \url{https://github.com/rayqu1126/ES_DG_ROM}.

\subsection{Comparison between FVM and DG ROMs}
In this section, we present numerical experiments on a periodic 1D domain $[-1,1]$ to compare FVM and DG ROMs under hyper-reduction. Because FVM and higher order DG methods have different numbers of degrees of freedom per element, we fix the total dimension of our FOMs to be 1024, so that higher polynomial degrees result in FOMs with coarser meshes.

\subsubsection{Linear advection equation}

To begin, we compare FVM and DG-based ROMs for the simplest conservation law: the scalar linear advection equation. In 1D, the linear advection equation is given by 
$$\f{\partial u}{\partial \rmt}+\f{\partial u}{\partial x} = 0.$$
We utilize the square entropy $S(u) = u^2/2$, such that entropy variables are the same as the conservative variables. The corresponding two-point entropy conservative flux is simply the central flux $${f}_\text{EC}(u_L,u_R) = \f{1}{2}(u_L+u_R).$$ 



We test our FOM and ROM using the initial condition $$u_0=e^{-50x^2}$$ without adding any artificial viscosity. We run the FOM until final time $T=1.0$. This setting results in a relatively fast decay of the singular values in $\bm{V}_{\text{snap}}$ (\autoref{fig_sval2}). However, we note that the singular values in general decay more slowly for general transport-dominated equations. This is also known as the large Kolmogorov $n$-width problem \cite{Ohlberger16}.

\autoref{tb:Adv1} shows the FOM error to the analytical solutions and the hyper-reduction error varying the number of nodes for polynomial degrees $p=0$ (1024 elements), $p=3$ (256 elements), and $p=7$ (128 elements). We note that $p=0$ corresponds to a FVM FOM. We observe a larger error in the FVM FOM when compared to the analytical solution. However, when using the FOM as the reference, all hyper-reduced ROMs achieve similar accuracy given a fixed number of POD modes $N$. Moreover, more hyper-reduced nodes are required for DG methods, particularly for a larger number of modes.
\begin{figure}
\centering
\noindent
\subfloat[$p=0$ (FVM)]{\includegraphics[width=.33\textwidth]{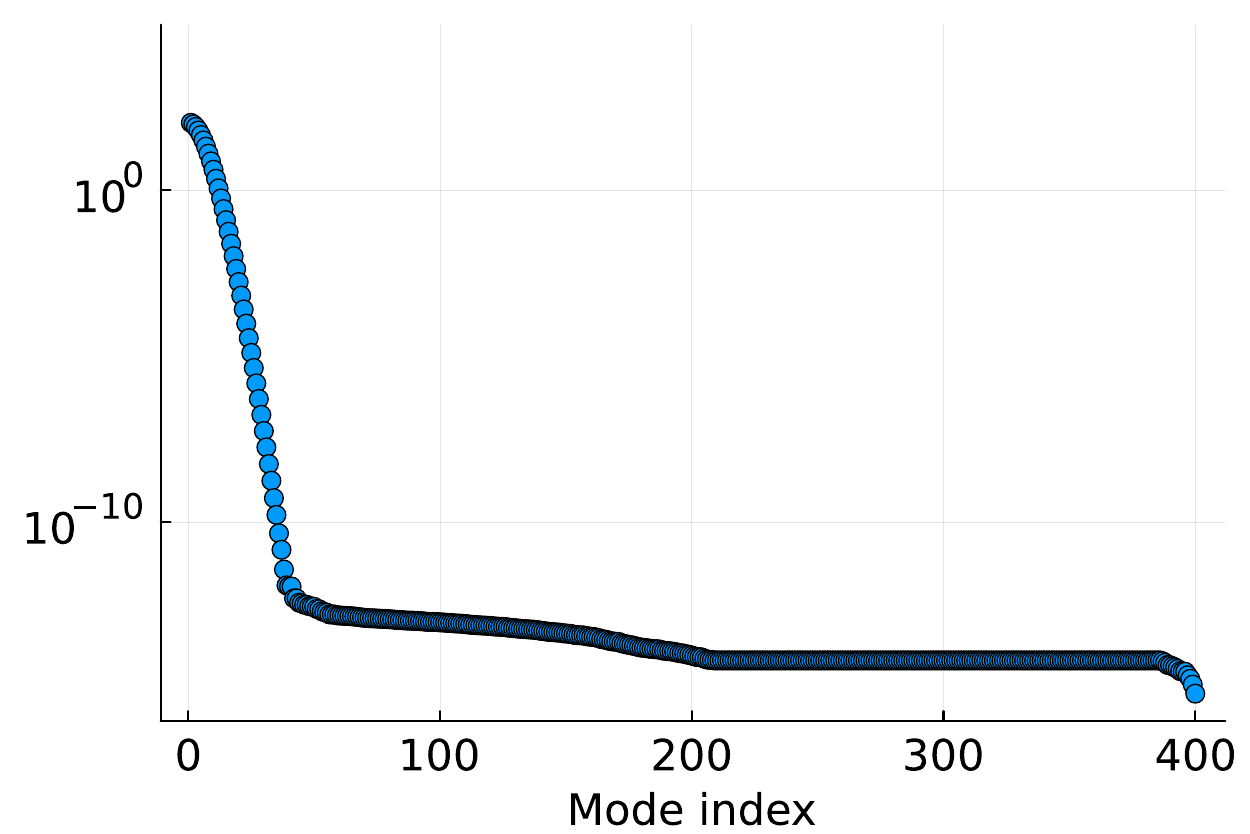}}
\subfloat[$p=3$]{\includegraphics[width=.33\textwidth]{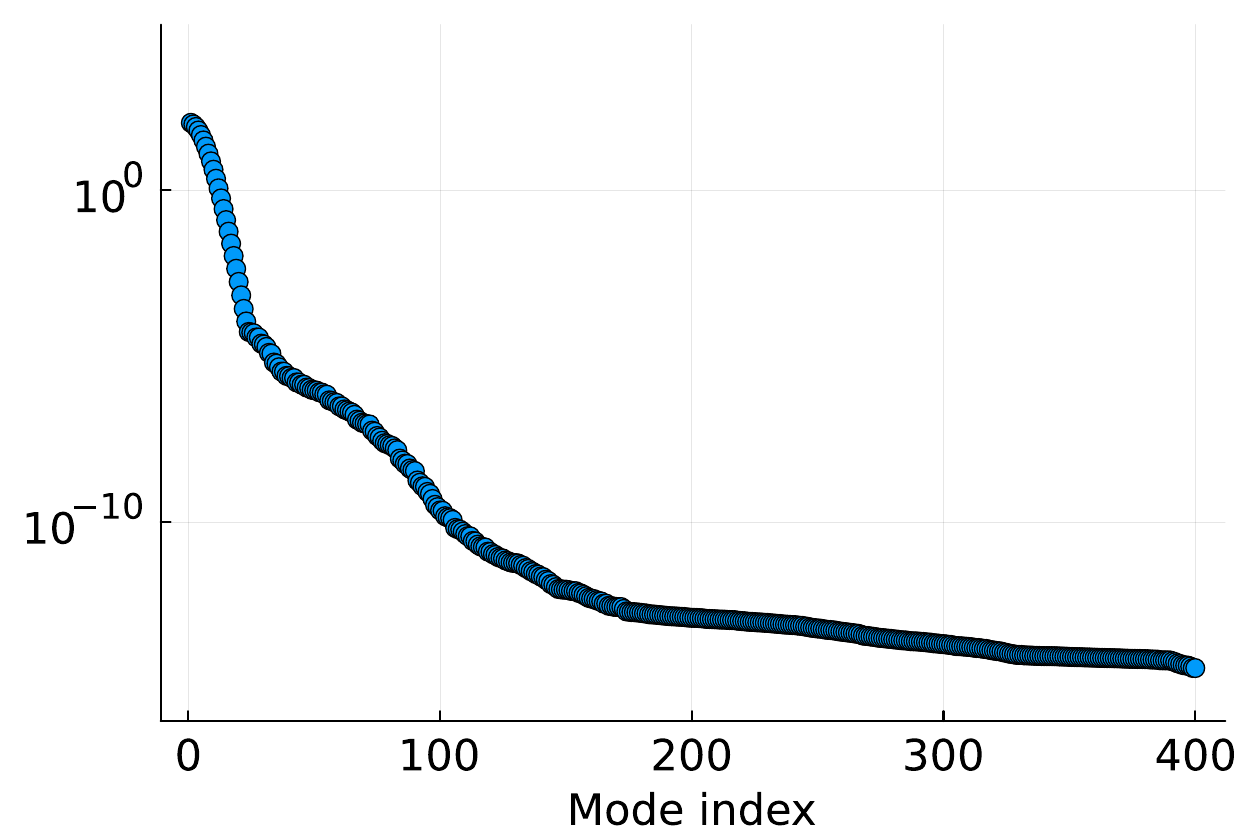}}
\subfloat[$p=7$]{\includegraphics[width=.33\textwidth]{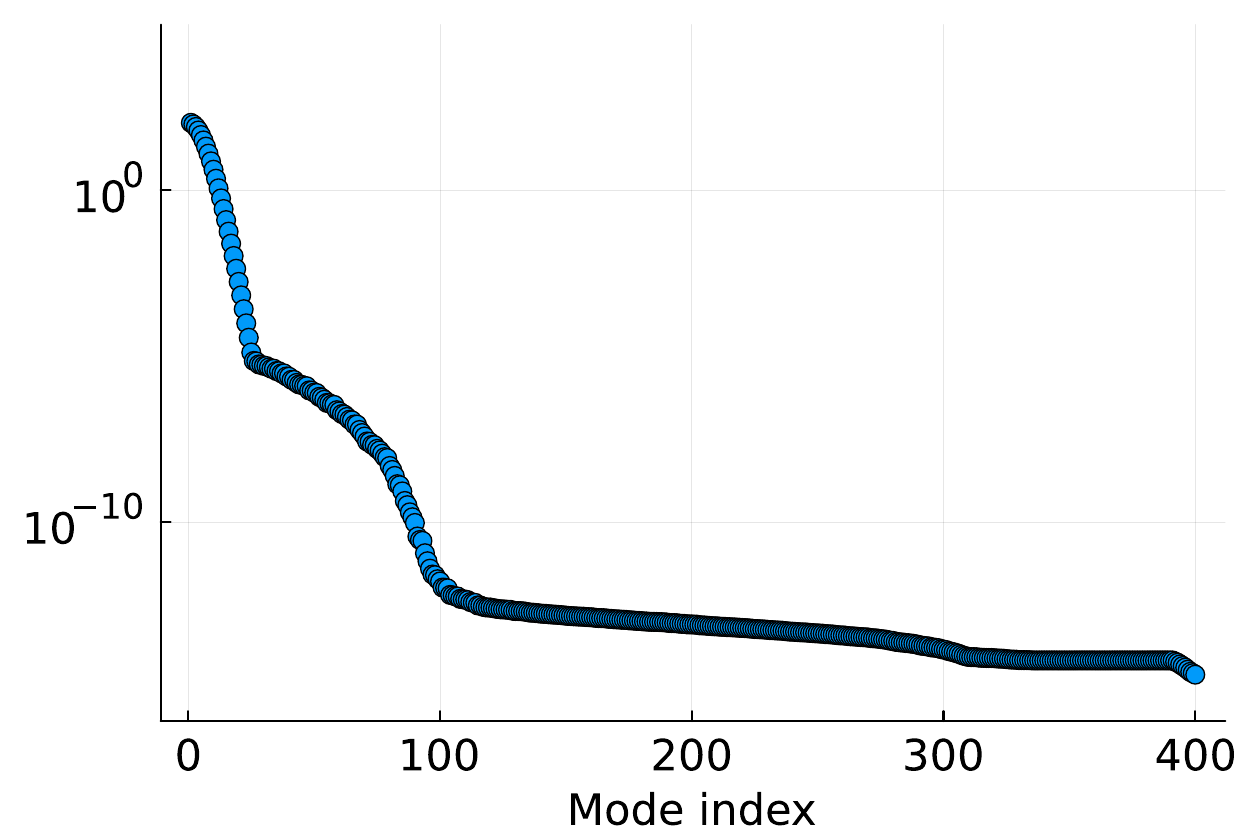}}
\caption{Singular values of $\bm{V}_\text{snap}$ in linear advection equation.}
\label{fig_sval2}
\end{figure}

\begin{table}
\begin{center}
\begin{tabular}{ | l | c | c | c | c | c | c |}
\hline
   Error/nodes& $p=0$ & $p=3$ & $p=7$ \\ \hline
  FOM (to analytical) & 8.71e-4/ 1024 &  1.00e-7/ 1024 & 5.68e-8/ 1024 \\ \hline
   $N=15$ &  1.28e-3/ 31 & 1.29e-3/ 32  &  1.29e-3/ 31 \\   \hline
  $N=20$ &  1.52e-5/ 44 & 1.55e-5/ 81 & 1.55e-5/ 68\\   \hline
  $N=25$ &  9.12e-8/ 58 &  5.55e-7/ 164 & 1.02e-7 /155 \\ \hline
\end{tabular}
\caption{Error and number of hyper-reduced nodes in linear advection equation}
\label{tb:Adv1}
\end{center}
\end{table}

\subsubsection{Burgers' equation}
\label{sec:burgers}
The next equation we investigate is Burgers' equation $$\f{\partial u}{\partial \rmt}+u\f{\partial u}{\partial x} = 0.$$ 
As with the linear-advection equation, we use the square entropy $S(u) = u^2/2$, such that the entropy variables are identical to the conservative variables $u$. The entropy conservative two-point flux is $$f_\text{EC}(u_L,u_R) = \f{1}{6}\LRp{u_L^2+u_Lu_R+u_R^2}.$$
The initial condition is set as $$u_0(x) = 0.5-\sin(\pi x),$$
which forms a traveling shock some time between $t=0.2$ and $t=0.4$. The viscosity coefficient is set as $\epsilon=1\times 10^{-2}$. We simulate until final time $T=1.0$ when the shock forms and has started propagating. The solution plot with $N=20$ can be found in \autoref{fig:testbasis}. \autoref{fig_sval3} shows the singular values, which decay more slowly in this case. We report results in \autoref{tb:Burgers1}. 

\begin{figure}
\centering
\noindent
\subfloat[$p=0$ (FVM)]{\includegraphics[width=.33\textwidth]{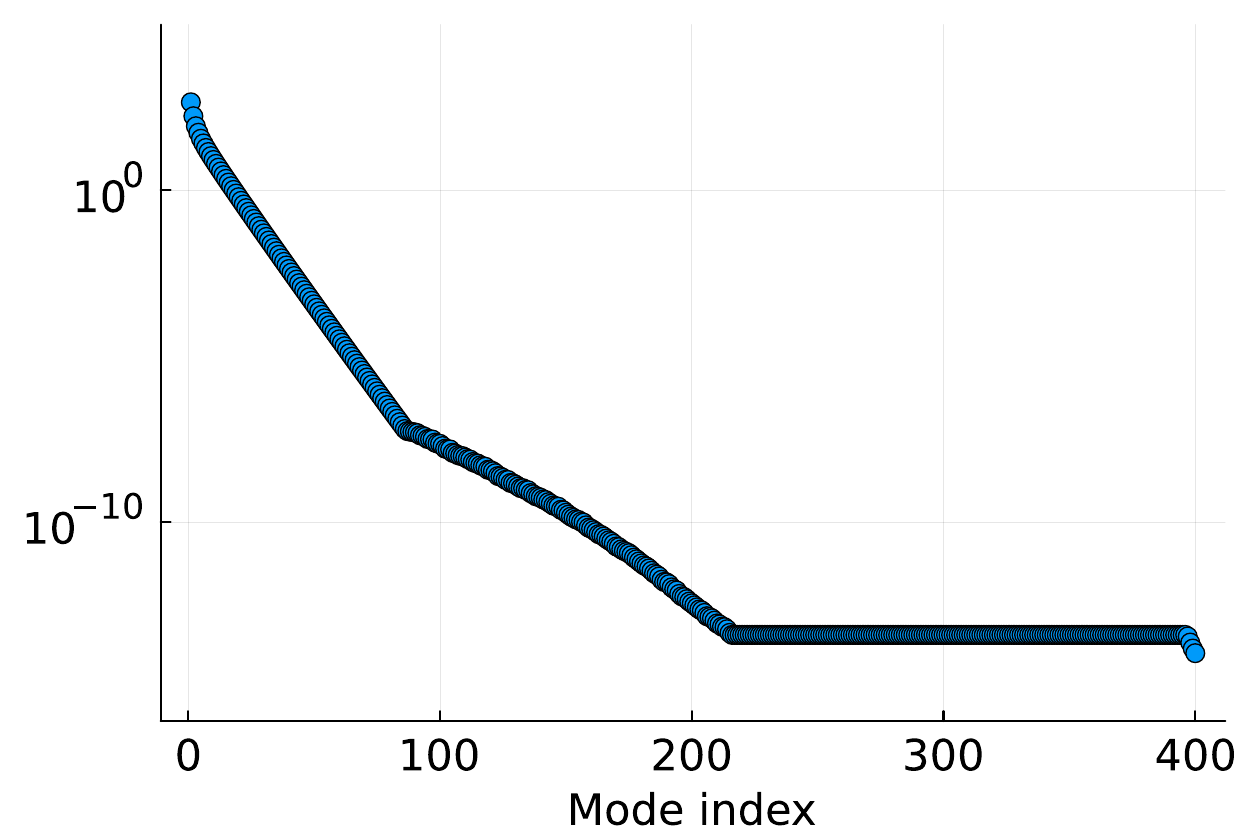}}
\subfloat[$p=3$]{\includegraphics[width=.33\textwidth]{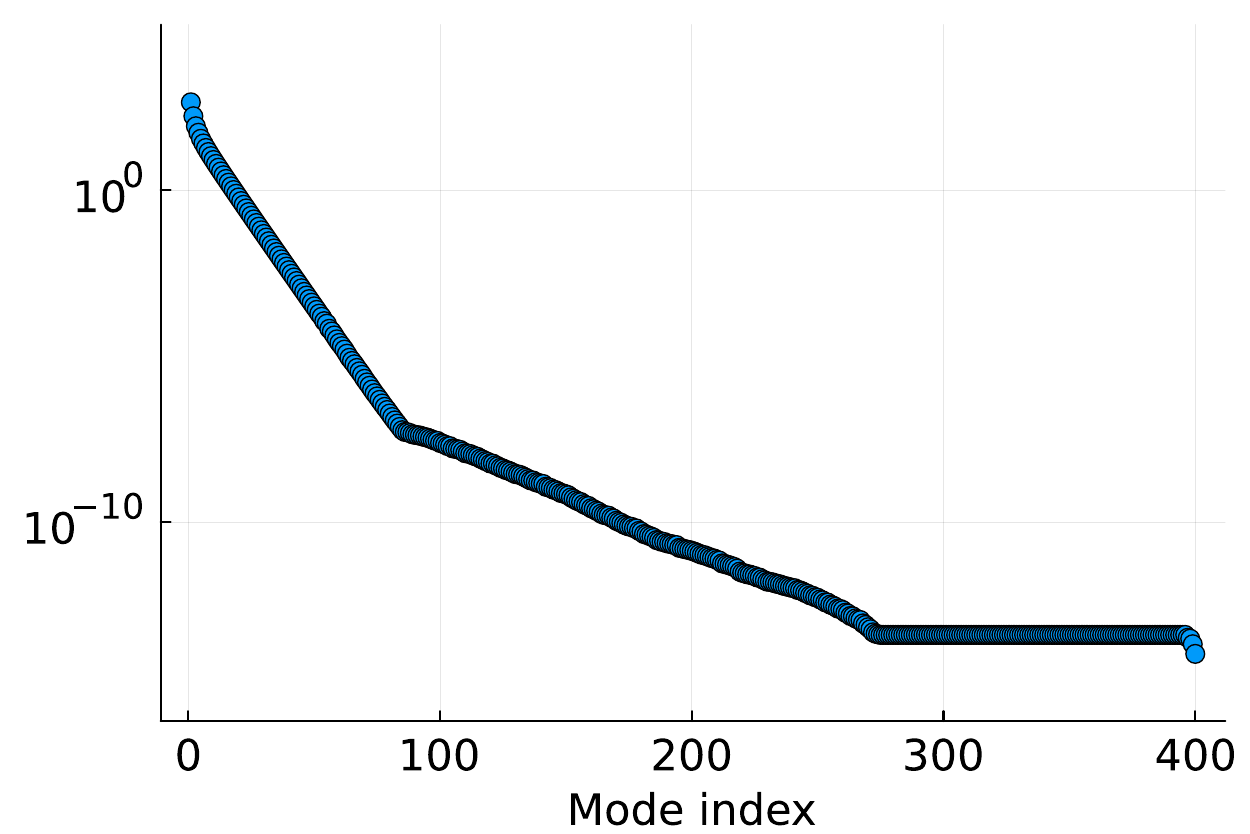}}
\subfloat[$p=7$]{\includegraphics[width=.33\textwidth]{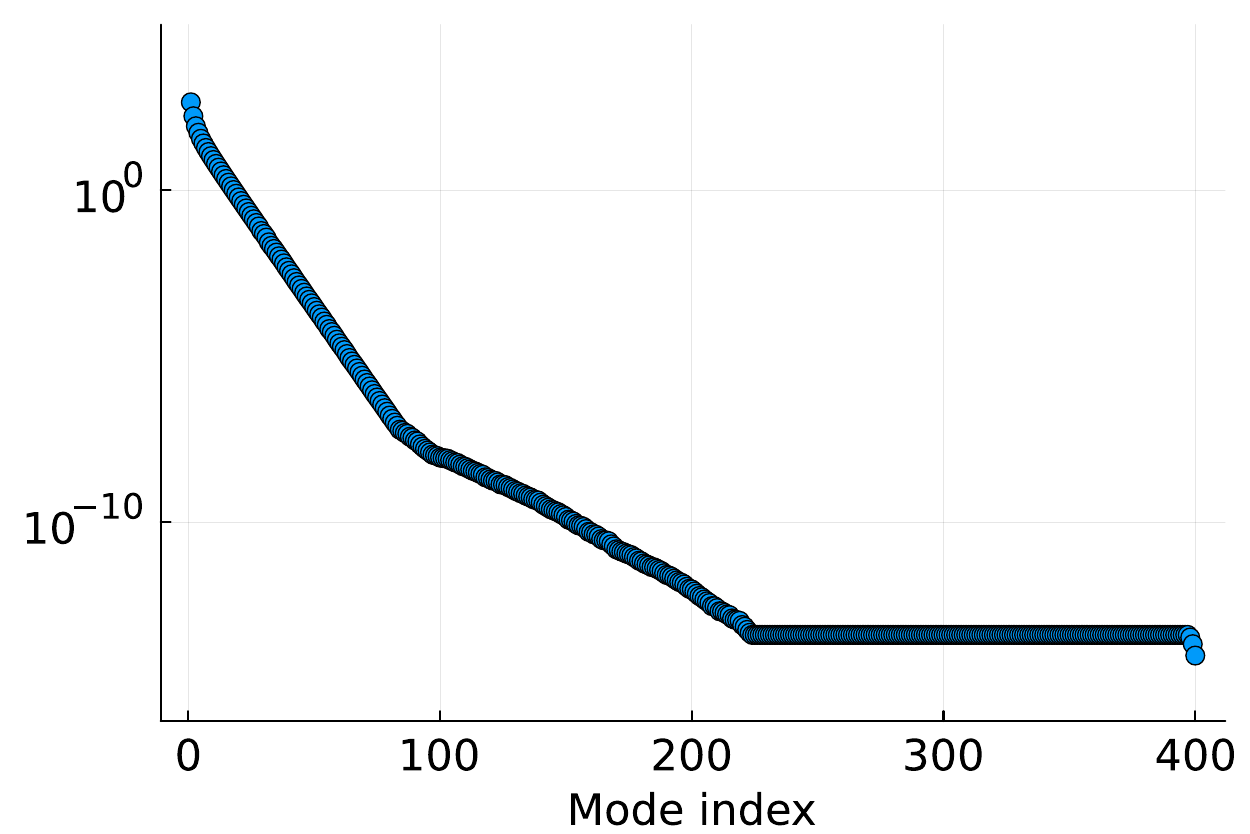}}
\caption{Singular values of $\bm{V}_\text{snap}$ in Burgers' equation}
\label{fig_sval3}
\end{figure}

\begin{table}
\begin{center}
\begin{tabular}{ | l | c | c | c | c | c | c |}
\hline
   Error/nodes& $p=0$ & $p=3$ & $p=7$\\ \hline
   $N=30$ & 1.38e-3/ 68 & 4.35e-4/ 125 & 6.18e-4/ 69  \\   \hline
  $N=40$ & 3.35e-5/ 94  & 6.93e-5/ 214  & 5.16e-5/ 145 \\   \hline
  $N=50$ & 2.43e-5/ 128 &  8.64e-6/ 283&  1.81e-5/ 271\\ \hline
\end{tabular}
\caption{Error and number of hyper-reduced nodes in Burgers' equation}
\label{tb:Burgers1}
\end{center}
\end{table}
\par We observe that the ROM errors are of similar magnitude for all values of $p$ tested. However, as with the advection equation, DG methods yield more hyper-reduced nodes in general.

\subsubsection{Compressible Euler equations}\label{sssec:compressible-Euler}
The compressible Euler equations for $d$ dimensions are given by
\begin{equation}
    \begin{split}
        \f{\p\rho}{\p\rmt} + &\sumjd \f{\p u_j}{\p \bm{x}_j} = \bm{0} \\
       \f{\p\rho u_i}{\p\rmt} + &\sumjd \f{\p (\rho u_i u_j + p\delta_{ij})}{\p \bm{x}_j} = \bm{0}\csp i=1,...,d \\
       \f{\p E}{\p \rmt} + &\sumjd\f{\p (u_j(E+p))}{\p \bm{x}_j} = \bm{0},
    \end{split}
    \label{eq:num_CEuler}
\end{equation}
where $\rho$ is density, $u_i$ is the $i$-th component of velocity, and $E$ is the total energy. The pressure $p$ and internal energy $\rho e$ are given by
$$p=(\gamma-1)\LRp{E-\f{1}{2}\rho|\bm{u}|^2}\csp \rho e = E-\f{1}{2}\rho|\bm{u}|^2\csp |\bm{u}|^2=\sumjd u_j^2.$$
The compressible Euler equations correspond to the compressible Navier-Stokes equations with zero thermal conductivity. There is a unique entropy $S(\bm{u})$ symmetrizing the viscous heat conduction term in Navier-Stokes equation \cite{Hughes86}
$$S(\bm{u}) = -\rho s,$$
where $s=\log(p/\rho^\gamma)$ is the physical specific entropy, and we set $\gamma=1.4$ for all numerical experiments. The entropy variables in $d$ dimensions are given by 
$$v_1=\f{\rho e(\gamma+1-s)-E}{\rho e}\csp v_{d+2}=-\f{\rho}{\rho e}\csp v_{1+i}=\f{\rho u_i}{\rho e}\csp i=1,...,d,$$
and the corresponding conservative variables in terms of the entropy variables are 
$$\rho = -(\rho e)v_{d+2}\csp E=(\rho e)\LRp{1-\f{\sumjd v_{1+j}^2}{2v_{d+2}}}\csp \rho u_i=(\rho e)v_{1+i}\csp i=1,...,d,$$
where
$$\rho e= \LRp{\f{\gamma-1}{(-v_{d+2})^\gamma}}^{1/(\gamma-1)} e^{-s/(\gamma-1)} \csp s = \gamma-v_1+\f{\sumjd v_{1+j}^2}{2v_{d+2}}.$$
Explicit entropy conservative flux expressions for the compressible Euler equation are given in \cite{Ranocha18, Ranocha20, Ranocha22}.



We test both DG and FVM ROMs for an isentropic Gaussian wave initial condition such that 
$$\rho = 1+0.1e^{-25x^2}\csp u=0.1\sin(\pi x)\csp p=\rho^\gamma.$$ The artificial viscosity coefficient is set to $\epsilon = 5\times10^{-4}$, and we run until final time $T=1.0$. \autoref{fig_sval5} shows the singular values of the snapshot matrix. In \autoref{tb:Euler4}, we observe the same trend that DG methods at $p=3$ require more hyper-reduced nodes for similar accuracy.

\begin{figure}
\centering
\noindent
\subfloat[$p=0$ (FVM)]{\includegraphics[width=.33\textwidth]{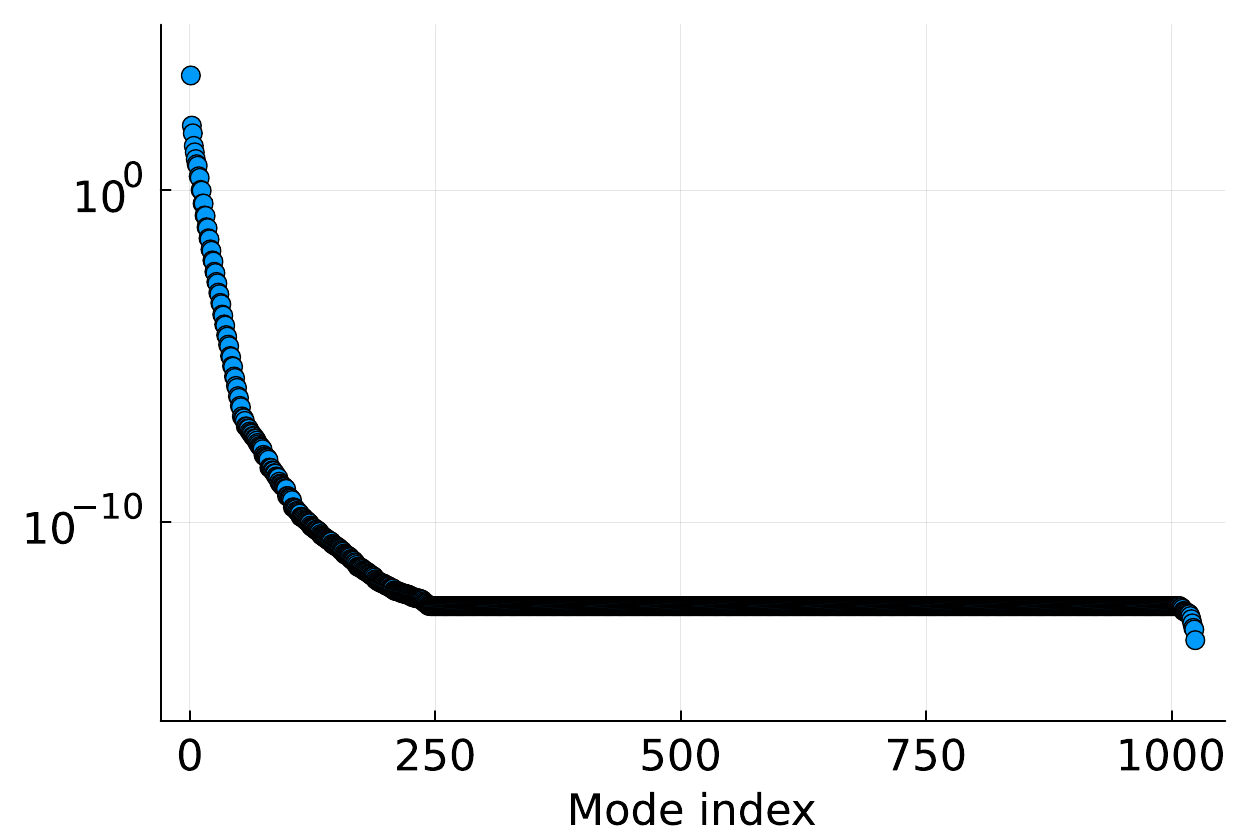}}
\subfloat[$p=3$]{\includegraphics[width=.33\textwidth]{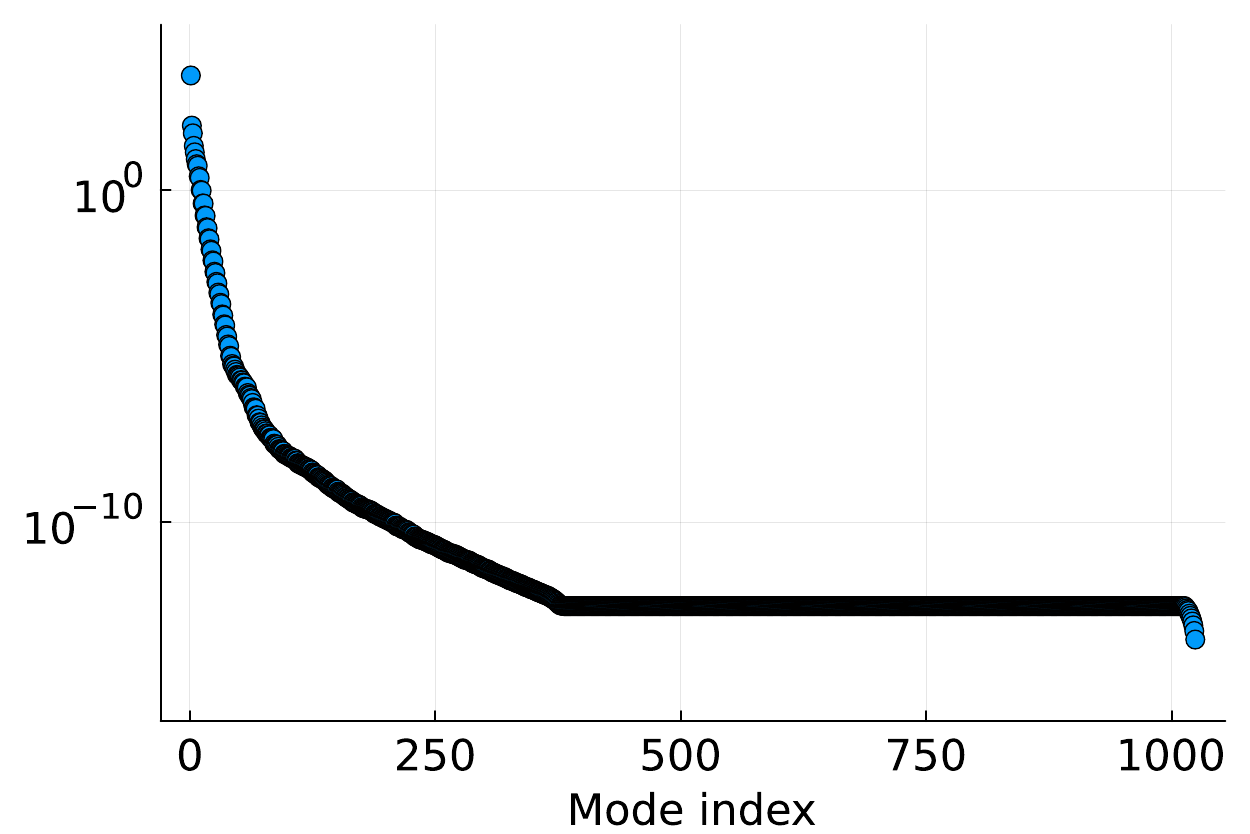}}
\subfloat[$p=7$]{\includegraphics[width=.33\textwidth]{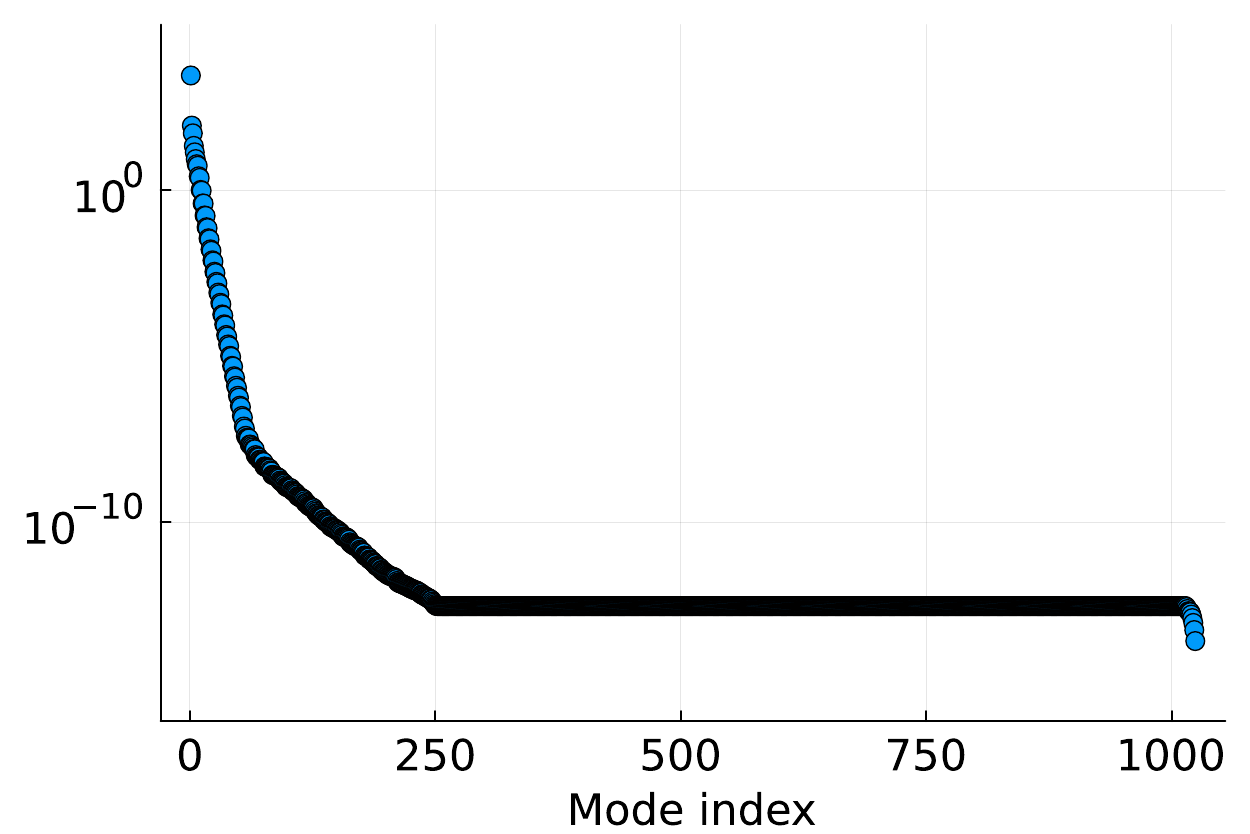}}
\caption{Singular values of $\bm{V}_\text{snap}$ in 1D compressible Euler equations}
\label{fig_sval5}
\end{figure}

\begin{table}
\begin{center}
\begin{tabular}{ | l | c | c | c | c | c | c |}
\hline
   Error/nodes& $p=0$ & $p=3$ & $p=7$\\ \hline
   $N=20$ & 1.00e-4/ 53  & 1.00e-4/ 77  & 1.01e-4/ 53  \\  \hline
  $N=30$ &  4.98-6/ 93 &  5.77e-6/ 174& 5.10e-6/ 80 \\   \hline
  $N=40$ &  1.09e-7/ 285 & 1.14e-7/ 415 & 1.10-7/ 281 \\   \hline
\end{tabular}
\caption{ Error and number of hyper-reduced nodes in 1D compressible Euler equations}
\label{tb:Euler4}
\end{center}
\end{table}

In conclusion, for all equations tested, ROMs based on DG or FVM FOMs yield similar errors, though DG ROMs appear to yield a larger number of hyper-reduced nodes compared to FVM ROMs.

\subsection{1D experiments for DG ROMs}
\subsubsection{Reflective wall boundary condition}
\label{sec:1DEuler_weak}
We continue with the 1D Euler equations on a non-periodic domain $[0,1]$. We implement reflective wall boundary conditions using a ``mirror state" \cite{Chen17, Svard14} at domain boundaries. This results in an evaluation of $\bm{f}^* = \bm{f}_{EC}(\bm{u}^+,\bm{u})$ at boundary nodes with exterior state $\bm{u}^+$ defined as
$$\rho^+=\rho\csp u^+=-u\csp p^+ = p.$$
The initial condition is set as 
$$\rho = 2+0.5e^{-100(x-0.5)^2}\csp u=0.1e^{-100(x-0.5)^2}\csp p=\rho^\gamma,$$
and the system exhibits a shock some time after $T=0.25$ \cite{Chan20ROM}.

 For the FOM, we employ 512 elements, each with an interpolation degree of 3. The total dimension of the FOM is thus 2048. The artificial viscosity coefficient is set to $\epsilon = 2 \times 10^{-4}$. We run the FOM until $T=0.75$.

\begin{figure}
\centering
\noindent
\subfloat[(a) Singular values of snapshots]{\includegraphics[width=.49\textwidth]{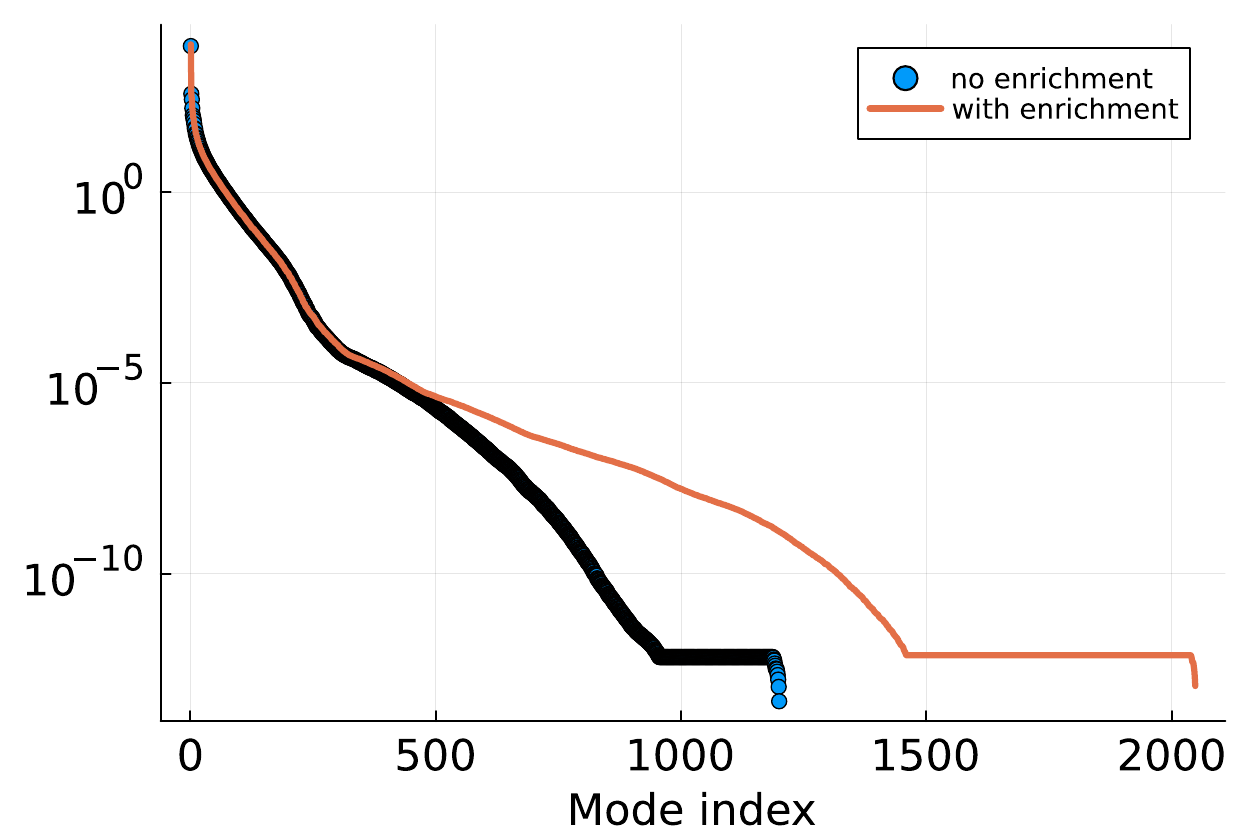}
\label{fig:DG1_sval}}
\subfloat[(b) Entropy projection error ($N=20$)]{\includegraphics[width=.49\textwidth]{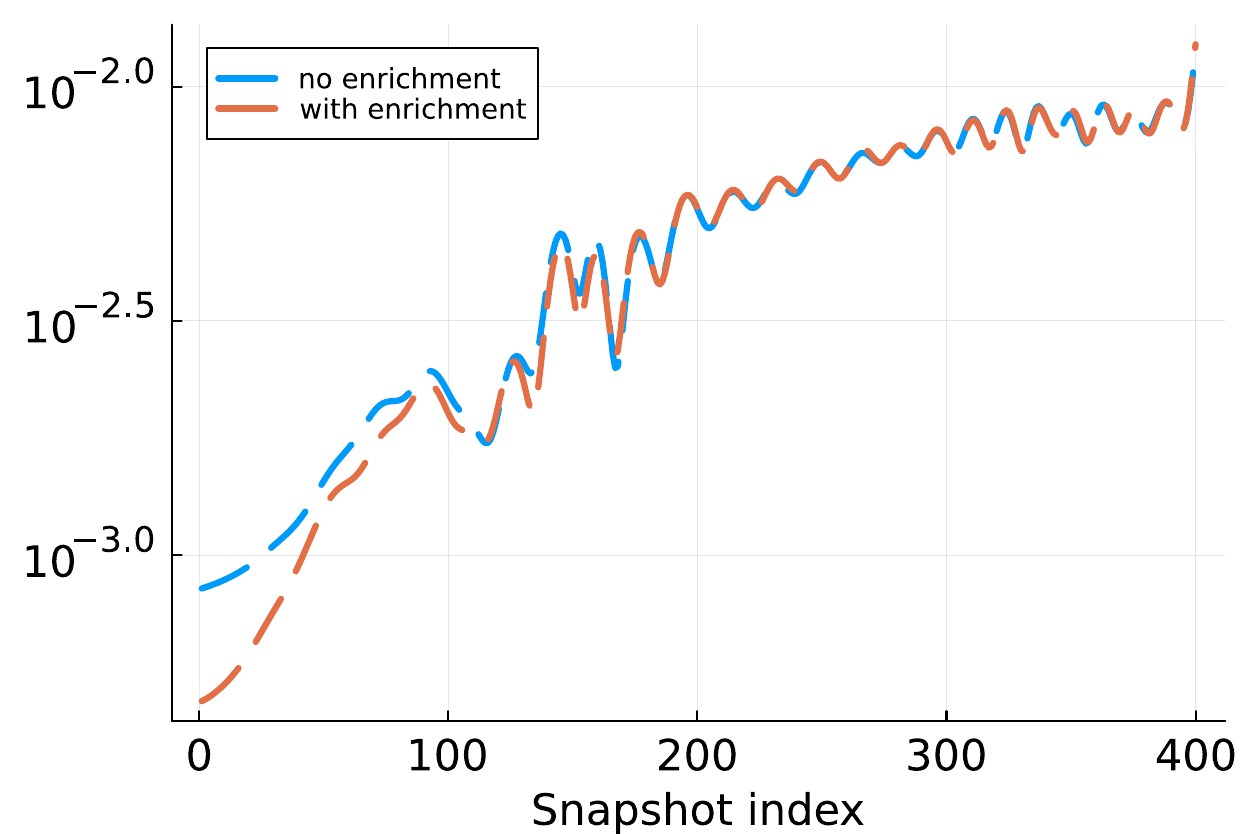}
\label{fig:DG1_proj_er}}

\caption{Singular values and entropy projection error with and without entropy enrichment
}
\end{figure}

\begin{figure}
\centering
\noindent
\subfloat[(a) ROM error (with and without hyper-reduction) and their difference]{\includegraphics[width=.49\textwidth]{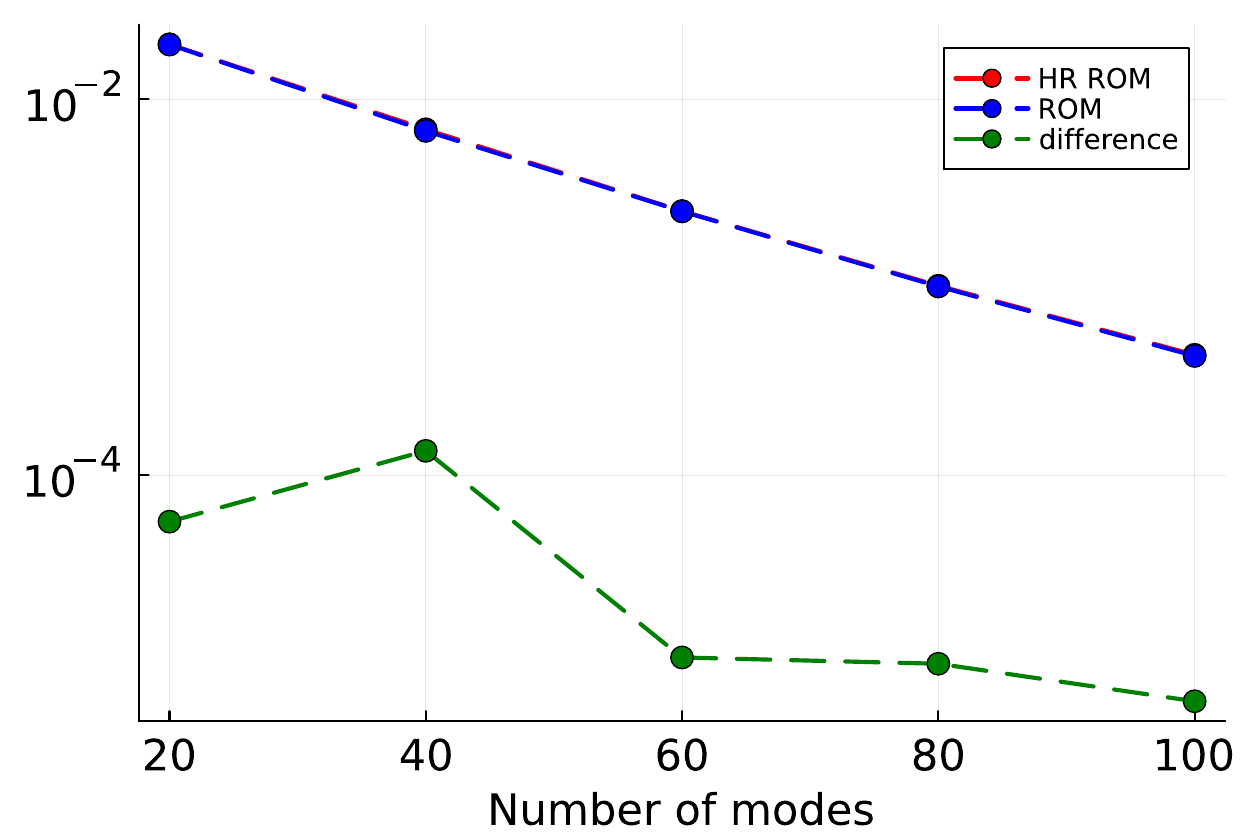}}
\subfloat[(b) Number of hyper-reduced nodes]{\includegraphics[width=.49\textwidth]{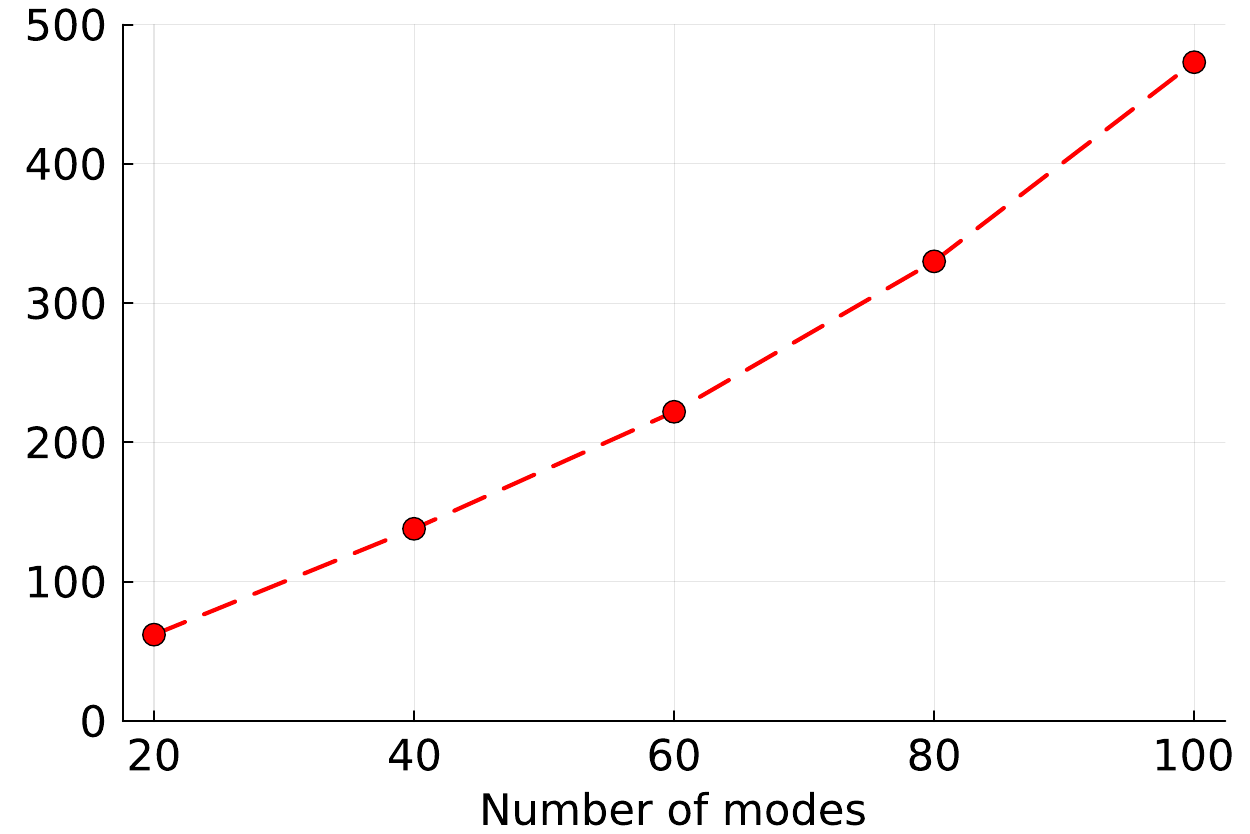}}
\caption{ROM and HR-ROM performance (error and number of HR nodes).}
\label{fig1.2}
\end{figure}

\begin{figure}
\centering
\noindent
\subfloat[(a) $N=20$, $T=.25$]{\includegraphics[width=.49\textwidth]{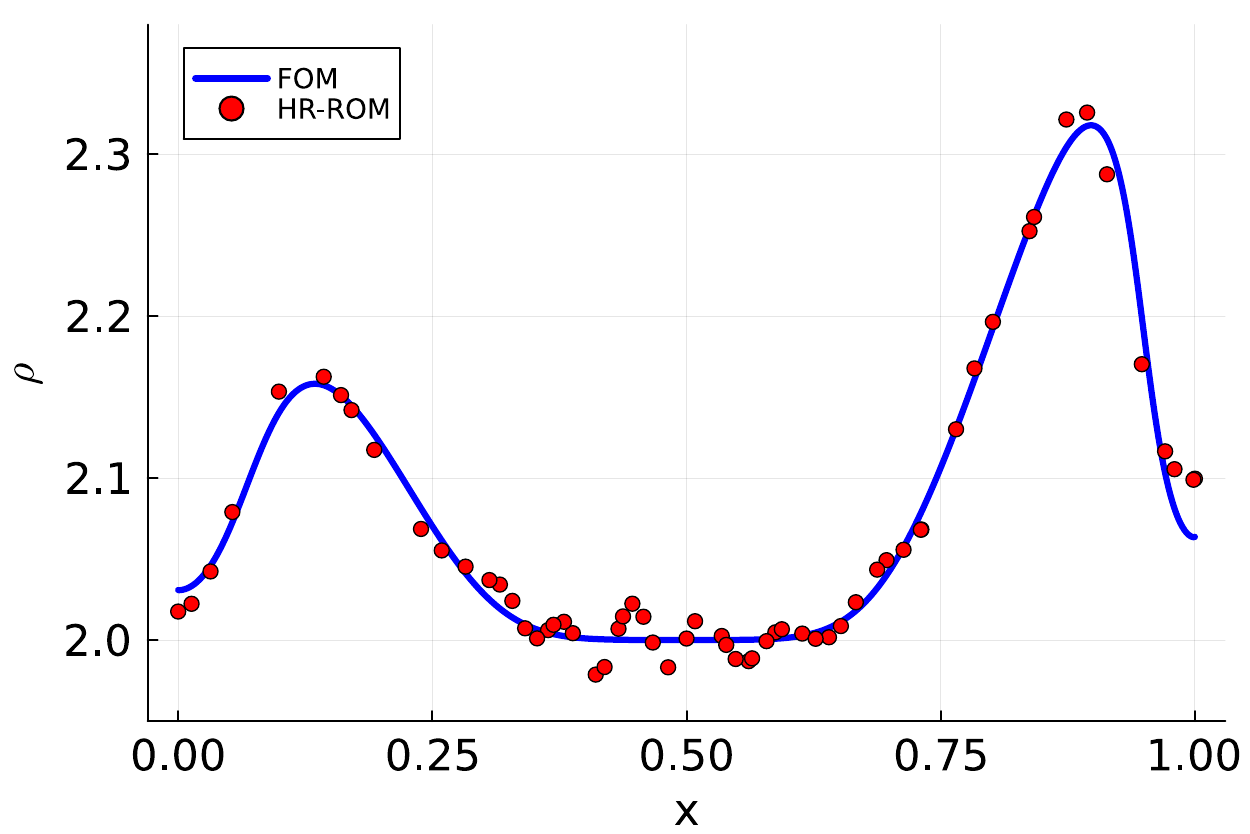}}
\subfloat[(b) $N=60$, $T=.25$]{\includegraphics[width=.49\textwidth]{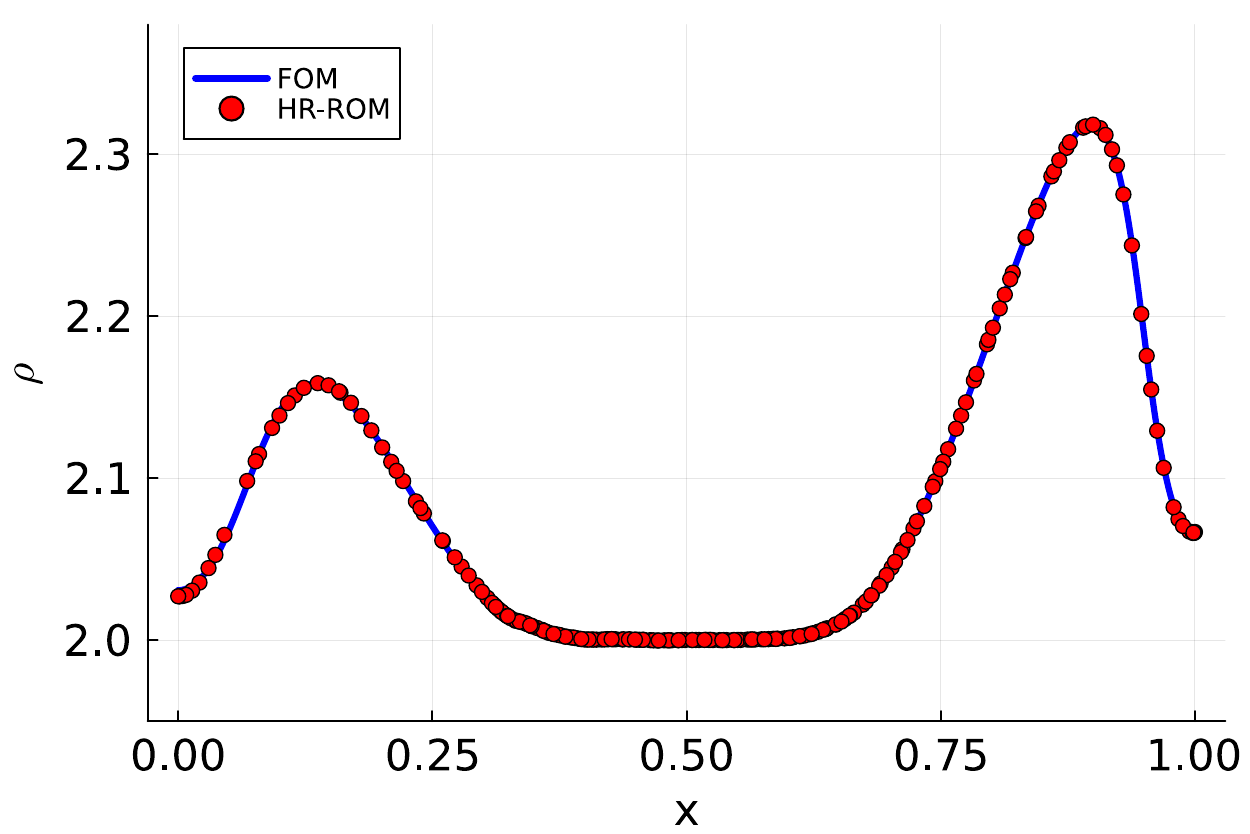}}\\
\subfloat[(c) $N=40$, $T=.75$]{\includegraphics[width=.49\textwidth]{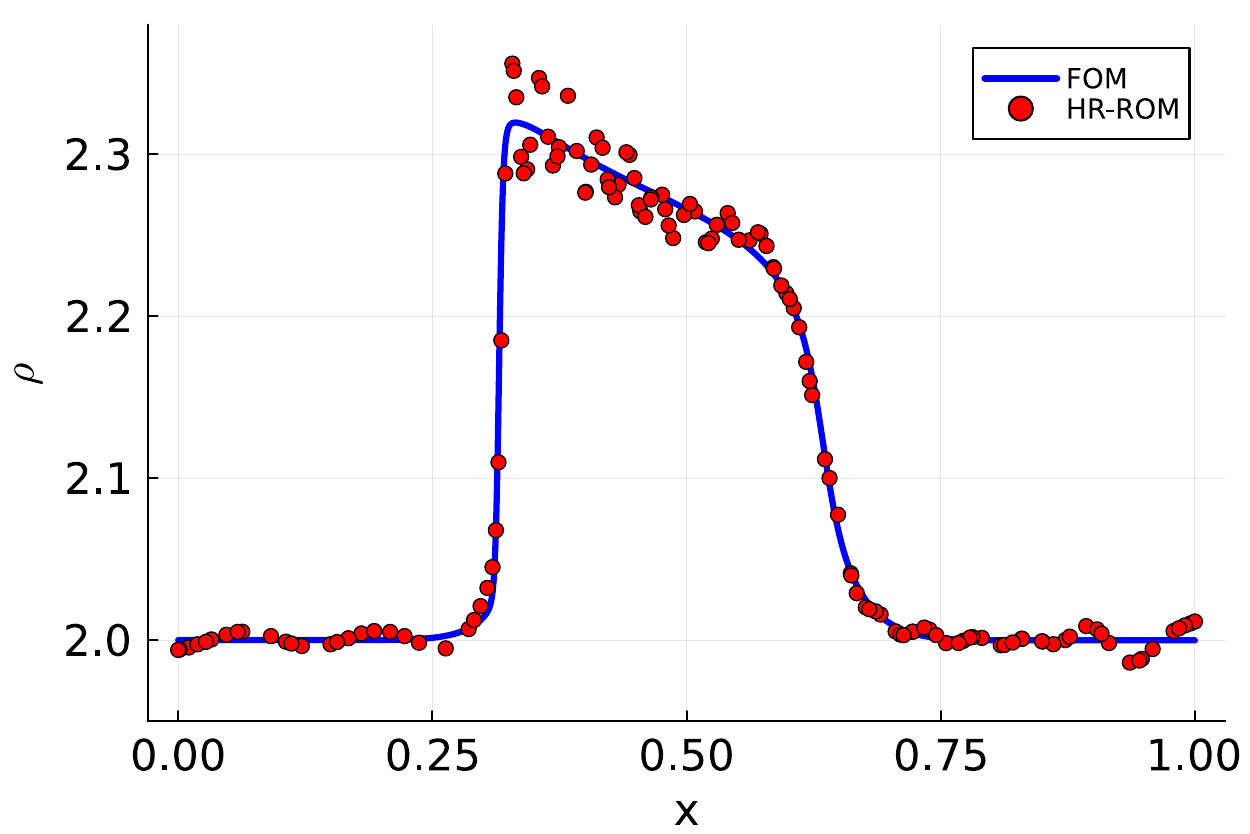}}
\subfloat[(d) $N=100$, $T=.75$]{\includegraphics[width=.49\textwidth]{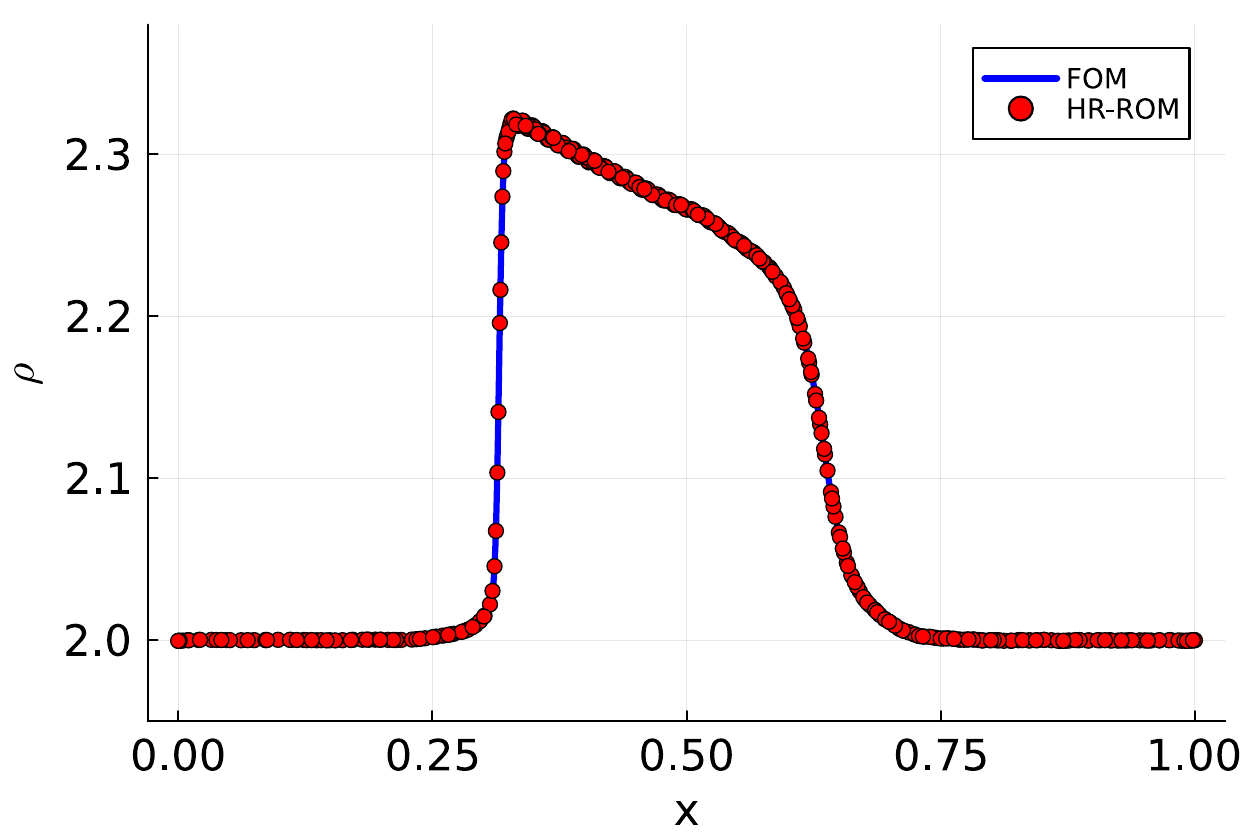}}
\caption{Density $\rho$ plots. Blue line plots display the FOM solutions, while red markers indicate the ROM solutions on hyper-reduced nodes.
}
\label{fig1.3}
\end{figure}

\begin{figure}
\centering
\noindent
\subfloat[(a) Convective entropy for $N=20$ HR-ROM]{\includegraphics[width=.49\textwidth]{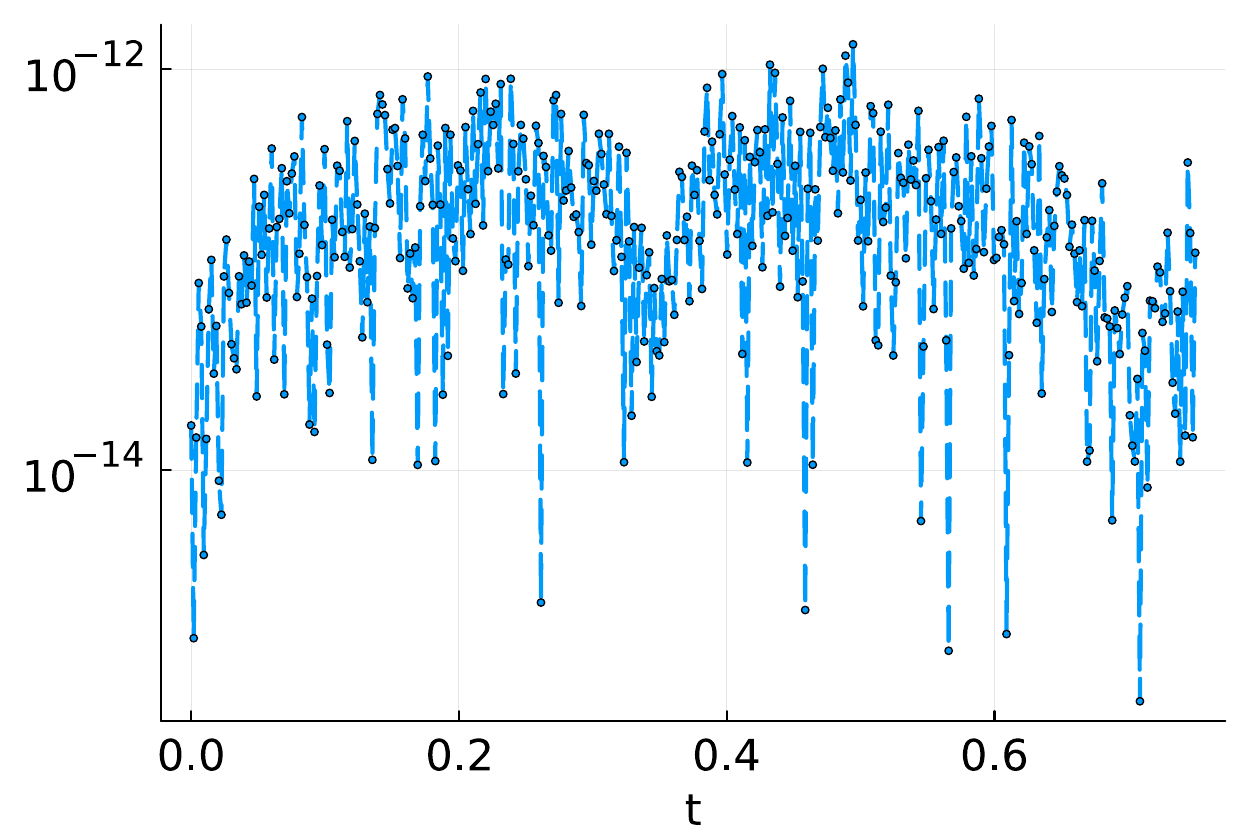}
\label{fig:DG1_conv_entro}}
\subfloat[(b) Entropy dissipation from viscosity for HR-ROMs]{\includegraphics[width=.49\textwidth]{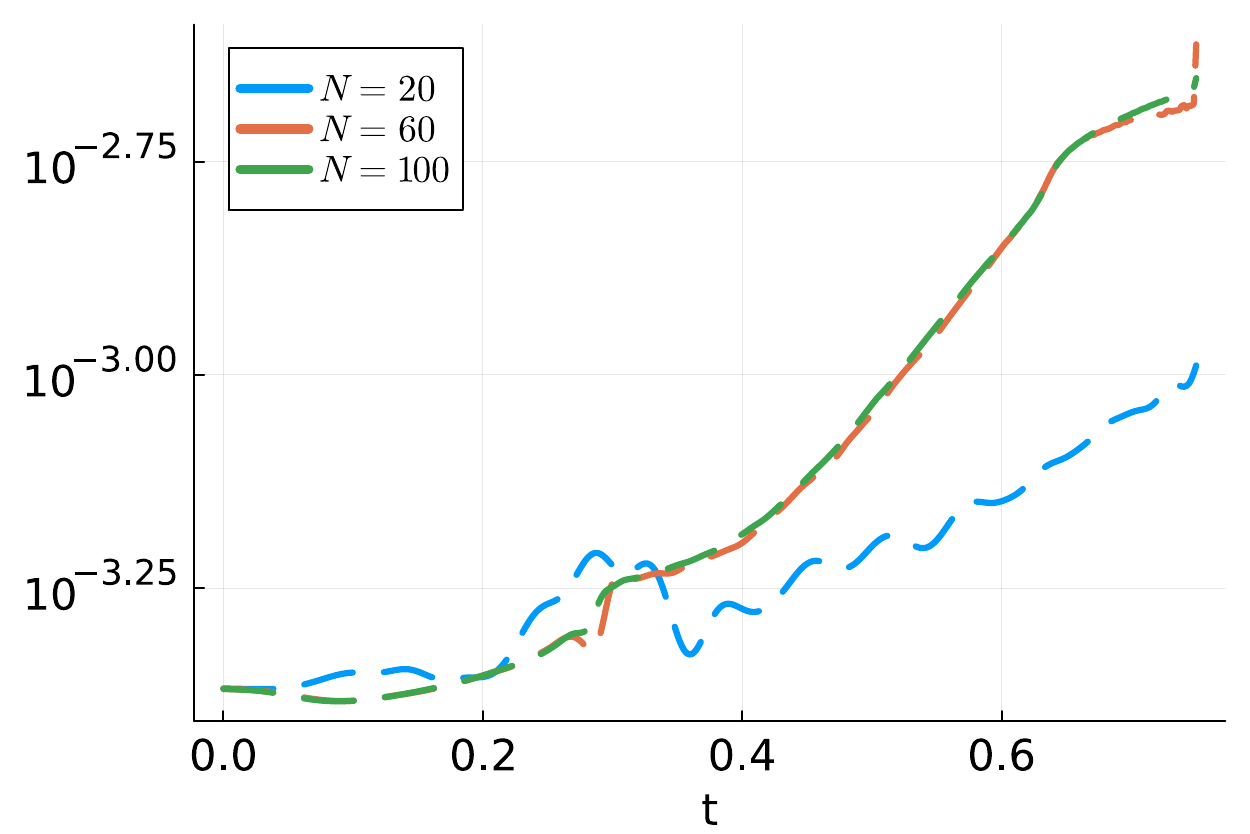}
\label{fig:DG1_entro_diss}}
\caption{Convective entropy and entropy dissipation for HR-ROMs.}
\end{figure}

\begin{figure}
\centering
\noindent
\subfloat[(a) Relative difference of solutions]{\includegraphics[width=.49\textwidth]{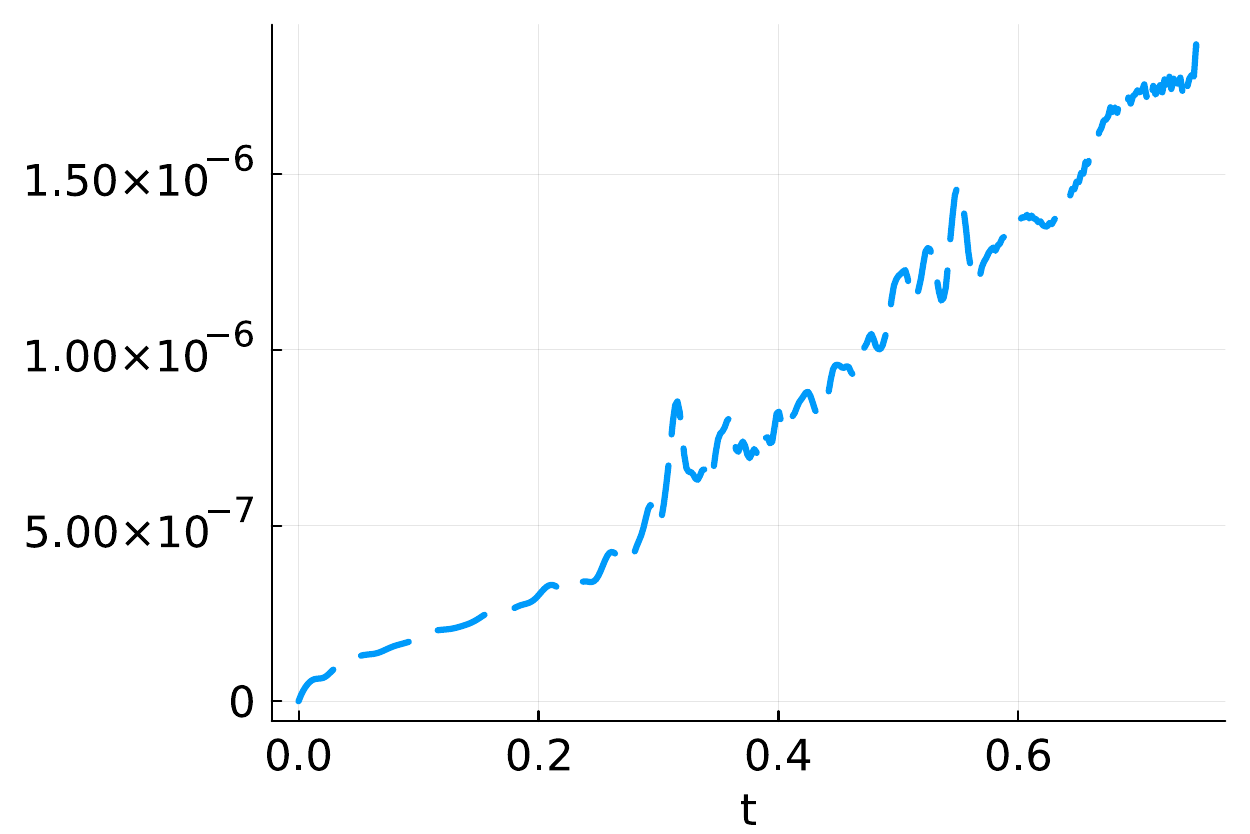}
\label{fig:DG1_sol_diff}}
\subfloat[(b) Absolute difference of entropy dissipation from viscosity]{\includegraphics[width=.49\textwidth]{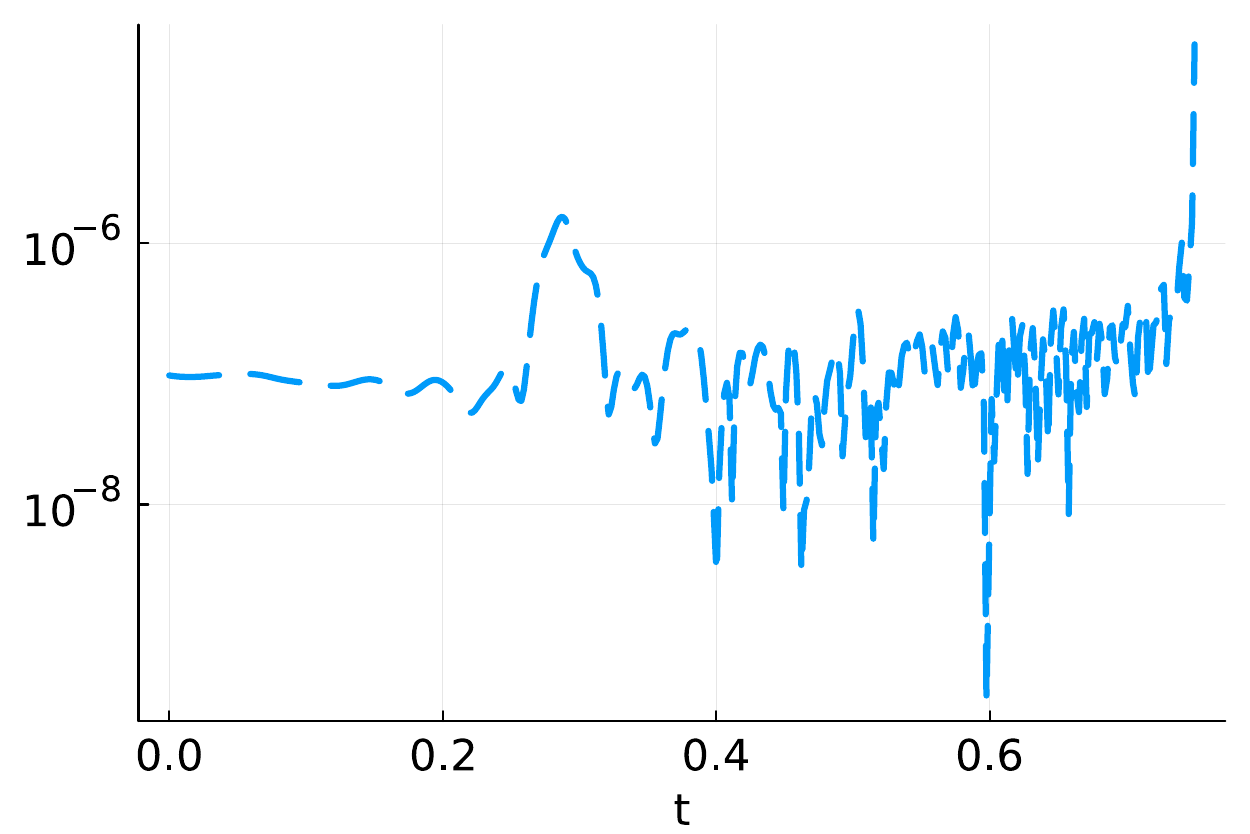}
\label{fig:DG1_dissipation_diff}}
\caption{Relative difference of solution and absolute difference of entropy dissipation from viscosity for $N=100$ HR-ROM using entropy stable and BR-1 viscosity discretization.}
\end{figure}

Initially, we explore the impact of incorporating entropy variables into the snapshots. Fig.~\ref{fig:DG1_sval} displays the singular values computed from the snapshot matrices $\bm{V}_\text{snap}$ with and without entropy variable enrichment. We observe that including snapshots of entropy variables results in a slower decay rate of the singular values, especially at higher modes. In Fig.~\ref{fig:DG1_proj_er}, we assess the relative error between snapshots of the FOM solution $\bm{u}_\text{FOM}$ and the \textit{entropy projected} solution $\bm{u}\LRp{\bm{V}_N\bm{P}_N\bm{v}\LRp{\bm{V}_N\bm{u}_N}}$ using 20 modes. We define the energy residual of the first $i$ modes as $$E_i = \sqrt{\sum_{j=i+1}^{N_t}\mu_j^2 / \sum_{j=1}^{N_t}\mu_j^2}.$$ Although singular values of enriched snapshots decay slower, the energy residuals $E_{20}$ are similar ($5.80\times 10^{-3}$ without enrichment and $5.25\times 10^{-3}$ with enrichment). We observe that incorporating entropy variables leads to slightly lower errors for this 1D example.

\par Fig.~\ref{fig1.2} illustrates the relative $L^2$ error and the difference in error for conservative variables between FOM and ROM solutions, alongside the count of hyper-reduced nodes, for mode counts from 20 to 100 in 20-mode increments. In this example, the ROM solutions with or without hyper-reduction have very similar performance, suggesting that the hyper-reduction step does not introduce significant errors.

\par Next, we take a look at the actual solutions by plotting the conservative variable, density $\rho$, in Fig.~\ref{fig1.3}. At $T=0.25$, the system has not yet developed any shocks. With 20 modes, the ROM exhibits some oscillations around the smooth FOM solution, but these oscillations diminish when we increase the number of modes to 60. At $T=0.75$, a shock develops, causing the solution with 40 modes to oscillate due to the discontinuity. However, these oscillations disappear when we further increase the number of modes to 100. Even with a small number of modes and oscillatory solution approximations, the solution remains stable. 

\par We then numerically assess entropy stability. In Fig.~\ref{fig:DG1_conv_entro}, we show for a ROM with 20 modes the convective entropy contribution 
\begin{equation*}
\LRb{\bm{v}_N^T\LRp{\widebar{\bm{V}}_h^T\LRp{\LRp{\widebar{\bm{Q}}_h-\widebar{\bm{Q}}_h^T}\circ\bm{F}}\bm{1} + \bm{V}_b^T\bm{B}_b\bm{f}_b^*}},
\end{equation*}
where the variables are defined in \eqref{eq:DG_ROM_HR_weakBC1}. \autoref{thm:weakBC_stability} implies this quantity should be exactly zero to ensure entropy conservation. In practice, its magnitude is close to machine precision. In Fig.~\ref{fig:DG1_entro_diss}, we illustrate the entropy dissipation from the viscous term \begin{equation*}
\epsilon\bm{v}_N^T\bm{K}_N\bm{u}_N,
\end{equation*}
with different numbers of modes. The entropy dissipation resulting from the artificial viscosity consistently remains positive over simulation time, confirming that entropy dissipation is observed.

Finally, we implement the provably entropy stable viscosity discretization \eqref{eq:ES_visc} for the same example. We fix $N=100$, run to final time $T=.75,$ and plot the relative difference of solutions (using previous BR-1 solution as a reference) in \autoref{fig:DG1_sol_diff} together with the absolute difference of entropy dissipation induced by the viscosity in \autoref{fig:DG1_dissipation_diff}. We observe that the entropy stable discretization exhibits no significant differences in terms of HR-ROM accuracy and entropy dissipation from the viscosity.

\subsubsection{Sod shock tube}
\label{sec:sod}
We now consider another example: the Sod shock tube \cite{Sod78}. The domain is now $[-\f{1}{2},\f{1}{2}]$ with the exact solutions imposed at boundaries, and the initial condition is set as
\begin{equation*}
    \rho = 0.125 + \f{0.875}{1+e^{100x}}\csp u = 0\csp p = 0.1 + \f{0.9}{1+e^{100x}},
\end{equation*}
which corresponds to a smoothing of the standard discontinuous initial condition. For the FOM, we still employ 512 elements with an interpolation degree of 3. The artificial viscosity coefficient is set to $5 \times 10^{-4}$. We execute the model until $T=0.25$.

\begin{remark}
The solution still remains stable if we do not smooth the initial condition. However, we observe that when using a strictly discontinuous initial condition, the largest eigenvalue of the reduced diffusion matrix \eqref{eq:DG_ROM_ES_visc} is significantly larger than for the smoothed initial condition case. As a result, not smoothing the initial condition results requires a much smaller time-step for stability under explicit time-stepping. 
\end{remark}

\begin{figure}
\centering
\noindent
\subfloat[(a) Singular values of snapshots]{\includegraphics[width=.49\textwidth]{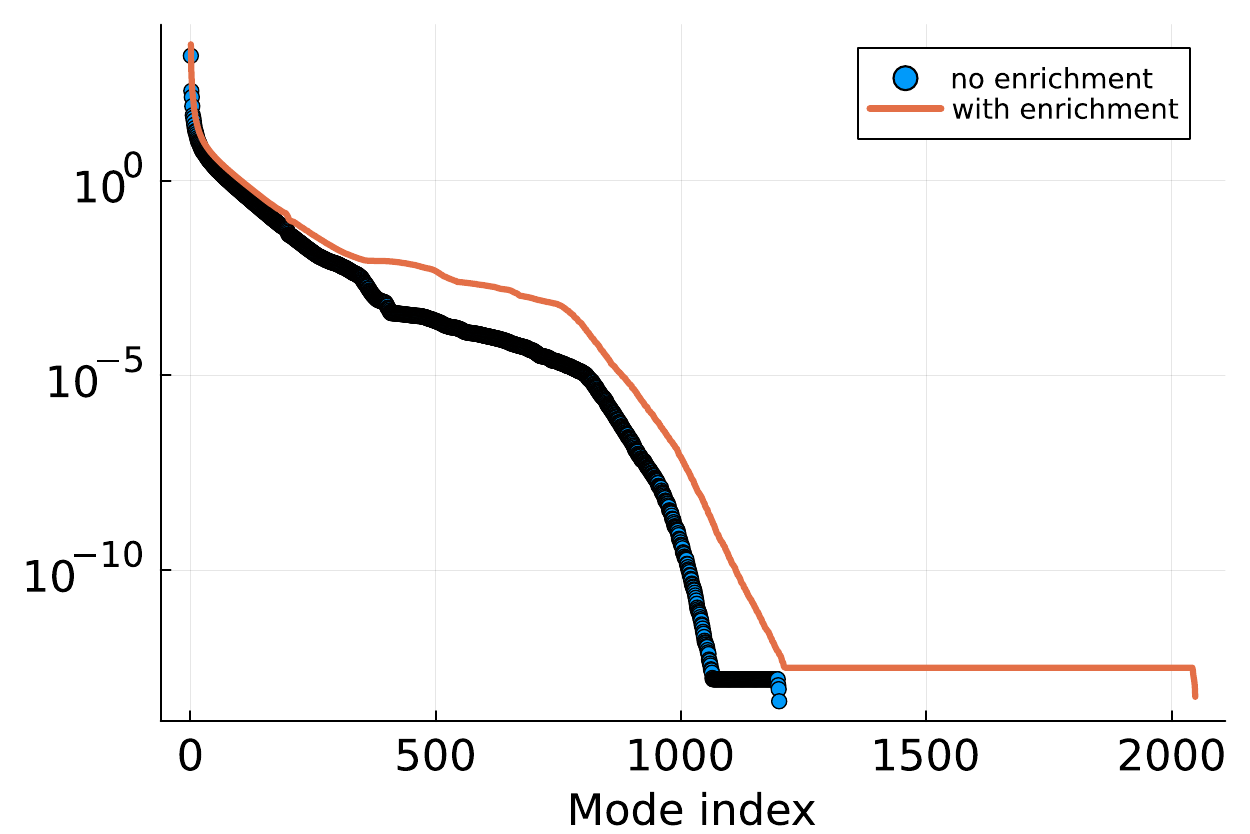}
\label{fig:DG2_sval}}
\subfloat[(b) Entropy projection error ($N=30$)]{\includegraphics[width=.49\textwidth]{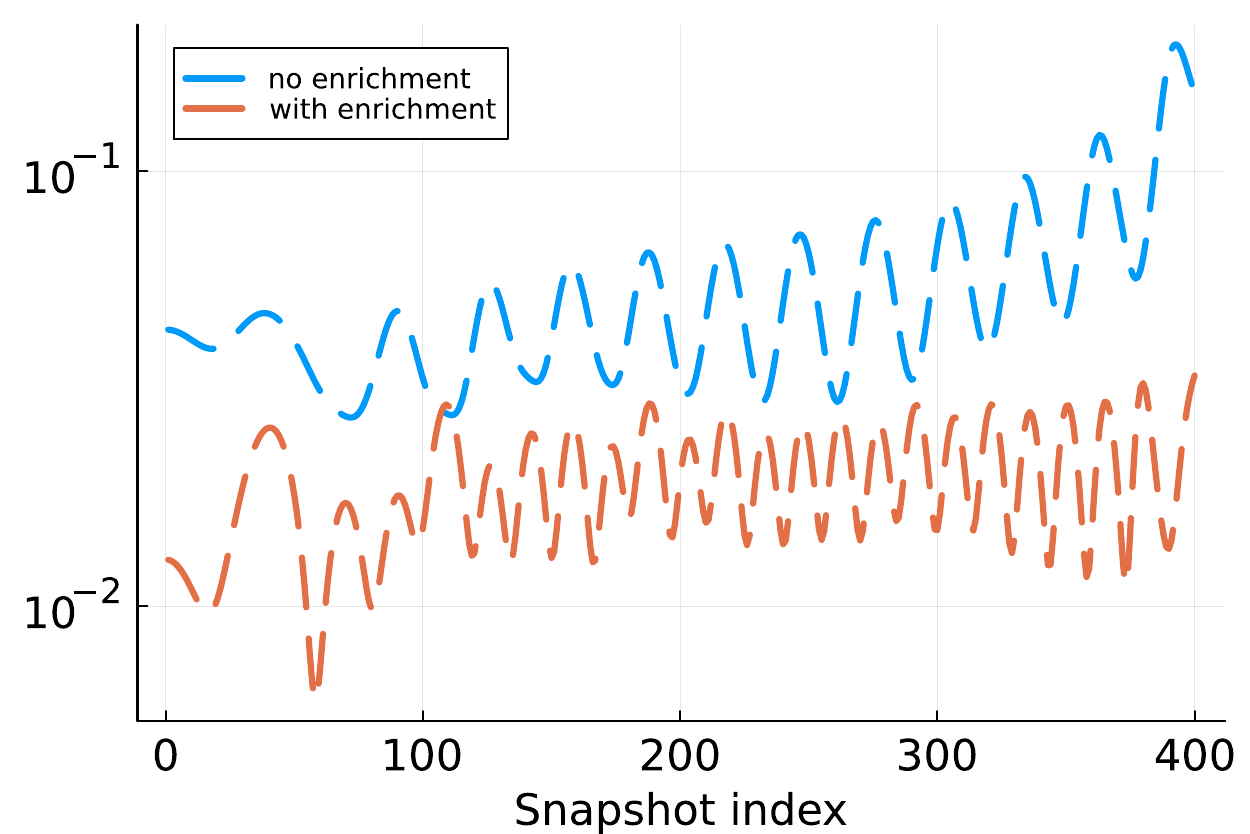}
\label{fig:DG2_proj_er}}
\caption{Singular values and entropy projection error with and without entropy enrichment.}
\end{figure}

\begin{figure}
\centering
\noindent
\subfloat[(a) ROM error (with and without hyper-reduction) and their difference]{\includegraphics[width=.49\textwidth]{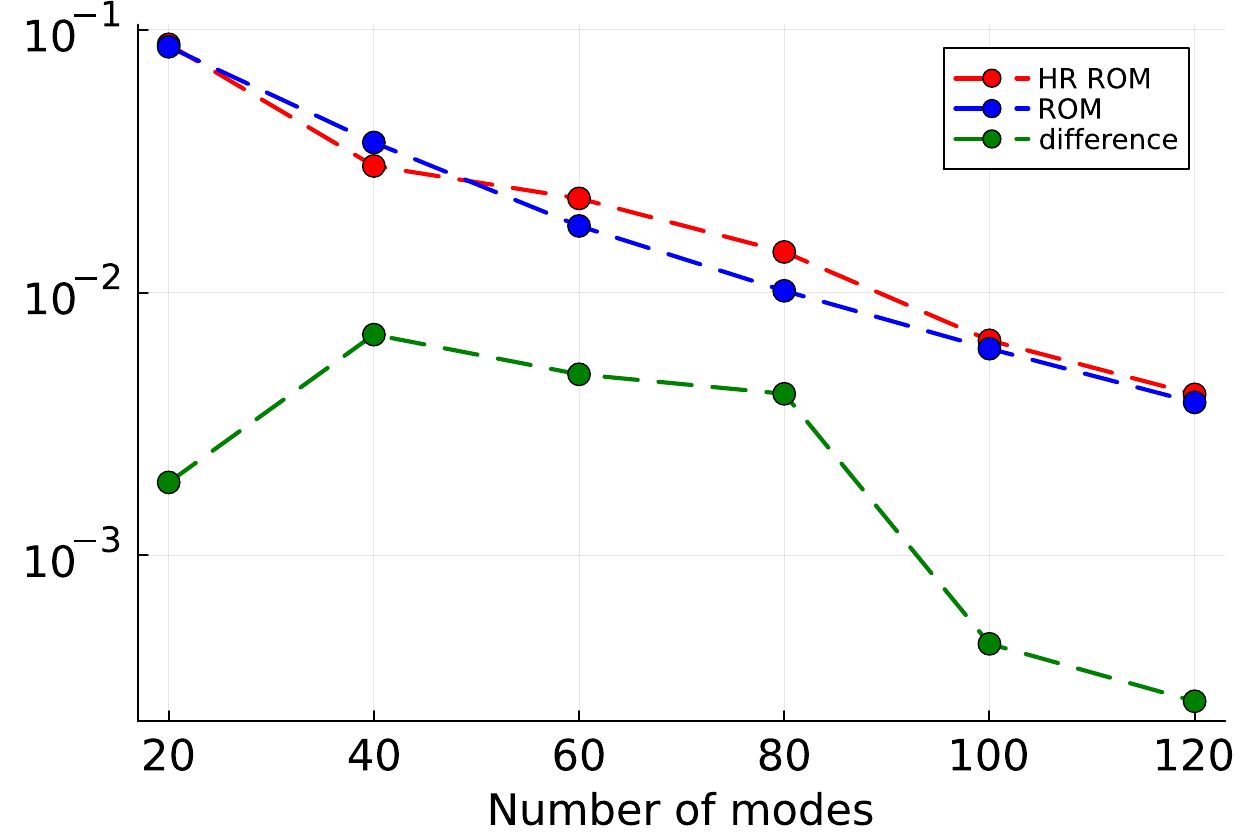}}
\subfloat[(b) Number of hyper-reduced nodes]{\includegraphics[width=.49\textwidth]{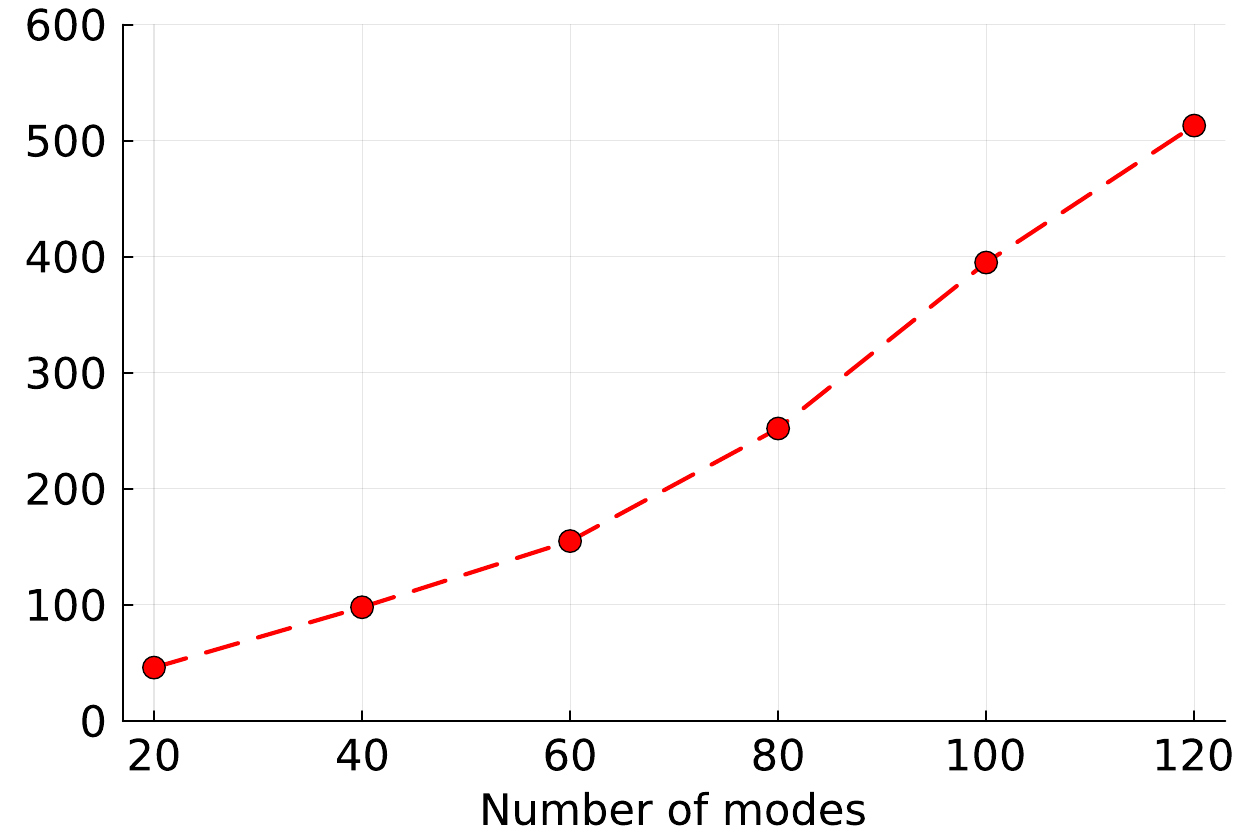}}
  \caption{ROM and HR-ROM performance (error and number of HR nodes).}
\label{fig2.2} 
\end{figure}

\begin{figure}
\centering
\noindent
\subfloat[(a) $N=20$]{\includegraphics[width=.49\textwidth]{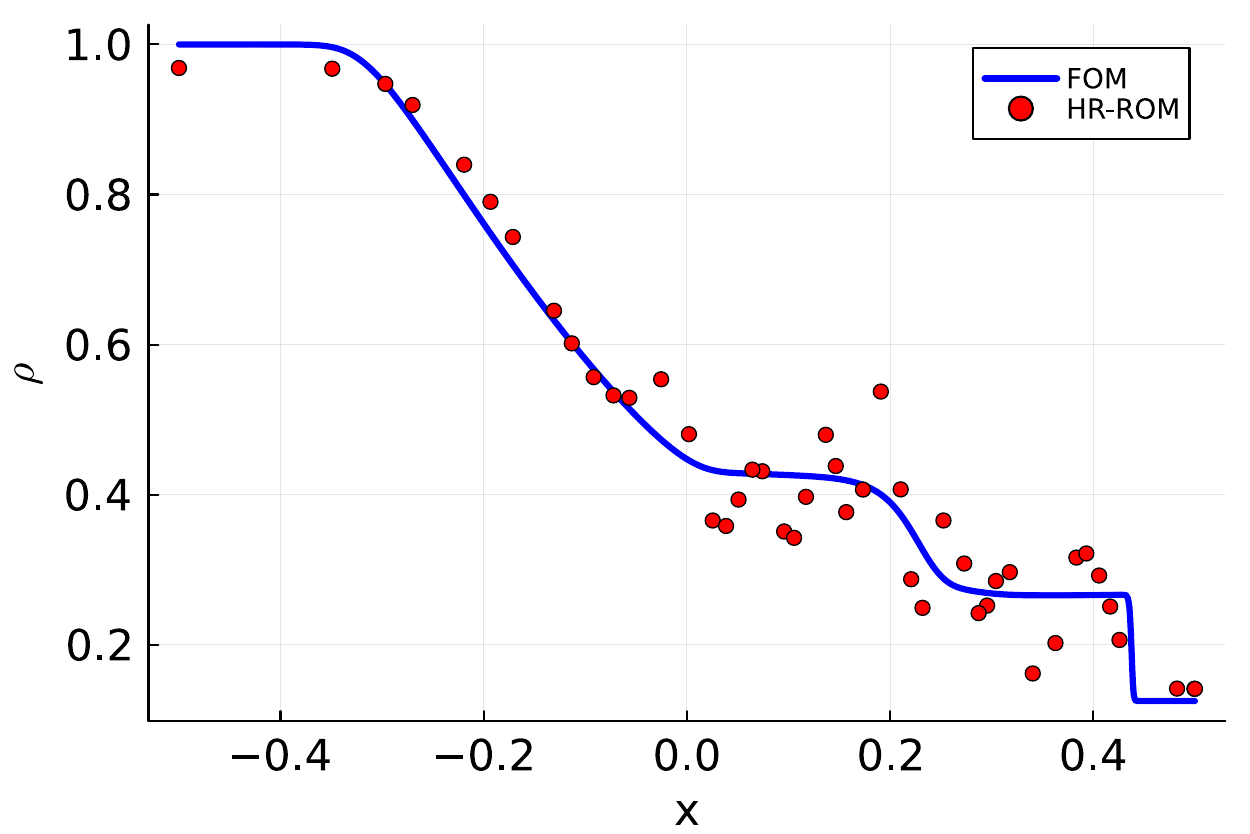}}
\subfloat[(b) $N=100$]{\includegraphics[width=.49\textwidth]{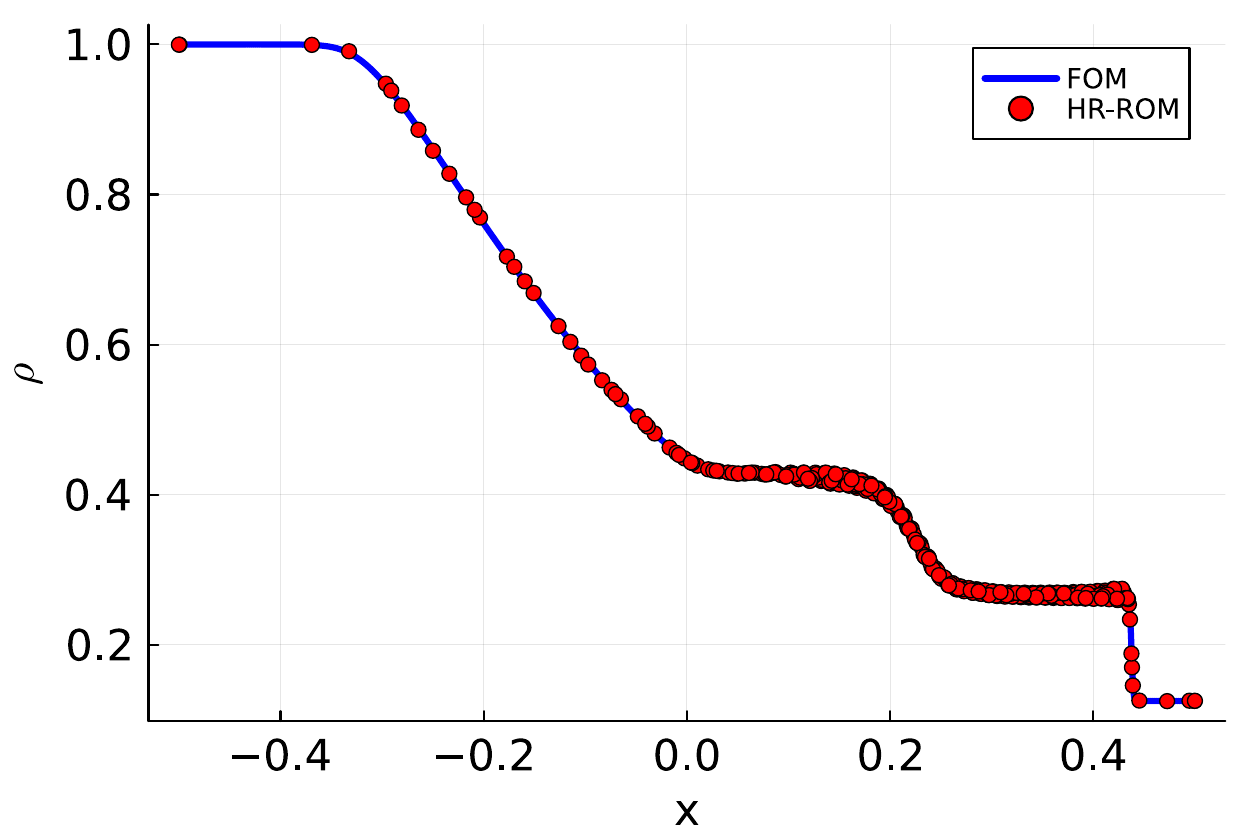}}
\caption{Density $\rho$ at $T=.25$. FOM solutions are in blue lines, and ROM solutions are in red nodes.}
\label{fig2.3}
\end{figure}

\begin{figure}
\centering
\noindent
\subfloat[(a) Convective entropy for $N=20$ HR-ROM]{\includegraphics[width=.49\textwidth]{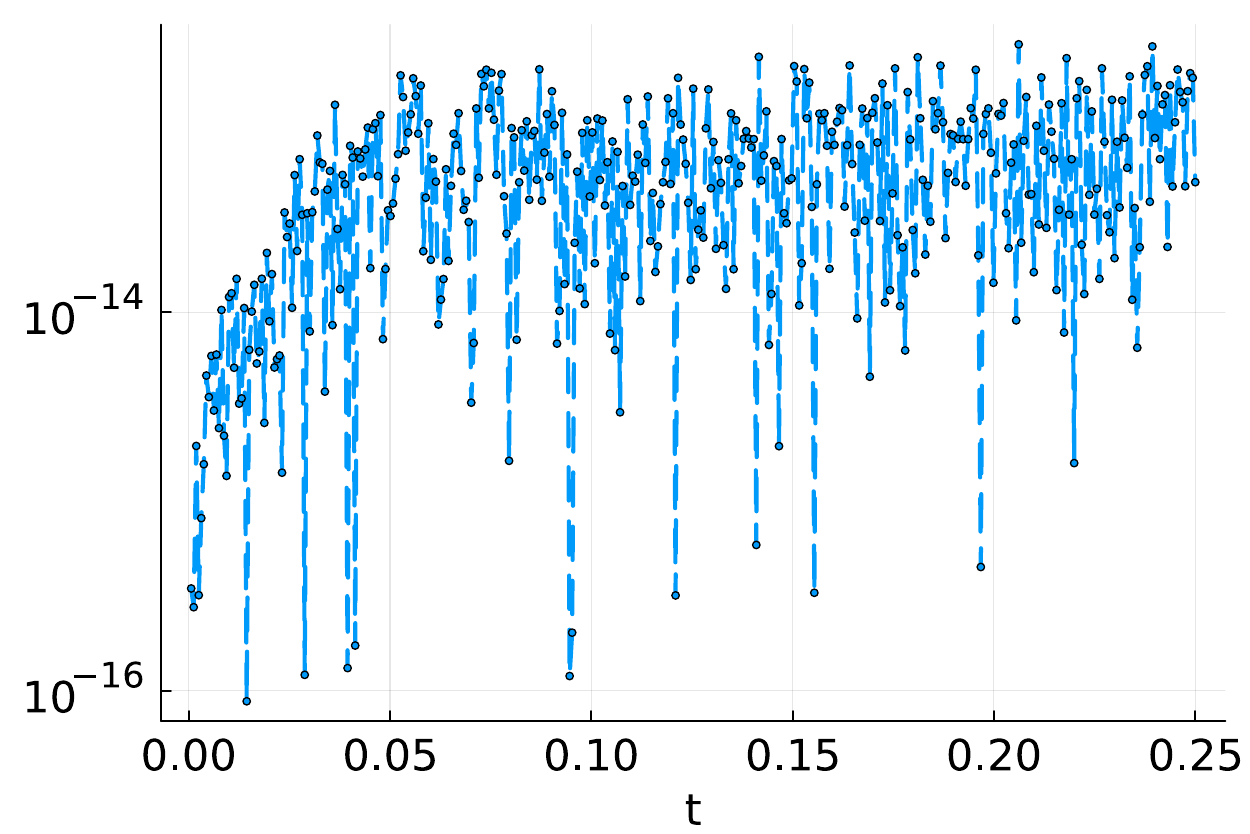}
\label{fig:DG2_conv_entro}}
\subfloat[(b) Entropy dissipation from viscosity for HR-ROMs]{\includegraphics[width=.49\textwidth]{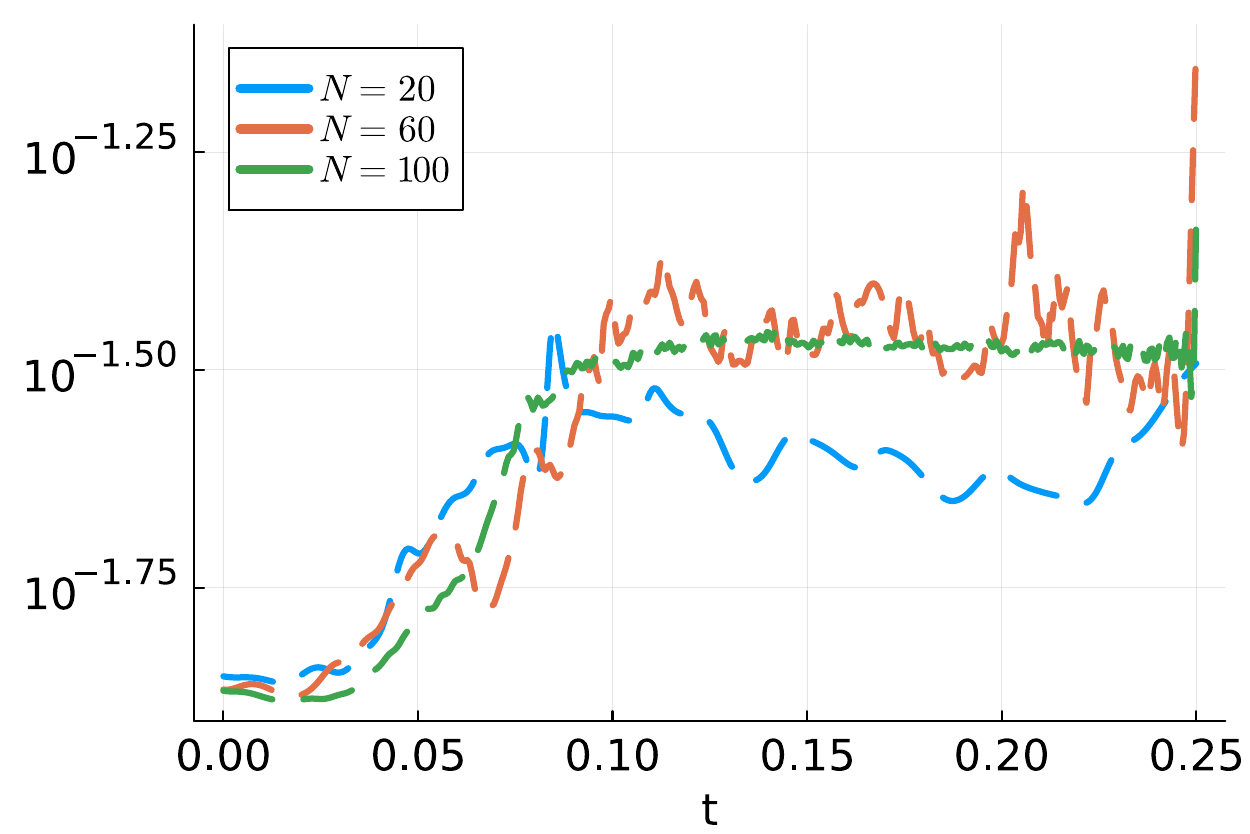}
\label{fig:DG2_entro_diss}}
\caption{Convective entropy and entropy dissipation for HR-ROMs.}
\end{figure}

\par
First, Fig.~\ref{fig:DG2_sval} displays singular values for models with and without entropy variable snapshots. We notice a marked slowdown in the decay of singular values with entropy enrichment. Subsequently, in Fig.~\ref{fig:DG2_proj_er}, we contrast the snapshot entropy projection error with 30 modes. The energy residuals $E_{30}$ for non-enriched and enriched snapshots are $9.58\times10^{-3}$ and $1.04\times 10^{-2}$, respectively. Entropy enrichment significantly reduces the projection error in every snapshot in this case, suggesting the necessity of enrichment even for 1D examples.

\par 
In Fig.~\ref{fig2.2}, we plot the relative error between FOM and ROM solutions and the number of hyper-reduced nodes for 20 to 120 modes as a function of the number of modes. In this example, the error in the hyper-reduced ROM solutions visibly deviates from the standard ROM solutions due to the more challenging problem setting. Nonetheless, the overall hyper-reduction process does not introduce excessively large errors in addition to ROM errors.

\par We then examine density plots in Fig.~\ref{fig2.3}. When $N=20$, the ROM exhibits oscillations around shocks, but the solution remains stable. Similar to previous example, when we increase the number of modes to $N=100$, the oscillations are significantly ameliorated. 

\par
Lastly, we present values of the convective entropy over simulation time using 20 modes in Fig.~\ref{fig:DG2_conv_entro} and compute the entropy dissipation from the viscosity with same modes in Fig.~\ref{fig:DG2_entro_diss}. We note that the convective entropy remains near zero, while entropy dissipation consistently stays positive, confirming the entropy conservation and viscous entropy dissipation.

\subsubsection{Courant–Friedrichs–Lewy condition}

The Courant–Friedrichs–Lewy (CFL) condition prescribes a maximum stable time-step when applying explicit time-stepping methods to systems of ODEs \cite{CFL}. It has been noted that projection-based linear ROMs can relax the CFL condition, allowing for an increased time step size in certain scenarios \cite{Marley15}. This modification is feasible because the ROM omits modes with small scale variations, which necessitate a small time-step to avoid instability. 

\par For all numerical experiments in this paper, we employ adaptive time stepping methods which automatically determine an appropriate explicit time-step size. We report the total number of time steps from previous 1D examples on \autoref{tb:CFL} to examine how ROMs influence the CFL condition. For consistency, we use a RK4 time-stepper for both the FOMs and ROMs. Although fewer time steps do not necessarily lead to faster runtimes, we report the minimum runtime for solving the ODEs over 20 trials in \autoref{tb:runtime}. In addition to the original examples, we include Example \ref{sec:1DEuler_weak} with a shorter simulation time of $T=0.2$, before the solution forms a shock.

\begin{table}
\begin{center}
\begin{tabular}{ | l | c | c | c | c | c | c |}
\hline
   Time steps& Example \ref{sec:1DEuler_weak} (1D Euler wall) & Example \ref{sec:sod} (Sod) \\ \hline
     FOM &  2381 &  1331  \\   \hline
   $N=20$ & 158  & 203  \\  \hline
  $N=40$ &  284 &  384\\   \hline
  $N=60$ & 386  &  682   \\   \hline
 $N=80$ &  471 &   505 \\   \hline
$N=100$ &  531 &   531 \\   \hline
\end{tabular}
\caption{Total number of time steps performed for FOMs and ROMs using adaptive RK4 time-stepping.}
\label{tb:CFL}
\end{center}
\end{table}

\begin{table}
\begin{center}
\begin{tabular}{ | l | c | c | c | c | c | c |}
\hline
   Runtime (ms)/ Error & Example \ref{sec:1DEuler_weak} ($T=.2$) & Example \ref{sec:1DEuler_weak} ($T=.75$) & Example \ref{sec:sod} \\ \hline
     FOM & 370 &  1410 & 769   \\   \hline
   $N=10$ & 3/ 4.42e-3 & 92/ 6.47e-2  &   7/ 1.43e-1\\  \hline
   $N=20$ & 26/ 3.59e-4 & 64/ 1.96e-2 &  43/ 8.80e-2\\  \hline
   $N=30$ & 94/ 2.62e-5 & 240/ 1.10e-2 &  182/ 6.41e-2\\  \hline
  $N=40$ & 382/ 5.82e-6 &  515/ 6.85e-3 & 391/ 3.03e-2\\   \hline
\end{tabular}
\caption{Online runtime (rounded to closest ms) and solution error using adaptive RK4 time-stepping.}
\label{tb:runtime}
\end{center}
\end{table}

We observe that the total number of time steps positively correlates with the number of modes chosen, yet it remains considerably smaller than that of the FOMs, suggesting that the CFL condition is relaxed for all ROMs considered in this work. We also observe consistently faster runtime with a reasonable number of modes for solutions both with and without shocks, despite the mesh size being much larger than $\epsilon$ (which corresponds to the under-resolved FOM regime).

We emphasize that the CFL condition relaxations observed in this work are based solely on numerical evidence. A theoretical analysis of CFL relaxation in ROMs has been performed for incompressible flows in  \cite{planariu2025}.

\subsection{2D experiments for DG ROMs}
\subsubsection{Kelvin-Helmholtz instability}
\label{sec:KH}
We begin our 2D experiments by examining the Kelvin-Helmholtz instability \cite{Miles59}, which is posed on a periodic square domain $\Go = [-1,1]^2$. We use a smoothed initial condition derived from \cite{Munz89}:
\begin{alignat*}{2}
        &\rho = 1+\f{1}{1+e^{-(y+1/2)/\sigma^2}}-\f{1}{1+e^{-(y-1/2)/\sigma^2}}\csp \\ & u_1 = \f{1}{1+e^{-(y+1/2)/\sigma^2}}-\f{1}{1+e^{-(y-1/2)/\sigma^2}}-\f{1}{2},\\
        &u_2 = \alpha\sin(2\pi x)(e^{-(y+1/2)^2/\sigma^2}-e^{-(y-1/2)^2/\sigma^2})\csp  && p = 2.5.
\end{alignat*}
In the following experiments, we set $\alpha=\sigma=0.1$ \cite{Chan20ROM}. For the DG FOM, we utilize $32\times32$ elements with a polynomial degree of 4. The artificial viscosity is set to $1\times10^{-3}$. We run until final time $T = 3.0$. 
 
\begin{figure}
\centering
\noindent
\subfloat[(a) Singular values of snapshots]{\includegraphics[width=.49\textwidth]{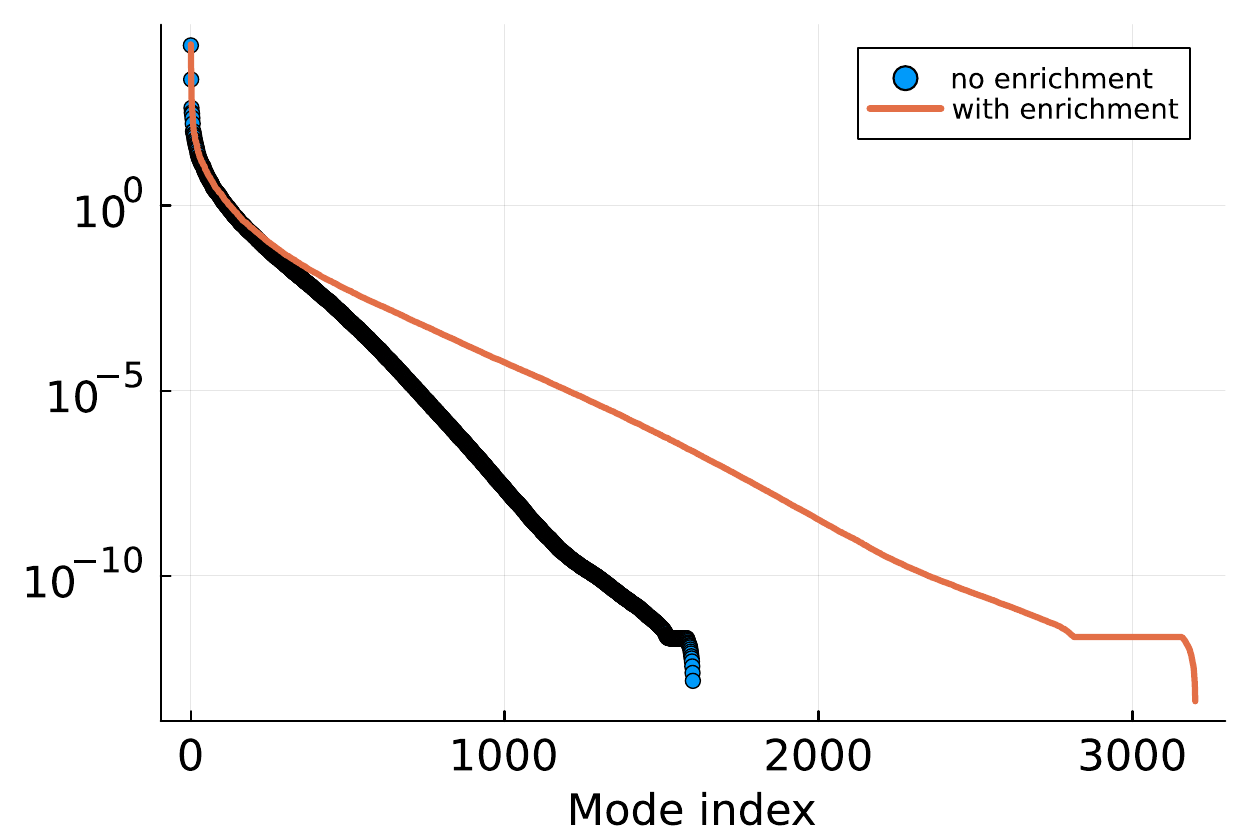}
\label{fig:DG3_sval}}
\subfloat[(b) Entropy projection error ($N=30$)]{\includegraphics[width=.49\textwidth]{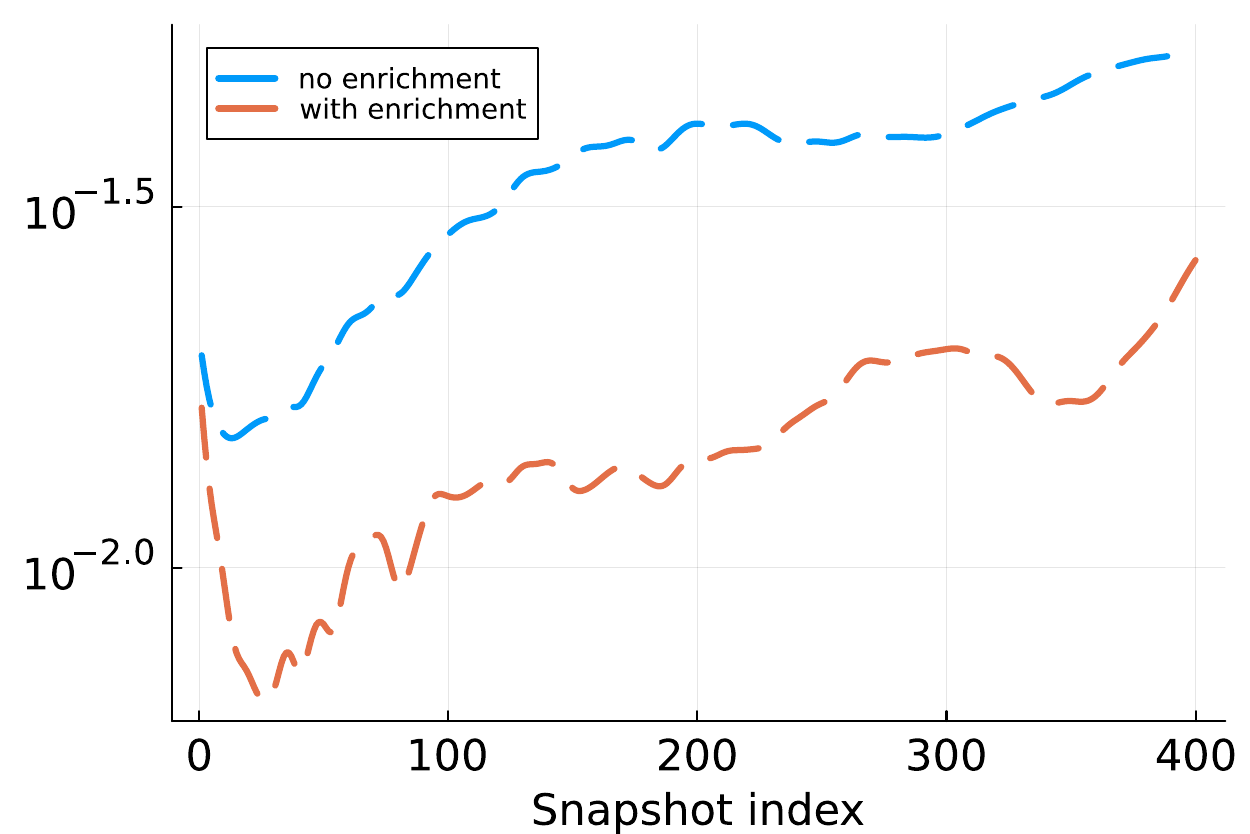}
\label{fig:DG3_proj_er}}

\caption{Singular values and entropy projection error with and without entropy enrichment.
}
\end{figure}

\begin{figure}
\centering
\noindent
\subfloat[(a) HR-ROM error]{\includegraphics[width=.49\textwidth]{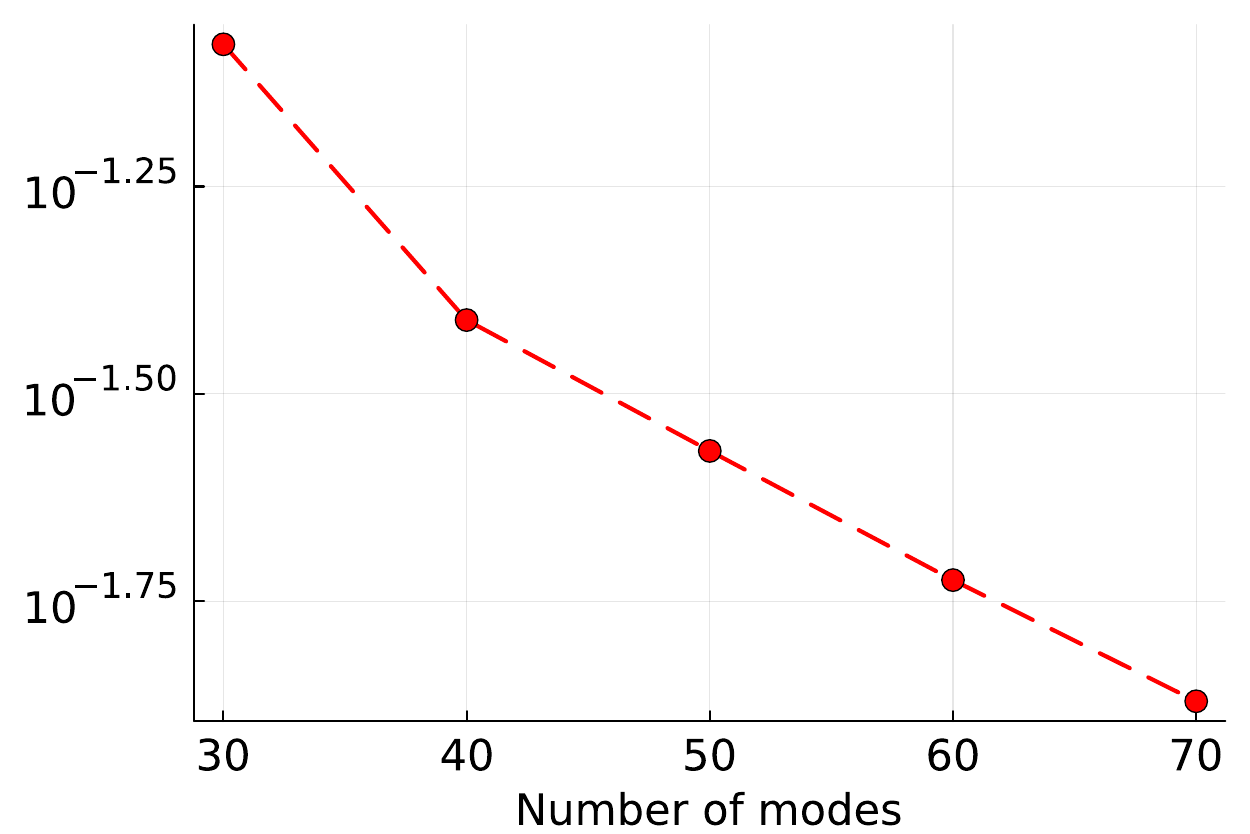}}
\subfloat[(b) Number of hyper-reduced nodes]{\includegraphics[width=.49\textwidth]{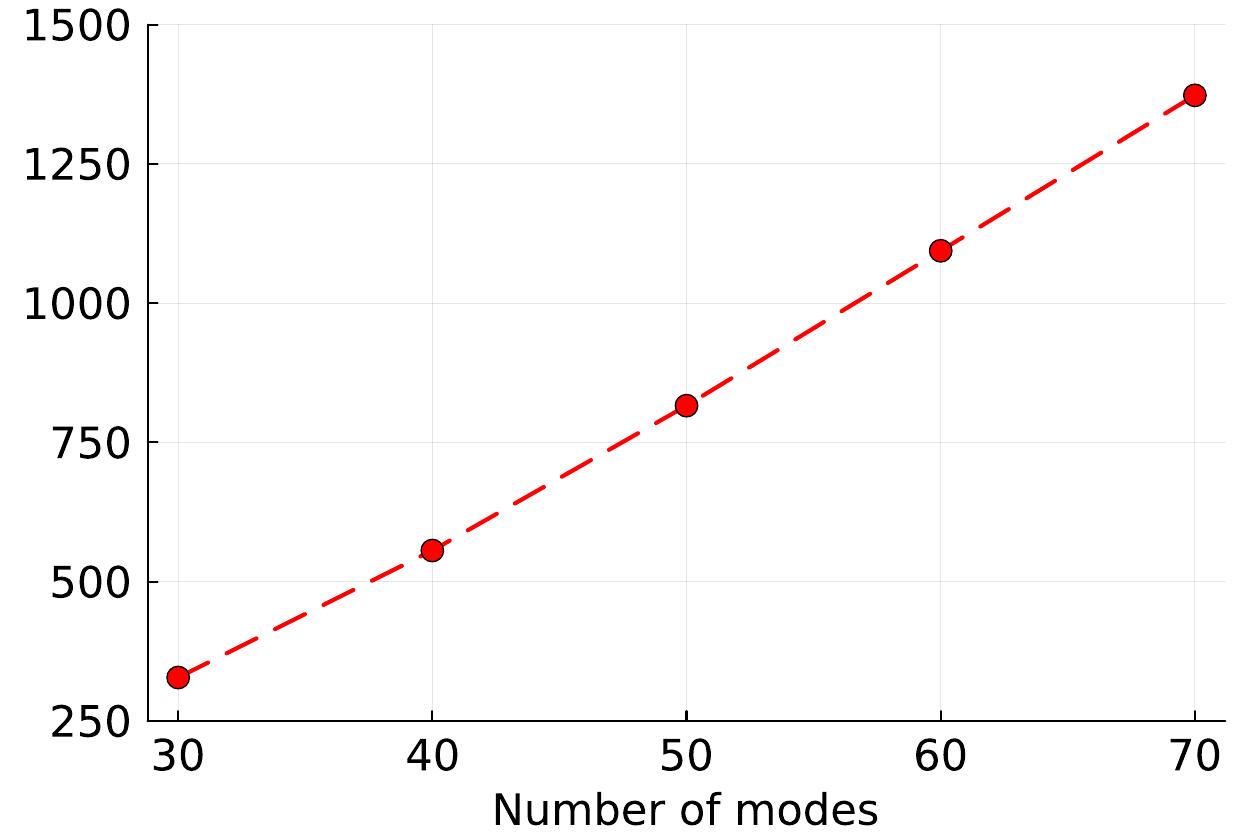}}
\caption{HR-ROM performance (error and number of HR nodes).}
\label{fig3.2} 
\end{figure}

\begin{figure}
\centering
\noindent
\subfloat[(a) FOM]{\includegraphics[width=.33\textwidth]{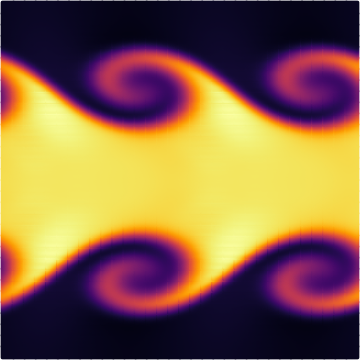}}
\subfloat[(b) $N=30$]{\includegraphics[width=.33\textwidth]{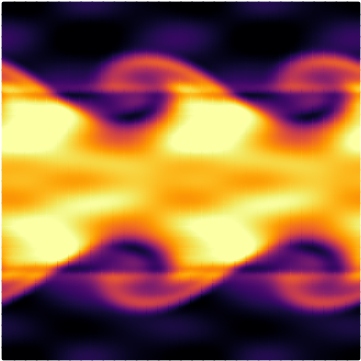}}
\subfloat[(c) $N=70$]{\includegraphics[width=.33\textwidth]{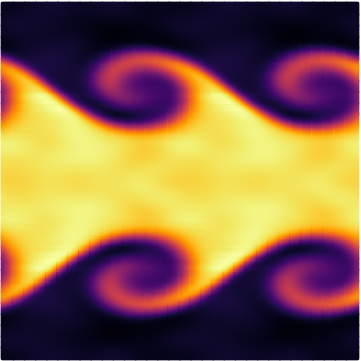}}
\caption{Density $\rho$ plots of Kelvin-Helmholtz instability.
}
\label{fig3.3}
\end{figure}

\begin{figure}
\centering
\noindent
\subfloat[(a) Convective entropy for $N=20$ HR-ROM]{\includegraphics[width=.49\textwidth]{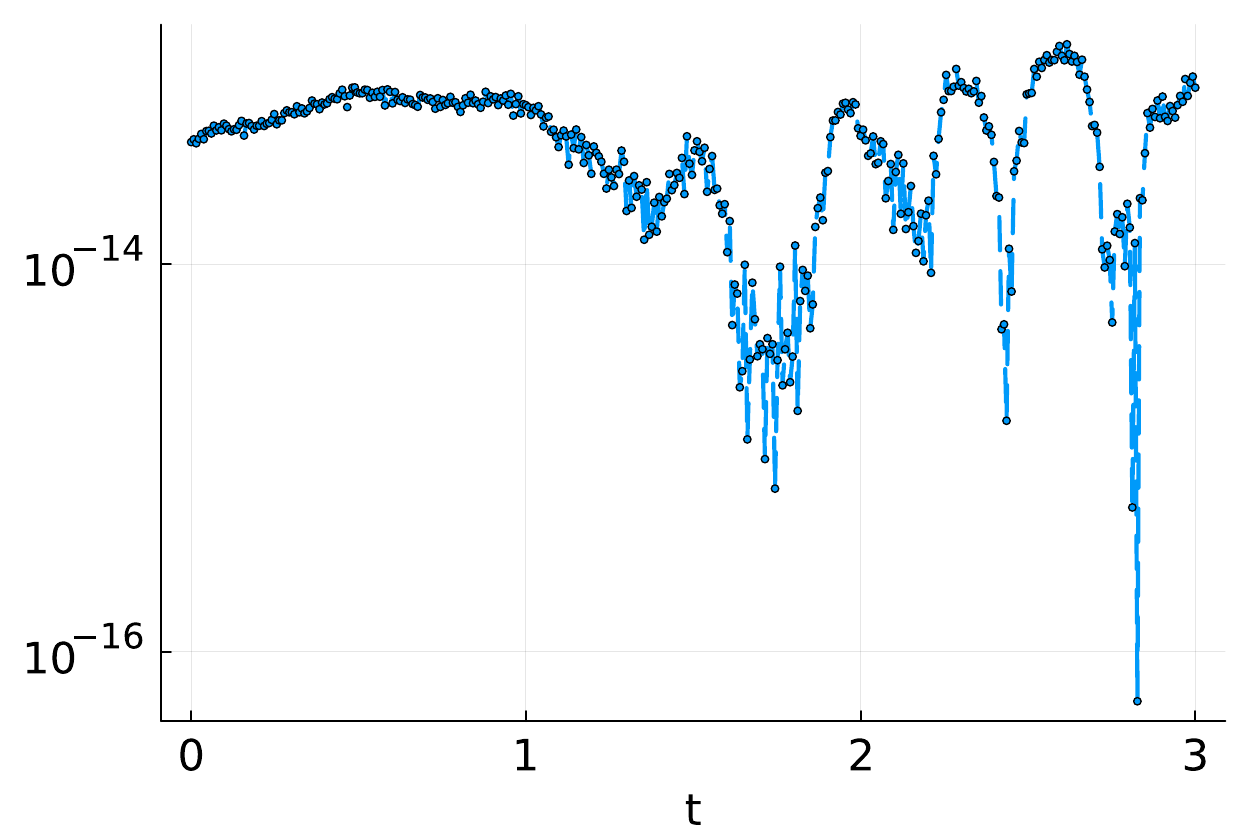}
\label{fig:DG3_conv_entro}}
\subfloat[(b) Entropy dissipation from viscosity for HR-ROMs]{\includegraphics[width=.49\textwidth]{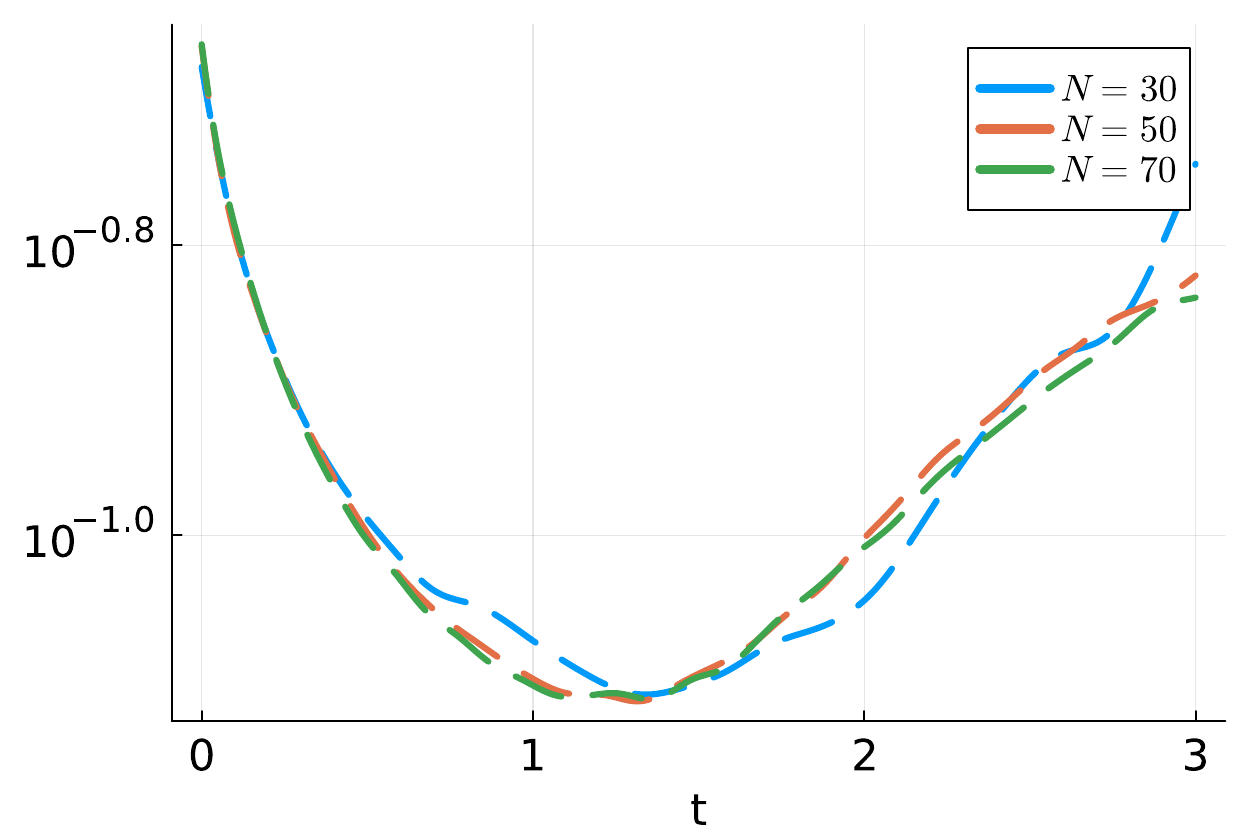}
\label{fig:DG3_entro_diss}}
\caption{Convective entropy and entropy dissipation for HR-ROMs.}
\end{figure}

\par As in the 1D case, we begin our analysis by evaluating the effect of entropy variable enrichment. In Fig.~\ref{fig:DG3_sval}, singular values of snapshot matrices are plotted, comparing cases with and without entropy variable enrichment. In Fig.~\ref{fig:DG3_proj_er}, we proceed to assess the relative approximation error under the entropy projection using 30 modes with and without entropy enrichment. While entropy enrichment results in a slower decay of singular values, the energy residuals $E_{30}$ are close ($2.69\times 10^{-3}$ without enrichment and $3.00\times 10^{-3}$ with enrichment). The results indicate that the first 30 modes with enrichment consistently result in smaller approximation errors when approximating each solution snapshot by the entropy projection.

\par 
Following this, Fig.~\ref{fig3.2} illustrates the relative $L^2$ error and the number of nodes used after hyper-reduction as the number of modes increases. 

\par Next, we examine the ROM solution at the final simulation time $T=3.0$.  In Fig.~\ref{fig3.3}, the solution displays recognizable patterns at $N=30$, with resolution around Kelvin-Helmholtz instabilities improving as $N$ increases to 70. We note that the FOM uses only artificial viscosity for stabilization, which tends to oversmooth the solution. Incorporating interface dissipation in both the FOM and hyper-reduced ROM would allow for smaller artificial viscosities and is part of our ongoing work. Again, no instability is observed in any of these ROM solutions.

\par Like previous examples, Fig.~\ref{fig:DG3_conv_entro} presents the computation of convective entropy for 20 modes. For periodic boundary conditions on high-dimensional domains, the convective entropy is 
\begin{equation*}
\LRb{\bm{v}_N^T\sumid\LRp{2\widebar{\bm{V}}_N^T\LRp{\widebar{\bm{Q}}^i\circ\bm{F}^i}\bm{1}}}.
\end{equation*}
The magnitude of the convective entropy in this example stays close to machine precision. Additionally, in Fig.~\ref{fig:DG3_entro_diss}, entropy dissipation due to viscous terms remains positive across varying mode counts.

Lastly, we compare the runtime and error in \autoref{tb:runtime_2D} using adaptive RK4. Again, we observe a significant runtime improvement for small numbers of modes. As the number of modes increases, the ROM error decreases, but the runtime correspondingly grows. We also note that the 2D FOM code is implemented using element-wise operations, which yields significantly faster runtimes compared to a global implementation.

\begin{table}
\begin{center}
\begin{tabular}{ | l | c | c | c | c | c | c |}
\hline
   Runtime (s)/ Error& Example \ref{sec:KH} ($T=3.0$) \\ \hline
     FOM & 36.44  \\   \hline
   $N=30$ & 2.24 / 5.92e-2 \\  \hline
   $N=50$ & 19.81/ 2.69e-2 \\  \hline
\end{tabular}
\caption{Online runtime (in seconds) and solution error using adaptive RK4 time-stepping.}
\label{tb:runtime_2D}
\end{center}
\end{table}

\subsubsection{Gaussian in 2D reflective wall boundary conditions}
We present a final experiment to illustrate how the hyper-reduction of boundary terms works with weakly-imposed boundary condition. We use a domain $\Go = [-1,1]^2$ and impose reflective wall boundary conditions, such that $\bm{f}^* = \bm{f}_{EC}(\bm{u}^+,\bm{u})$ at the boundary nodes. Here, $\bm{u}^+$ is defined as the ``mirror state'' 
$$\rho^+=\rho\csp u_1^+=-u_1\csp u_2^+=-u_2\csp p^+ = p.$$
We put a Gaussian pulse near left bottom corner of the domain with the following initial conditions
$$\rho = 1+0.5e^{-25((x+0.5)^2+(y+0.5)^2)}\csp u_1=u_2=0 \csp p=\rho^\gamma.$$

\begin{figure}
\centering
\noindent
\subfloat[(a) FOM]{\includegraphics[width=.33\textwidth]{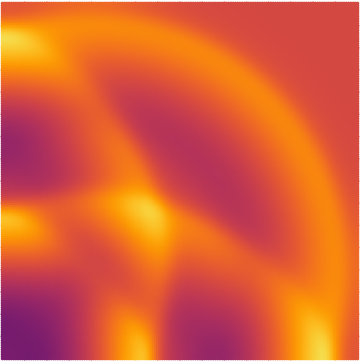}}
\subfloat[(b) HR-ROM ($N=30$)]{\includegraphics[width=.33\textwidth]{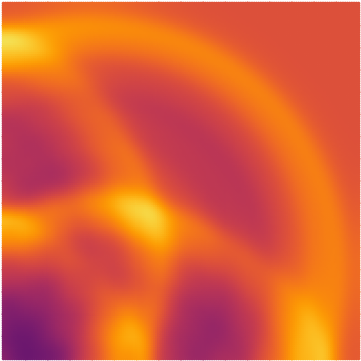}}
\subfloat[(c) HR nodes]{\includegraphics[width=.33\textwidth]{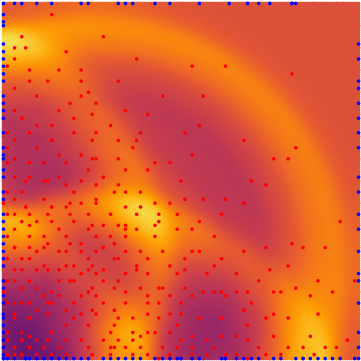}}
\caption{Density $\rho$ plots of 2D reflective wall simulation. Red and blue dots represent hyper-reduced volume and boundary nodes respectively.
}
\label{fig4.1}
\end{figure}

\begin{figure}
\centering
\noindent
\subfloat[(a) Convective entropy for $N=30$ HR-ROM]{\includegraphics[width=.49\textwidth]{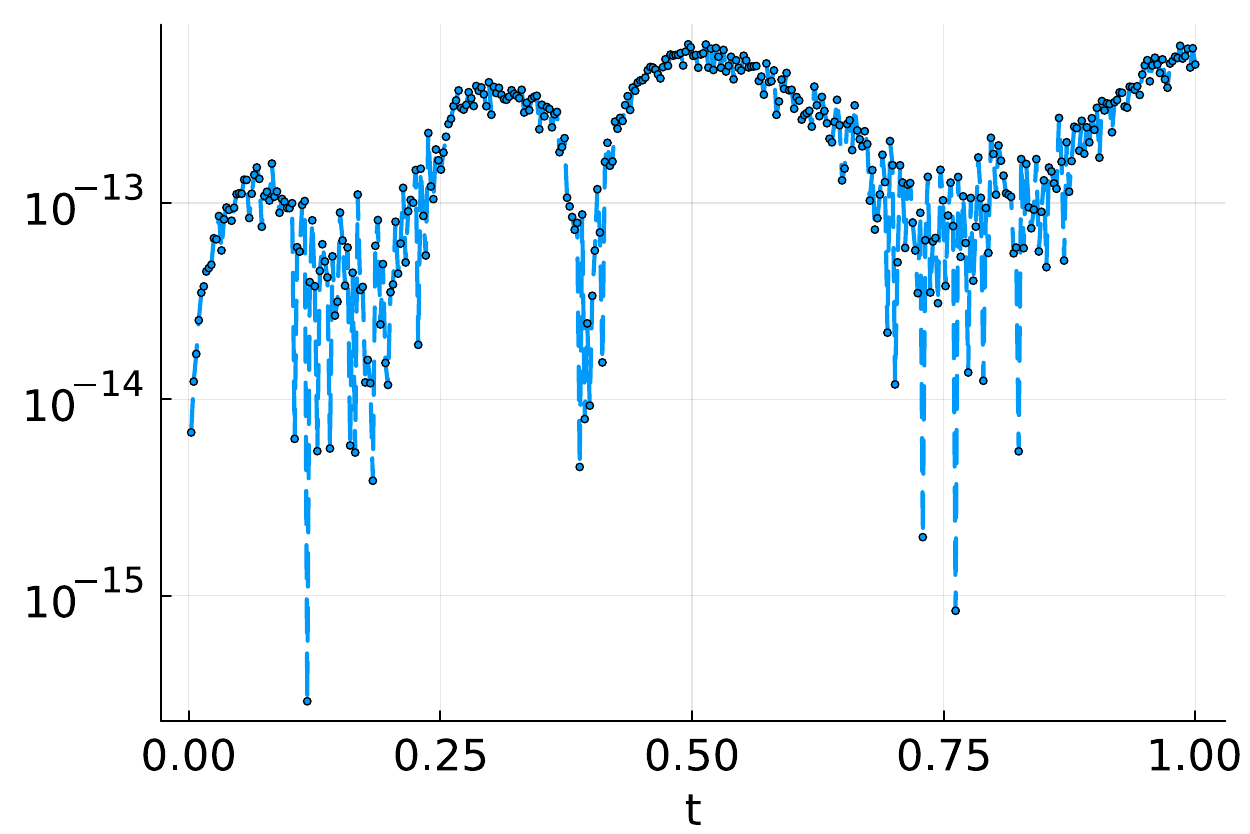}
\label{fig:DG4_conv_entro}}
\subfloat[(b) Entropy dissipation from viscosity for $N=30$ HR-ROM]{\includegraphics[width=.49\textwidth]{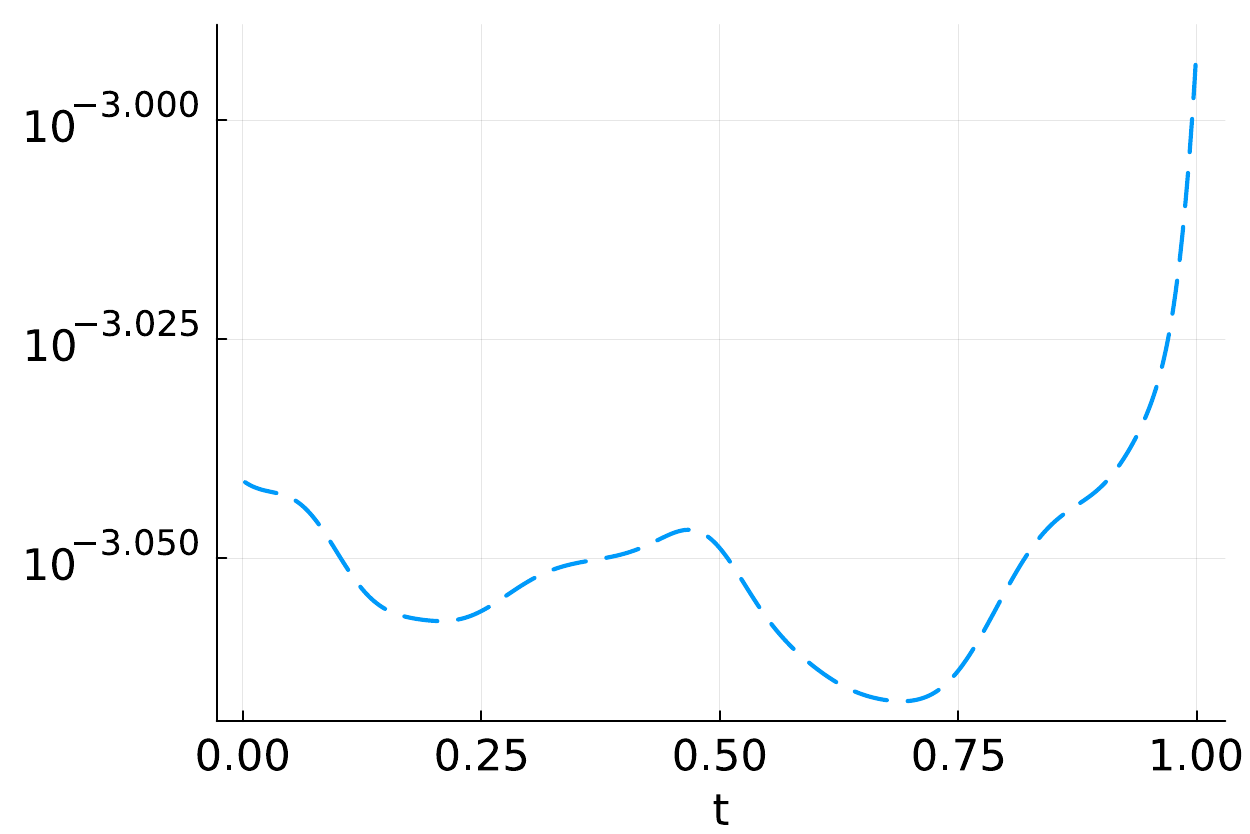}
\label{fig:DG4_entro_diss}}
\caption{Convective entropy and entropy dissipation for $N=30$ HR-ROM.}
\end{figure}
For the FOM, we utilize $16 \times 16$ elements with an interpolation degree of 4. The artificial viscosity is set to $\epsilon = 1\times10^{-3}$. We simulate the model until final time $T = 1.0$. 

\par In Fig.~\ref{fig4.1}, we plot FOM and the hyper-reduced volume and boundary nodes for 30 modes. In Fig.~\ref{fig:DG4_conv_entro}, we plot the convective entropy contribution, which is 
\begin{equation*}
   \LRb{\bm{v}_N^T\LRp{\sumid\LRp{\widebar{\bm{V}}_h^T\LRp{\LRp{\widebar{\bm{Q}}_h^i-\widebar{\bm{Q}}_h^{i,T}}\circ\bm{F}^i}\bm{1} + \widebar{\bm{V}}_{b}^T\widebar{\bm{B}}_b^i\bm{f}_b^{i,*}}}}
\end{equation*} with the variables defined in \eqref{eq:DG_ROM_HD_HR_weakBC1} for high-dimensional domains with weakly-imposed boundary conditions. We also plot the entropy dissipation from viscosity in Fig.~\ref{fig:DG4_entro_diss}. We observe that our simulation indeed satisfies the semi-discrete entropy conservation for convective entropy and entropy dissipation with viscosity. The ROM solution remains stable and approximates FOM solution well, even with a small number of modes and hyper-reduced nodes for both volume and boundary flux evaluations.

\section{Conclusion and future work}
\label{sec:conclusion}
\subsection{Conclusion}

\par In this work, we have successfully generalized the construction of entropy stable reduced order models for nonlinear conservation laws from finite volume-based FOMs to high order DG-based FOMs. We also introduce new hyper-reduction techniques: a weighted test basis for two-step hyper-reduction and Carath\'eodory pruning for the hyper-reduction of boundary terms. 

\par In extending from FVM to DG, we introduced a generalized construction of the ``test basis'' which accounted for mass matrices which are not scalar multiples of the identity. We prove that, for ``ideal'' hyper-reduction, the approximation error for the hyper-reduced differentiation matrix is orthogonal to the ROM approximation space under this modification of the test basis. This significantly improves the accuracy of the hyper-reduced ROM for higher order DG FOMs. This generalization also addresses the case of variable grid spacings for FVM FOMs. 

\par To preserve entropy stability during hyper-reduction of boundary terms, we then implement Carath\'eodory pruning based solely on numerical linear algebra. Unlike previous methods that rely on formulating a linear programming problem, this approach eliminates the need for sparsity-promoting LP solvers and provides a predictable number of hyper-reduced boundary nodes.

\par To validate the effectiveness of the proposed methods, we compared FVM and DG schemes on both 1D and 2D equations to observe their differences. We also tested both periodic and weakly-imposed boundary conditions. In all these tests, the convective entropy contribution consistently hovered around machine precision, highlighting the robustness of new entropy stable DG ROMs. We also observe numerically that our discretization of the viscous terms dissipates entropy.

\subsection{Future work}
Compared to standard ROMs, the main distinguishing features of our entropy-stable ROM are the use of flux differencing in place of conventional matrix-vector multiplication, and the application of an entropy projection to enforce entropy stability. However, a rigorous error analysis for entropy-stable ROMs remains an open problem and is left for future work.
\par 
In this work, we solely rely on a simple artificial viscosity to stabilize shocks in ROMs. However, interface flux dissipation in FVMs or DG methods can be another useful tool for reducing spurious oscillations. Yu and Hesthaven have shown the effectiveness of upwinding dissipation using discrete empirical interpolation methods (DEIM) to enhance model accuracy \cite{Yu22}; however, the discrete entropy stability of this approach remains to be proven. 

\subsubsection{Addressing ROM accuracy for transport-dominated problems}

While this work focuses on guaranteeing stability for a projection-based ROM, it does not address accuracy issues. A well-known challenge for projection-based ROMs is the accuracy of the POD approximation space for transport-dominated problems, which is related to the slow decay of the Kolmogorov $n$-width \cite{van2024modeling}.

A popular approach to address this issue is the use of nonlinear manifold approximation spaces. While this work is restricted to linear manifolds, we highlight the work of Klein et al.\, who extended a similar flux differencing entropy stable ROM formulation to nonlinear approximation spaces \cite{KLEIN2025}. In particular, Klein and Sanderse constructed entropy stable ROMs by exploiting projections onto the tangent space, and demonstrated that nonlinear approximation spaces improve accuracy for solutions with shocks or turbulent structures compared with POD-based linear reduced spaces. While Klein and Sanderse consider a finite volume discretization, their approach would be straightforward to combine with the nodal DG formulations in this work.

To the best of our knowledge, the nonlinear reduced spaces introduced in \cite{KLEIN2025} have not yet been combined with hyper-reduction techniques in an entropy stable fashion, and would constitute a promising future direction.

 Another promising avenue for future research lies in the integration of domain decomposition (DD) techniques into the ROM framework \cite{Xiao19, Iyengar22, Hoang21, Diaz24}, where the computational domain is partitioned into smaller subdomains. In addition to mollifying the slow decay of the Kolmogorov $n$-width, we expect DD approaches to increasing the sparsity of hyper-reduced differentiation matrices and reduce the total number of two-point entropy conservative flux evaluations. These two-point flux evaluations dominate the computational cost of entropy stable ROMs \cite{Chan20ROM}.

 

\section*{Acknowledgements}
Ray Qu gratefully acknowledges support from National Science Foundation under awards DMS-1943186. Jesse Chan gratefully acknowledges support from National Science Foundation under awards DMS-1943186 and DMS-223148. Akil Narayan gratefully acknowledges support from AFOSR FA9550-23-1-0749.

\appendix 
\section{Burgers' equation with small viscosity}

\begin{figure}[h]
    \centering
    \subfloat[(a) $N=30$ solutions w/o interface dissipation]{\includegraphics[width=.49\textwidth]{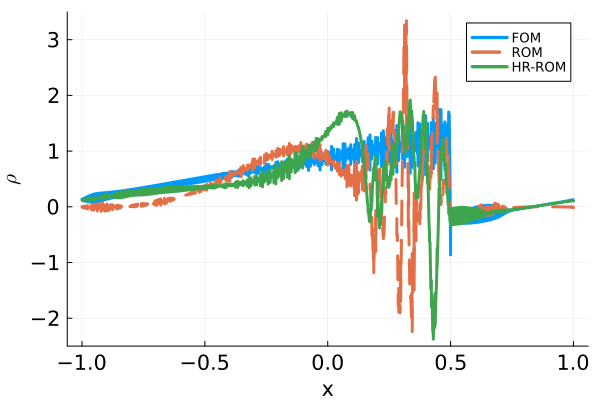}}
\subfloat[(b) $N=30$ solutions w/ interface dissipation]{\includegraphics[width=.49\textwidth]{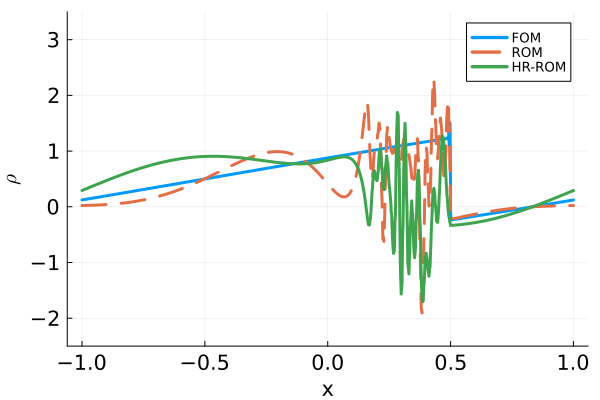}}
\caption{Example \ref{sec:burgers} with artificial viscosity coefficient $\epsilon=1\times10^{-4}$}
\end{figure}

\bibliographystyle{elsarticle-num}
\bibliography{reference.bib}

\end{document}